\newcommand{\rmd}{\mathrm{d}}
\newcommand{\argmax}{\operatornamewithlimits{argmax}} 
\newcommand{\argmin}{\operatornamewithlimits{argmin}} 
\newcounter{hypH}
\newenvironment{hypH}{\refstepcounter{hypH}\begin{itemize}
\item[{\bf A\arabic{hypH}}]}{\end{itemize}}
\newtheorem{theorem}{Theorem}[section]
\newtheorem{remark}[theorem]{Remark}
\newtheorem{proposition}[theorem]{Proposition}
\newtheorem{lemma}[theorem]{Lemma}
\title{Adaptive clustering by minimization of the mixing entropy criterion}
\author{ Thierry Dumont \thanks{This research has been conducted as part of the project Labex MME-DII (ANR11-LBX-0023-01).} \\
		MODAL'X, 
		UMR 9023, 
		UPL, 
		Univ. Paris-Nanterre, 
		F92000 Nanterre France\\
	\texttt{thierry.dumont@parisnanterre.fr} \\
}
\begin{document}
\maketitle

\begin{abstract}
We present a clustering method and provide a theoretical analysis and an explanation to a phenomenon encountered in the applied statistical literature since the 1990's. This phenomenon is the natural adaptability of the order when using a clustering method derived from the famous EM algorithm. We define a new statistic, the relative entropic order, that represents the number of clumps in the target distribution. We prove in particular that the empirical version of this relative entropic order is consistent. Our approach is easy to implement and has a high potential of applications. Perspectives of this works are algorithmic and theoretical, with possible natural extensions to various cases such as dependent or multidimensional data.
\end{abstract}
\keywords{Entropy, Clustering, Mixture models, Order selection, EM algorithm, Expectation Maximization}

\section{Introduction} 
The present study follows on from the literature on model-based clustering. This research field in applied and theoretical statistics is very active since the 1990's (\citet{bryant:1991,celeux:1992,biernacki:2000,baudry:2012,celisse:2012,quost:2016,spurek:2017}). In the context of statistical data modeling using mixture distributions of some independent and identically distributed (i.i.d.) sample $(Z_1,\ldots,Z_n)$ with common probability distribution $P^\star$, model-based clustering pursue the three main objectives that are, 1/ Parameter inference when adjusting the data by a product measure $\prod_{k=1}^n \left(\sum_{x=1}^r \nu(x) g_{\theta_x}\left( Z_k\right) \right)$, 2/ Estimation of the mixture order $r$, 3/ Data clustering by computing, for instance, the maximum a posteriori estimators (MAP) $\widehat{X}_k = \argmax_x  \nu(x) g_{\theta_x}(Z_k)$.  While parameter inference is in general dealt with Expectation Maximization (EM) or gradient descent like algorithms (\citet{dempster:laird:rubin:1977}, \citet{baum:1970}), the order estimation is in general carried out using the model selection approach (\citet{akaike:1973},  \citet{mallows:1973}, \citet{massart:2007}) or using the famous Integrated Completed Likelihood method (\citet{biernacki:2000}) that performs the three tasks simultaneously. 

In this paper we present a clustering method as well as theoretical foundations that explain the behavior observed in some practical uses of a predecessor of the ICL: the Classification EM algorithm (\citet{celeux:1992},\citet{biernacki:1997}). We present a pure entropic based criterion that applies on any non parametric mixture decomposition $\sum_{x=1}^r \nu(x) G_x$  of $P^\star$ or of the empirical distribution $P^n$. It is made of the sum of two entropic terms:  the Shannon's entropy of $\nu$ and the weighted sum of the cross entropy of the $G_x$'s over a parametric probability density family $\{g_{\theta} ,\ \theta \in \Theta\}$ chosen beforehand.

The purpose of statistic inference is  to extract information from the data. Therefore, strong links exist between information theory (\citet{shannon:1948}) and statistics (see \citet{gassiat:2018}). In particular the maximum likelihood estimator (MLE) may also be seen as the minimum cross entropy estimator over a parametric family of models, that is the model that extracts the biggest quantity of information from the data. The entropy is a notion introduced by Claude Shannon for the information theory in his seminal work \citet{shannon:1948}. It measures how clumped up the probability measures are. Clumped up probability measures concentrate the total mass on a few zones. They are the most informative measures. On the contrary, spread out measures, meaning the measure with high entropy, present the most randomness and are the least informative. 

Our mixing entropy criterion, that we call the \textit{mixing entropy criterion}, realizes a compromise between the information contained in $\nu$, that favors the probabilities $\nu$  concentrated on a few $x$'s,  and the weighted sum of the cross entropy $G_x$'s over $\{g_{\theta} ,\ \theta \in \Theta\} $, that favors sharp mixture decomposition with spread out probabilities $\nu$'s. We show that this compromise leads to a natural decomposition of the distribution and  this decomposition is consistent.

We also show that our method  realizes a natural selection of the number of clusters $r$ and therefore prove the observation made in particular  by   \citet{biernacki:1997,spurek:2017} on numerical experiments. The classical model selection approach proceeds by penalization of a criterion by a term that reflects the model dimension, or its complexity in some sense, and it often relies on a manual calibration of this penalty using, for instance, the slope heuristic method (see \citet{baudry:2012}). On the opposite, classification by minimization of the  mixing entropy criterion selects a number of clusters without external calibration methods.  We prove that this number, that we call the \textit{relative entropic order}, is a statistic of the target distribution that is consistently estimated using its empirical version. This order represents the number of clumps in the distribution and the counting method is only relative to the chosen family of densities $\{g_{\theta} ,\ \theta \in \Theta\}$.

Section \ref{entrop:sec:entropic:approach} is the general section where the mixing entropy criterion is presented and where we prove that the minimum mixing entropy estimator converges when the order $r$ is kept fixed.  Section \ref{entrop:sec:discussion:Dr} is devoted to a discussion on the nature of the possible limits of this estimator. It is a transitional section where applications to the Gaussian and the binary settings are detailed. This section prepares the definition of the relative entropic order defined in Section \ref{entrop:sec:entropic:order}. Consistency of the empirical relative entropic order is proven in the same section. In Section \ref{entrop:sec:simi:mixture:models} we observe that the mixing entropy criterion is a quantity that notably appears when dealing with classical mixture models. We also show in this section that the CEM algorithm creates a sequence of decompositions   with non-increasing mixing entropy. Section \ref{entrop:sec:practical:implement} leans on the preceding observations to build an algorithm that we use in Section \ref{entrop:sec:numerical} to illustrate, on synthetic data, the results of this paper. 
Note that if the distribution $P^\star$ is itself a mixture distribution of order $r$, then the  relative entropic order may not be equal to $r$. both orders do not measure the same quantity.  We  illustrate this phenomena on the numerical experiments of Section \ref{entrop:sec:numerical}.  Finally, some detailed proofs are gathered in Section \ref{entrop:sec:proofs} and in the Supplementary material \citet{dumont:supp:2022}.

\section{Main setting and mixing entropy}
\label{entrop:sec:entropic:approach}
Throughout the paper we consider a probability space $(\Omega,\mathcal{F},\mathbb{P})$. Let $\mathbb{Z}$ be a topological space equipped with its Borel $\sigma$-algebra. We also consider some non-negative reference measure $\lambda$ on $\mathbb{Z}$.
\subsection{Basic definitions and general assumptions}
\label{entrop:sec:main}
Let $P$ be a probability distribution on $\mathbb{Z}$. 
If  $P$ is  relatively continuous with respect to $\lambda$: $\mathrm{d}P( z) = p(z)\rm d\lambda\left(  z\right)$, the Shannon's entropy of $P$  is defined as
\begin{equation*}
H(P) := - \int p(z)\log p(z) \rm d \lambda(z)\;.
\end{equation*}
The cross entropy  between $P$  and a function $g$ on $\mathbb{Z}$, positive $P$-almost surely (\textit{a.s.}) and $\log$-integrable  is:  
\begin{equation*}
H(P\ ||\ g )  := - \mathbb{E}_P\left(\log  g\right) := -\int \log g(z) \mathrm{d}P\left( z\right)\;.
\end{equation*} 
In the case where $P$ is  relatively continuous with respect to $\lambda$, and if $g$ is a probability density with respect with the same measure $\lambda$, then $H(P\ ||\ g ) $ satisfies  
$$H(P\ ||\ g )  = -\int p(z)\log(g(z)) \mathrm{d}\lambda\left( z\right)\;,$$ 
and $KL(P\ || g) := H(P\ ||\ g ) -H(P)$  is known as the \textit{Kullback-Leibler} divergence, also called the relative entropy,  between $p$ and $g$ (see \citet{kullback:1997}).

In the context of inference, it is common knowledge that a nice interpretation of the classical maximum likelihood estimator (MLE) is to see the estimator as a minimizer of the relative entropy (or the Kullback-Leibler divergence):

Let $ \left( Z_1,\ldots,Z_n\right)$ be a vector of independent and identically distributed (\textit{i.i.d.}) variables on $\mathbb{Z}$ with common distribution $P^\star$,  $\left\{g_\theta \;,\; \theta \in \Theta\right\}$ be a family of densities on $\mathbb{Z}$ with respect with $\lambda$  and  $\ell$ be the $\log$-likelihood function, defined, for all $\theta$ in $\Theta$ by: 

$$\ell(\theta) = \frac{1}{n} \sum_ {k=1}^n \log(g_\theta(Z_k))  \;. $$
Denoting by $P^n$ the empirical distribution of  $ \left( Z_1,\ldots,Z_n\right)$, then $\ell(\theta) = - H(P^n\ ||\ g_\theta )$ and a maximizer $\widehat{\theta}_n$ of $\ell$, if it exists, is also a minimizer of the cross entropy and therefore of the Kullback-Leibler divergence $\theta \mapsto KL(P^n\ ||\ g_\theta )$.
If the underlying distribution satisfies $\rm dP_\star(z) = g_{\theta^\star}(z) \rm d \lambda
(z)$, for some $\theta^\star$ in $\Theta$,  then $\min_{\theta} KL(P^\star\ ||\ g_{\theta} )= KL(P^\star\ ||\ g_{\theta^\star} ) = 0$.
Moreover, the law of large numbers insures that $H(P^n\ ||\ g_\theta )$ converges, as $n$ grows to $\infty$, towards $H(P^\star\ ||\ g_\theta ) $. These arguments,  together with continuity, compacity and identifiabily assumptions on the model, lead to the consistency of the MLE in a large variety of frameworks.  

We now embrace this entropic point of view and build a mixing version of the criterion.
Let $r$ be a positive integer. Denote by $\mathbb{X}$ the set $\{1,\ldots,r\}$, and by $\mathcal{M}_1\left( \mathbb{X}\right)$ the set of probability vectors $\nu =( \nu(1),\ldots,\nu(r))$ satisfying, for all $x$ in $\mathbb{X}$, $\nu(x) \ge 0$ and $\sum_{x=1}^r\nu(x)=1$. In the sequel, we indifferently use the notation $H$ for the entropy of a density in $\mathbb{X}$ or in $\mathbb{Z}$. Therefore, for any $\nu \in \mathcal{M}_1\left( \mathbb{X}\right)$,  $H(\nu) = \sum_{x=1}^r \nu(x) \log(\nu(x)) $. Note that we use the classical convention $0\log(0)=0$. We also denote by $\mathcal{M}_1\left( \mathbb{Z}\right)$ the set of all probability distributions in $\mathbb{Z}$. 

Let $\Theta$ be a parameter set and  $\{g_\theta\;,\;\theta \in \Theta\}$ be a family of probability density functions  relatively to a non negative reference measure $\lambda$ on $\mathbb{Z}$. In this section and in Section  \ref{entrop:sec:entropic:order}, we will consider the following assumptions on $\mathbb{Z}$, $\Theta$ and $\{g_\theta\;,\;\theta \in \Theta\}$:

\begin{hypH}
\label{entrop:hyp:theta:compact}
 $\Theta$ is a non empty compact topological space. 
\end{hypH}

\begin{hypH}
\label{entrop:hyp:density:bounded}
There exists a constant $C>1$ such that, for all $\theta \in \Theta$, $g_\theta$ is continuous and, for all $z$ in $\mathbb{Z}$, $1/C\le  g_\theta(z) \le C$ 
\end{hypH}

We denote by $\mathcal{C}_{b,C}\left(\mathbb{Z} \right)$ the set of all continuous upper bounded by $C$ and lower bounded by $1/C$, and we equip this space with the topology of the uniform convergence. 

\begin{hypH}
\label{entrop:hyp:density:continuous}
The application $\theta \mapsto g_\theta$ from $\Theta$ to  $\mathcal{C}_{b,C}\left(\mathbb{Z} \right)$ is continuous.
\end{hypH}
Finally we make the following assumption on the observation space $\mathbb{Z}$. 
\begin{hypH}
\label{entrop:hyp:Z:compact}
$\mathbb{Z}$ is a compact metric space.
\end{hypH}

\begin{remark}
\label{entrop:rem:gaussian:setting}
\begin{enumerate}
\item In the paper we will illustrate our results by considering $\mathbb{Z} = \mathbb{R}$ and $g_{\theta}(z)  = \frac{1}{\sqrt{2 \pi \sigma^2}}\exp \left( - \frac{(z-\mu)^2}{2\sigma^2}\right)$, $\theta = (\mu,\sigma^2) \in \Theta = \mathbb{R}\times ]0,+\infty[$, despite the fact that this choice does not satisfy Assumptions A\ref{entrop:hyp:theta:compact}, A\ref{entrop:hyp:density:bounded} and A\ref{entrop:hyp:Z:compact}. This choice provides a better understanding of the illustrated notions since the Gaussian mixture is the classical mixture setting. Moreover, while   Assumptions A\ref{entrop:hyp:theta:compact}, A\ref{entrop:hyp:density:bounded} and A\ref{entrop:hyp:Z:compact} are used to ease the proofs, one could project that these assumptions could be weakened, in particular for the Gaussian setting since the simulations seem to illustrate our results in that specific case. 
\item Assumption A\ref{entrop:hyp:Z:compact} is a strong assumption, nevertheless it implies, thanks to the Riesz representation theorem, the compactness of $\mathcal{M}_1\left(\mathbb{Z} \right)$ stated in Proposition \ref{entrop:prop:probas:compacness} below. This result is commonly known as the Banach-Alaoglu theorem (see \citet{rudin:1991}).
Adding tightness assumptions on $\left\{g_\theta \;,\; \theta \in \Theta\right\}$ and $P^\star$  could allow us to weaken   A\ref{entrop:hyp:Z:compact} by assuming that $\mathbb{Z}$  is locally compact only.  
\end{enumerate}
\end{remark}

\begin{proposition}
\label{entrop:prop:probas:compacness}
$\mathcal{M}_1\left(\mathbb{X} \right)$ and, if Assumption A\ref{entrop:hyp:Z:compact} holds, $\mathcal{M}_1\left(\mathbb{\mathbb{Z}} \right)$ and therefore $\mathcal{M}_1\left(\mathbb{X} \right)\times\mathcal{M}_1\left( \mathbb{Z}\right)^r$ are compact sets relatively to their weak$^\star$ topology - that is the topology of the simple convergence over the continuous functions.
\end{proposition}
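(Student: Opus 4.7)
The proof splits into three pieces corresponding to the three claims.

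First, for $\mathcal{M}_1(\mathbb{X})$, since $\mathbb{X} = \{1,\ldots,r\}$ is finite, $\mathcal{M}_1(\mathbb{X})$ is the standard simplex in $\mathbb{R}^r$, i.e.\ the intersection of the closed hyperplane $\{\nu : \sum_x \nu(x) = 1\}$ with the closed positive orthant. It is a closed and bounded subset of $\mathbb{R}^r$, hence compact by Heine–Borel; the weak$^\star$ topology coincides here with the usual Euclidean topology since every function on a finite set is continuous.

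Second, for $\mathcal{M}_1(\mathbb{Z})$ under A\ref{entrop:hyp:Z:compact}, I would invoke the Riesz representation theorem: since $\mathbb{Z}$ is a compact metric space, the dual of $C(\mathbb{Z})$ (continuous real-valued functions with the sup norm) is isometrically isomorphic to the space of signed Radon measures on $\mathbb{Z}$, and probability measures are identified with the positive Radon measures of total mass one. Via this identification, $\mathcal{M}_1(\mathbb{Z})$ sits inside the closed unit ball of $C(\mathbb{Z})^\star$, which is weak$^\star$ compact by Banach–Alaoglu (see \citet{rudin:1991}). It remains to check that $\mathcal{M}_1(\mathbb{Z})$ is weak$^\star$ closed in this ball: the two defining conditions are $\mu(\mathbf{1}) = 1$ and $\mu(f) \ge 0$ for every non-negative $f \in C(\mathbb{Z})$, both of which are intersections of weak$^\star$ closed sets (preimages of closed intervals under the continuous evaluation maps $\mu \mapsto \mu(f)$). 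A weak$^\star$ closed subset of a weak$^\star$ compact set is compact, yielding the claim.

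Third, for the product $\mathcal{M}_1(\mathbb{X}) \times \mathcal{M}_1(\mathbb{Z})^r$, I would apply Tychonoff's theorem (or simply the fact that a finite product of compact spaces is compact), endowing the product with the product of the weak$^\star$ topologies, which is the natural topology of simple convergence on tuples.

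The main subtlety, which is where care is needed, is the second step: one must be precise about the identification between probability measures and positive normalized functionals on $C(\mathbb{Z})$, and justify that the positivity and normalization conditions are indeed preserved under weak$^\star$ limits. This relies on the compact metric structure of $\mathbb{Z}$ (so that $C(\mathbb{Z})$ separates points and $\mathbf{1} \in C(\mathbb{Z})$) and on the Riesz representation theorem to exclude the appearance of a non-measure limit; in a non-compact setting mass could escape to infinity and this is precisely why A\ref{entrop:hyp:Z:compact} is invoked.
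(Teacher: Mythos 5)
Your proof is correct and follows essentially the same route the paper indicates: Remark \ref{entrop:rem:gaussian:setting} attributes the compactness of $\mathcal{M}_1\left(\mathbb{Z}\right)$ to the Riesz representation theorem combined with Banach--Alaoglu, which is exactly the identification and weak$^\star$ closedness argument you spell out, and the finite-simplex and finite-product steps are the standard completions. No gaps.
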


In the sequel we use the following notation: 
\begin{equation}
\label{entrop:eq:Dr}
\mathcal{D}_r = \mathcal{M}_1\left(\mathbb{X} \right)\times \mathcal{M}_1\left( \mathbb{Z}\right)^r
\end{equation} 

\subsection{Mixing entropy criterion}

For all $\theta = (\theta_1,\ldots,\theta_r) =: (\theta_x)_{x=1}^r $ in $\Theta^r$, we define the applications 
\begin{equation*}
\begin{array}{cccc}
\mathbb{H}_\theta \ : & \mathcal{D}_r  &\longrightarrow& \mathbb{R}\\
&(\nu,\left(G_x \right)_{x=1}^r))  &\longmapsto& H(\nu) + \sum_{x=1}^r \nu(x) H\left(G_x  ||g_{\theta_x}\right)
      \end{array}
\end{equation*}

 and 

\begin{equation*}
\begin{array}{cccc}
\mathbb{H}\ : & \mathcal{D}_r  &\longrightarrow& \mathbb{R}\\
&(\nu,\left(G_x \right)_{x=1}^r))  &\longmapsto&  \inf_{\theta \in \Theta^r}\mathbb{H}_\theta(\nu,\left(G_x \right)_{x=1}^r) 
      \end{array}
\end{equation*}

\begin{remark}
\label{entrop:rem:hiddenrdep}
\begin{enumerate}
\item We call the functions $\mathbb{H}_\theta$ and $\mathbb{H}$  \textit{mixing entropy functions} or \textit{criteria}.
\item Despite the fact that $r$ will vary, we voluntarily omit to indicate the dependency in $r$ of the  mixing entropy  functions.  It is justified since, for any $r'>r\ge 1$, we may embed any vector $\left(\nu, \left( G_x\right)_{x=1}^r \right)$ in $\left(\mathcal{C}^\star_{r'} \right)$ while keeping its mixing entropy: Define, for $x$ in $\{1,\ldots,r\}$,  $\widetilde\nu(x)=\nu(x)$ and, for $r'\ge x>r$, $\widetilde\nu(x)=0$. Define,  for $x$ in $\{1,\ldots,r\}$, $\widetilde G_x=G_x$ and, for $r'\ge x>r$, define $\widetilde G_x$ as any element of $\mathcal{M}_1\left( \mathbb{Z}\right)$. Then, thanks to the convention $0\log 0= 0$,  $$\mathbb{H}(\nu,\left(G_x \right)_{x=1}^r)) = \mathbb{H} \left(\widetilde\nu,\left(\widetilde G_x \right)_{x=1}^{r'}\right)\;.$$
\end{enumerate}

\end{remark}

Proposition \ref{entrop:prop:H:continue} below states the existence and the continuity of the mixing entropy functions under the contions  A\ref{entrop:hyp:theta:compact}-\ref{entrop:hyp:density:continuous}.
\begin{proposition}
\label{entrop:prop:H:continue}
Under A\ref{entrop:hyp:theta:compact}-\ref{entrop:hyp:density:continuous}, 
\begin{enumerate}
\item for all $\theta$ in $\Theta$ $\mathbb{H}_\theta$ is well defined ($\mathbb{H}_\theta<\infty$) ,\label{entrop:eq:prop:Htheta}
\item $\mathbb{H}$ is well define ( $\mathbb{H}<\infty$) and satisfies, for all $(\nu,\left(G_x \right)_{x=1}^r))$ in $ \mathcal{D}_r$, \label{entrop:eq:prop:H}
\begin{equation*}
\mathbb{H}(\nu,\left(G_x \right)_{x=1}^r)) = H(\nu) + \sum_{x=1}^r \nu(x) \inf_{\theta_x \in \Theta}H\left(G_x  ||g_{\theta_x}\right)\;,
\end{equation*} 
\item the functions  $\mathbb{H}_\theta$, for all $\theta$ in $\Theta$, and  $\mathbb{H}$ are continuous, \label{entrop:eq:prop:continuity}
\item and the infimums $\inf_{\theta_x \in \Theta}H\left(G_x  ||g_{\theta_x}\right)$ are reached in $\Theta$.\label{entrop:eq:prop:Hbis}
\end{enumerate}
\end{proposition}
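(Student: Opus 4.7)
The plan is to build up the four statements in order, leveraging Assumption A\ref{entrop:hyp:density:bounded} to uniformly bound $|\log g_\theta|\le \log C$, and Assumption A\ref{entrop:hyp:density:continuous} to convert continuity in $\theta$ into uniform continuity of $g_\theta$ over $\mathbb{Z}$.

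For statement (\ref{entrop:eq:prop:Htheta}), the Shannon term $H(\nu)$ is a finite sum of bounded terms on the simplex $\mathcal{M}_1(\mathbb{X})$ (using the convention $0\log 0=0$), and each cross-entropy $H(G_x || g_{\theta_x})=-\int\log g_{\theta_x}\,\mathrm{d}G_x$ is bounded in absolute value by $\log C$. Statement (\ref{entrop:eq:prop:H}) then follows by interchanging the infimum and the sum: since $\theta=(\theta_x)_{x=1}^r$ ranges over the product $\Theta^r$ and each summand depends on a single coordinate $\theta_x$, the coordinates are optimized independently, and finiteness comes from the same bound.

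The central technical step, which underlies both (\ref{entrop:eq:prop:continuity}) and (\ref{entrop:eq:prop:Hbis}), is the joint continuity of $(\theta,G)\mapsto H(G || g_\theta)$ on $\Theta\times\mathcal{M}_1(\mathbb{Z})$. I would establish the following convergence lemma: if $\theta_n\to\theta$ in $\Theta$ and $G_n\to G$ weak$^\star$ in $\mathcal{M}_1(\mathbb{Z})$, then $\int\log g_{\theta_n}\,\mathrm{d}G_n\to\int\log g_\theta\,\mathrm{d}G$. By A\ref{entrop:hyp:density:continuous}, $g_{\theta_n}\to g_\theta$ uniformly on $\mathbb{Z}$, and since $\log$ is uniformly continuous on $[1/C,C]$, this upgrades to $\log g_{\theta_n}\to\log g_\theta$ uniformly. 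The decomposition
\[
\int \log g_{\theta_n}\,\mathrm{d}G_n-\int \log g_\theta\,\mathrm{d}G = \int (\log g_{\theta_n}-\log g_\theta)\,\mathrm{d}G_n + \int \log g_\theta\,\mathrm{d}(G_n-G)
\]
then controls the first summand by the uniform distance and the second by weak$^\star$ convergence, since $\log g_\theta$ is continuous bounded on the compact $\mathbb{Z}$. Combined with the obvious continuity of $\nu\mapsto H(\nu)$ and $\nu\mapsto\nu(x)$, this yields joint continuity of $(\theta,\nu,(G_x))\mapsto\mathbb{H}_\theta(\nu,(G_x))$ on the compact space $\Theta^r\times\mathcal{D}_r$ (compactness from Proposition~\ref{entrop:prop:probas:compacness}). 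Item (\ref{entrop:eq:prop:continuity}) for the $\mathbb{H}_\theta$'s and item (\ref{entrop:eq:prop:Hbis}) follow at once, the latter by the Weierstrass theorem applied to the compact $\Theta$.

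The continuity of $\mathbb{H}$ itself is then the infimum, over a compact parameter space, of a jointly continuous function. I would argue by double inequality along any convergent sequence $(\nu_n,(G_x^n))\to(\nu,(G_x))$: the $\limsup$ bound comes from $\mathbb{H}(\nu_n,(G_x^n))\le\mathbb{H}_\theta(\nu_n,(G_x^n))$ evaluated at any fixed $\theta$ and then taking an infimum in $\theta$; the $\liminf$ bound follows by extracting, via compactness of $\Theta^r$, a convergent subsequence of minimizers $\theta^{n_k}\to\theta^\star$ and invoking the joint continuity above to conclude $\mathbb{H}_{\theta^{n_k}}(\nu_{n_k},(G_x^{n_k}))\to\mathbb{H}_{\theta^\star}(\nu,(G_x))\ge\mathbb{H}(\nu,(G_x))$. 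The main obstacle is precisely this joint-continuity lemma: the argument combining uniform convergence of $g_{\theta_n}$ (from A\ref{entrop:hyp:density:continuous}) with weak$^\star$ convergence of $G_n$ inside a single integral is the only non-automatic step; everything else is compactness and bookkeeping.
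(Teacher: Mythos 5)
Your proof is correct and follows essentially the same route as the paper: finiteness from the uniform bound $|\log g_\theta|\le \log C$ of A\ref{entrop:hyp:density:bounded}, continuity of each $\mathbb{H}_\theta$ from the definition of the weak$^\star$ topology, and continuity of $\mathbb{H}$ from the compactness of $\Theta$. The only difference is one of explicitness: you spell out the joint continuity of $(\theta,G)\mapsto H\left(G\,||\,g_\theta\right)$ via A\ref{entrop:hyp:density:continuous} and the two-term decomposition, which is precisely the step the paper compresses into ``this last point together with the compactness assumption on $\Theta$'' and which is indeed what is needed for the lower semicontinuity of the infimum.
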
  
\begin{proof} 
For all $G$ in $\mathcal{M}_1\left( \mathbb{Z} \right)$, and all $\theta$ in $\Theta$, $H(G\ ||\ g_{\theta}) = - \int_\mathbb{Z} \log(g_\theta(z)) \mathrm{d}G(z)$ and, by A\ref{entrop:hyp:density:bounded}, $$|\log(g_\theta(z))|\le \log(C)\;,$$ proving points \ref{entrop:eq:prop:Htheta} and \ref{entrop:eq:prop:H}. The same argument proves, by definition of the weak* topology, that, for any $\theta$ in $\Theta$, $G \mapsto H(G\ ||\ g_{\theta})$ is continuous and it is straightforward to show that $\nu \mapsto H(\nu) = - \sum_{x=1}^r \nu(x) \log(\nu(x))$ is also continuous. Thus, for all $\theta$ in $\Theta^r$, $\mathbb{H}_\theta$ is continuous. Finally, this last point together with the compactness assumption on $\Theta$ (A\ref{entrop:hyp:theta:compact}) lead to the continuity of $\mathbb{H}$ and achieves the proof of points \ref{entrop:eq:prop:continuity} and \ref{entrop:eq:prop:Hbis}.
\end{proof}

Now, define the following subsets/condition on $\mathcal{D}_r$, 

\begin{eqnarray*}
\left(\mathcal{C}^\star_r \right)& =& \left\{(\nu,\left(G_x \right)_{x=1}^r)) \in  \mathcal{D}_r  \ \mbox{such that} \ \sum_{x=1}^r \nu(x) G_x = P^\star \right\}\;,\\
\left(\mathcal{C}^n_r \right)& =& \left\{(\nu,\left(G_x \right)_{x=1}^r)) \in  \mathcal{D}_r  \ \mbox{such that} \ \sum_{x=1}^r \nu(x) G_x = P^n \right\}\;.
\end{eqnarray*}
 
$\left(\mathcal{C}^\star_r \right)$ (resp. $\left(\mathcal{C}^\star_r \right)$) is necessarily non empty since it contains $\left(1, \left(P^\star \right) \right)$  (resp. $\left(1, \left(P^n \right) \right)$). It is made of all possible mixture decompositions of $P^\star$ (resp. of $P^n$) into $r$ distributions. If A\ref{entrop:hyp:Z:compact} holds, then  $\left(\mathcal{C}^\star_r \right)$ and $\left(\mathcal{C}^\star_r \right)$  are compact subsets of $\mathcal{D}_r$.
\begin{remark}
\label{entrop:rem:C:r1}
If $r=1$, $\left(\mathcal{C}^\star_1 \right)$ (resp. $\left(\mathcal{C}^n_1 \right)$ ) is made of the single element $\left( 1,(P^\star)\right)$ (resp. $\left( 1,(P^n)\right)$).
\end{remark}

\begin{proposition}
\label{entrop:prop:abscont}
Let   $r\ge 1$, and let $\left( \nu, \left( G_x\right)_{x=1}^r\right)$ be in $\left(\mathcal{C}^\star_r\right)$ (resp. $\left(\mathcal{C}^n_r\right)$). For all   $x$ in $\{1,\ldots,r\}$ such that $\nu(x)>0$, $G_x$ is absolutely continuous with respect with $P^\star$ (resp. $P^n$).
\end{proposition}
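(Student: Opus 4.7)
The plan is to use the definition of absolute continuity directly: show that any $P^\star$-null set (resp.\ $P^n$-null set) is a $G_x$-null set for every $x$ with $\nu(x)>0$. This reduces to a positivity argument on the defining identity of $\left(\mathcal{C}^\star_r\right)$.

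First I would fix $\left(\nu,(G_x)_{x=1}^r\right) \in \left(\mathcal{C}^\star_r\right)$, so that by definition $\sum_{x=1}^r \nu(x) G_x = P^\star$ as measures on $\mathbb{Z}$. Let $A$ be any Borel subset of $\mathbb{Z}$ with $P^\star(A)=0$. Evaluating the mixture identity at $A$ gives
\begin{equation*}
0 = P^\star(A) = \sum_{x=1}^r \nu(x)\, G_x(A).
\end{equation*}
Each term on the right is non-negative because $\nu(x) \geq 0$ and $G_x(A) \geq 0$, so a sum of non-negative reals vanishing forces every summand to vanish: $\nu(x)\, G_x(A) = 0$ for all $x$. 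In particular, for every index $x$ with $\nu(x)>0$, one must have $G_x(A)=0$. This is exactly the definition of $G_x \ll P^\star$.

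The argument for $\left(\mathcal{C}^n_r\right)$ is identical with $P^\star$ replaced by $P^n$. There is essentially no obstacle here: the statement is really a tautology about decompositions of measures into non-negative combinations, and the only subtlety is remembering that both the weights $\nu(x)$ and the component measures $G_x$ are non-negative, which is why cancellations between terms cannot occur.
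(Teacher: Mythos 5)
Your proof is correct and follows essentially the same route as the paper's: evaluate the mixture identity $\sum_{x}\nu(x)G_x(A)=P^\star(A)=0$ on a null set $A$ and use non-negativity of the summands to conclude $G_x(A)=0$ whenever $\nu(x)>0$. Nothing to add.
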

\begin{proof} Let $P$ be equal to $P^\star$ or $P^n$. Let  $(\nu,\left(G_x \right)_{x=1}^r)$ be a mixture decomposition of $P$.
If $P(A) = 0$ for some measurable set $A$, then $\sum_{x=1}^r \nu(x) G_x(A) =\sum_{x\ | \ \nu(x)>0} \nu(x) G_x(A) = 0$. Then $G_x(A) = 0 $ for all $x$ such that $\nu(x)>0$.

\end{proof}

Define the sets
\begin{eqnarray}
\mathcal{D}^\star_r &:=& \left\{(\nu,\left(G_x \right)_{x=1}^r)) \in \left(\mathcal{C}^\star_r \right) \mbox{ such that } \mathbb{H} (\nu,\left(G_x \right)_{x=1}^r)) = \inf_{\left(\mathcal{C}^\star_r \right)} \mathbb{H}\right\}\;,\label{entrop:eq:Dstar:definitions} \\
\mathcal{D}^n_r &:=& \left\{(\nu,\left(G_x \right)_{x=1}^r)) \in \left(\mathcal{C}^n_r \right) \mbox{ such that } \mathbb{H} (\nu,\left(G_x \right)_{x=1}^r)) = \inf_{\left(\mathcal{C}^n_r \right)} \mathbb{H}\right\}\;.\label{entrop:eq:Dn:definitions} 
\end{eqnarray}

\begin{remark}
\label{entrop:rem:label:permutation}
For all $\left( \nu , \left( G_x\right)_{x=1}^r\right)$ and all $\sigma$ permutation of $\left\{1,\ldots,r \right\}$ - we call $\sigma$ a labels permutation - if $\left( \nu , \left( G_x\right)_{x=1}^r\right)$ belongs to $\left(\mathcal{C}^\star_r \right)$ (resp. $\left(\mathcal{C}^n_r \right)$), then  $\left( \nu \circ \sigma , \left( G_{\sigma(x)}\right)_{x=1}^r\right)$ also belongs to $\left(\mathcal{C}^\star_r \right)$ (resp. $\left(\mathcal{C}^n_r \right)$). Moreover, it is straightforward to see that $\mathbb{H}$ is invariant under labels permutation and the same result holds for $\mathcal{D}^\star_r$ and $\mathcal{D}^\star_n$.
\end{remark}
As a straightforward consequence of the  continuity of  $\mathbb{H}$ in  Proposition \ref{entrop:prop:H:continue}, Proposition \ref{entrop:prop:D:definitions} below holds: 

\begin{proposition}
\label{entrop:prop:D:definitions}
Assume A\ref{entrop:hyp:theta:compact}-\ref{entrop:hyp:Z:compact},  then for all $r\ge 1$,  $\inf_{\left(\mathcal{C}^\star_r \right)} \mathbb{H}$ (resp. $\inf_{\left(\mathcal{C}^n_r \right)} \mathbb{H}$ ) is reached in $\left(\mathcal{C}^\star_r \right)$ (resp. in $\left(\mathcal{C}^n_r \right)$) and, consequently, $\mathcal{D}^\star_r$  and  $\mathcal{D}^n_r$ are non empty.
\end{proposition}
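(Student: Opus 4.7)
The plan is to invoke the classical compactness-plus-continuity argument: exhibit $(\mathcal{C}^\star_r)$ and $(\mathcal{C}^n_r)$ as closed subsets of the compact space $\mathcal{D}_r$, so they are themselves compact, and then apply the continuity of $\mathbb{H}$ established in Proposition \ref{entrop:prop:H:continue} to conclude that the infimum is attained. Since the resulting $\mathcal{D}^\star_r$ and $\mathcal{D}^n_r$ are by definition the sets of minimizers, nonemptiness follows immediately.

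First, I would recall from Proposition \ref{entrop:prop:probas:compacness} that under A\ref{entrop:hyp:Z:compact} the product space $\mathcal{D}_r = \mathcal{M}_1(\mathbb{X})\times \mathcal{M}_1(\mathbb{Z})^r$ is compact for the product weak$^\star$ topology. Next I would show that the mixture-decomposition constraint defining $(\mathcal{C}^\star_r)$ (and analogously $(\mathcal{C}^n_r)$) is closed. Let $(\nu^k,(G^k_x)_{x=1}^r)$ be a sequence in $(\mathcal{C}^\star_r)$ converging to $(\nu,(G_x)_{x=1}^r)$ in $\mathcal{D}_r$. For any continuous function $f$ on $\mathbb{Z}$ (which is bounded since $\mathbb{Z}$ is compact), the scalar convergence $\nu^k(x)\to \nu(x)$ together with the weak$^\star$ convergence $\int f\,\mathrm{d}G^k_x \to \int f\,\mathrm{d}G_x$ gives
\begin{equation*}
\int f\,\mathrm{d}\!\left(\sum_{x=1}^r \nu^k(x) G^k_x\right) = \sum_{x=1}^r \nu^k(x)\int f\,\mathrm{d}G^k_x \longrightarrow \sum_{x=1}^r \nu(x)\int f\,\mathrm{d}G_x .
\end{equation*}
Since the left-hand side equals $\int f\,\mathrm{d}P^\star$ for every $k$, we conclude $\sum_{x=1}^r \nu(x)G_x = P^\star$, i.e.\ the limit lies in $(\mathcal{C}^\star_r)$. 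The same argument applies with $P^\star$ replaced by $P^n$.

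Having established that $(\mathcal{C}^\star_r)$ and $(\mathcal{C}^n_r)$ are closed subsets of the compact space $\mathcal{D}_r$, they are themselves compact. By Proposition \ref{entrop:prop:H:continue} the function $\mathbb{H}$ is continuous on $\mathcal{D}_r$, and a continuous real-valued function on a nonempty compact set attains its infimum. The sets are nonempty because, as noted in the excerpt, they contain the trivial decomposition $(1,(P^\star))$ (resp.\ $(1,(P^n))$). Hence $\inf_{(\mathcal{C}^\star_r)}\mathbb{H}$ and $\inf_{(\mathcal{C}^n_r)}\mathbb{H}$ are attained, so $\mathcal{D}^\star_r$ and $\mathcal{D}^n_r$ are nonempty.

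The only step that requires care is the closedness of the constraint: one must verify that the product $(\nu,G)\mapsto \nu\cdot G$ (with $\nu$ a simplex weight and $G$ a weak$^\star$-convergent sequence of probability measures) is jointly continuous against continuous bounded test functions. This is straightforward because $\nu^k(x)$ are bounded scalars converging pointwise, so the scalar$\times$(weak$^\star$) product passes to the limit. No deeper functional analytic input beyond the Riesz representation theorem (already used in Proposition \ref{entrop:prop:probas:compacness}) is needed.
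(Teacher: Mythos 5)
Your proof is correct and follows essentially the same route as the paper, which simply asserts that $\left(\mathcal{C}^\star_r\right)$ and $\left(\mathcal{C}^n_r\right)$ are compact subsets of $\mathcal{D}_r$ and invokes the continuity of $\mathbb{H}$ from Proposition \ref{entrop:prop:H:continue}; you merely fill in the (correct) verification that the constraint is weak$^\star$-closed. The sequential argument is legitimate here because $\mathbb{Z}$ being a compact metric space (A\ref{entrop:hyp:Z:compact}) makes $C(\mathbb{Z})$ separable and hence the weak$^\star$ topology on $\mathcal{M}_1(\mathbb{Z})$ metrizable.
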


 $\mathcal{D}^\star_r$  (resp. $\mathcal{D}^n_r$) is made of the mixture decompositions of  $P^\star$ (resp. $P^n$) that minimize the mixing entropy $\mathbb{H}$ which is the \textit{best} compromise between the entropy of $\nu$ and the average cross entropy between the distributions $G_x$'s and the family $\left\{g_\theta \ , \ \theta \in \Theta \right\}$.
The first remarkable result is given by Theorem \ref{entrop:th:Dn:converge} below that ensures the consistency of the  optimal mixture decompositions of $P^n$.
 
\begin{theorem}
\label{entrop:th:Dn:converge}
Assume A\ref{entrop:hyp:theta:compact}-\ref{entrop:hyp:Z:compact},  then for all $r\ge 1$,
$$\mbox{a.s.} \;,\;\lim\limits_{n \to \infty} \mathcal{D}^n_r  \subset \mathcal{D}^\star_r \;,  $$ 
meaning that if we choose,  for all $n\ge 1$,  $(\nu^n,\left(G^n_x \right)_{x=1}^r))  $ in  $\mathcal{D}^n_r $, then any convergent subsequence  $(\nu^{u_n},\left(G^{u_n}_x \right)_{x=1}^r))_{n\ge 1} $ in the compact set $\mathcal{D}_r$ \eqref{entrop:eq:Dr} has its limit in $\mathcal{D}^\star_r $.
\end{theorem}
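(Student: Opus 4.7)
The plan is the classical M-estimator consistency scheme: I will extract a subsequential limit of the chosen minimizers by compactness, verify that the limit satisfies the decomposition constraint $\left(\mathcal{C}^\star_r\right)$, and finally bound its mixing entropy by $\inf_{\left(\mathcal{C}^\star_r\right)}\mathbb{H}$. The first two points are essentially free: Proposition \ref{entrop:prop:probas:compacness} ensures that any sequence $(\nu^n,(G^n_x))\in\mathcal{D}_r^n$ has a subsequential limit $(\nu^\infty,(G^\infty_x))\in\mathcal{D}_r$; Varadarajan's theorem yields $P^n\to P^\star$ weakly almost surely; and the map $(\nu,(G_x))\mapsto\sum_{x=1}^r\nu(x)G_x$ is continuous on $\mathcal{D}_r$ for the weak$^\star$ topology, so $\sum_x\nu^\infty(x)G^\infty_x=P^\star$, i.e.\ $(\nu^\infty,(G^\infty_x))\in\left(\mathcal{C}^\star_r\right)$. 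Continuity of $\mathbb{H}$ from Proposition \ref{entrop:prop:H:continue} then gives $\mathbb{H}(\nu^{u_n},(G^{u_n}_x))\to\mathbb{H}(\nu^\infty,(G^\infty_x))$ along the subsequence.

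The real content is the remaining inequality $\mathbb{H}(\nu^\infty,(G^\infty_x))\le\inf_{\left(\mathcal{C}^\star_r\right)}\mathbb{H}$. The strategy is to fix any $(\nu^\star,(G^\star_x))\in\mathcal{D}_r^\star$ and construct, for each $n$, a decomposition $(\widetilde\nu^n,(\widetilde G_x^n))\in\left(\mathcal{C}_r^n\right)$ of the empirical distribution whose mixing entropy converges to $\mathbb{H}(\nu^\star,(G^\star_x))$. Minimality of $(\nu^n,(G^n_x))$ in $\left(\mathcal{C}_r^n\right)$ then forces $\mathbb{H}(\nu^{u_n},(G^{u_n}_x))\le\mathbb{H}(\widetilde\nu^{u_n},(\widetilde G_x^{u_n}))$, and letting $n\to\infty$ closes the proof.

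Producing the approximating sequence is the main obstacle, because $P^n$ is atomic on the sample while $P^\star$ is typically diffuse, so the densities $\rmd G^\star_x/\rmd P^\star$ (which exist by Proposition \ref{entrop:prop:abscont} for $x$ with $\nu^\star(x)>0$) cannot be evaluated at the observed points in any canonical way. I would bypass this by introducing auxiliary labels. On an enlargement of the probability space, draw, conditionally on the sample, independent variables $X_k\sim\pi^\star(\cdot|Z_k)$, where $\pi^\star(x|z):=\nu^\star(x)\frac{\rmd G^\star_x}{\rmd P^\star}(z)$ is a well-defined stochastic kernel $P^\star$-almost everywhere; the pairs $(X_k,Z_k)$ are then i.i.d.\ on $\mathbb{X}\times\mathbb{Z}$ with joint law $\nu^\star(x)G^\star_x(\rmd z)$. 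Setting $\widetilde\nu^n(x):=\tfrac{1}{n}\#\{k:X_k=x\}$ and $\widetilde G_x^n$ equal to the empirical distribution of $\{Z_k:X_k=x\}$ (with any fixed element of $\mathcal{M}_1(\mathbb{Z})$ when the count is zero) produces an element of $\left(\mathcal{C}_r^n\right)$, and Varadarajan's theorem applied on the product $\mathbb{X}\times\mathbb{Z}$ forces the joint empirical to converge weakly almost surely to $\nu^\star(x)G^\star_x(\rmd z)$, whence $(\widetilde\nu^n,(\widetilde G_x^n))\to(\nu^\star,(G^\star_x))$ weakly; the vanishing contribution from components with $\nu^\star(x)=0$ is harmless since the cross-entropy factor is bounded under A\ref{entrop:hyp:density:bounded}. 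Continuity of $\mathbb{H}$ then yields the desired convergence of entropies. Finally, since the inequality to be proven involves only the $Z_k$'s, almost sure validity on the enlarged space descends back to the original probability space.
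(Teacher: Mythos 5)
Your proposal is correct and follows essentially the same route as the paper's proof: the auxiliary labels $X_k\sim\pi^\star(\cdot|Z_k)$ with $\pi^\star(x|z)=\nu^\star(x)\frac{\rmd G^\star_x}{\rmd P^\star}(z)$ are exactly the paper's posterior construction (Lemma \ref{entrop:lemma:aposteriori:X}), and the resulting empirical decomposition $(\widetilde\nu^n,(\widetilde G^n_x))\in\left(\mathcal{C}^n_r\right)$ converging to $(\nu^\star,(G^\star_x))$ is the paper's $(\nu^\star_n,(G^\star_{n,x}))$. Your write-up is in fact somewhat more explicit than the paper's on the measure-theoretic points (enlargement of the probability space, Varadarajan on the product space, the $\nu^\star(x)=0$ components).
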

\begin{proof}
We start the proof with  Lemma \ref{entrop:lemma:aposteriori:X} that allows,  in the context of a mixture distribution,  to build  the hidden variable posteriorly on the observation. Let $(\nu^\star,\left(G^\star_x \right)_{x=1}^r)$ be any decomposition of $P^\star$ in  $\left( \mathcal{C}^\star_r\right)$. 
Let  $\left(\widetilde X, \widetilde Z \right)$ be a random vector where  $\widetilde X$ is distributed according to $\nu^\star$ and, conditionally on $\widetilde X = x$, $\widetilde Z$ is distributed according to $G^\star_x$. Define, for all  $z$ in the support of $P^\star$, and all $x$ in $\mathbb{X}$, 
\begin{equation}
\label{entrop:eq:phi:x:cond}
\Phi(x|z) = \mathbb{P} \left(\widetilde X =x |\widetilde Z= z  \right)\;.
\end{equation}
\begin{lemma}
\label{entrop:lemma:aposteriori:X}
If $Z$ is a random variable distributed according to $P^\star$. If, conditionnaly on $Z$, $X$ is distributed according to  $\Phi(\cdot|Z)$, defined by \eqref{entrop:eq:phi:x:cond}, then 
$(X,Z)$ is distributed according to  the joint distribution $P^\star(x,\rm d z) = \nu^\star(x) G^\star_x(\rm d z)$. 
\end{lemma}
The proof of Lemma \ref{entrop:lemma:aposteriori:X} is straightforward. Now, let $(\nu^{u_n},\left(G^{u_n}_x \right)_{x=1}^r)_{n\ge 1} $ such as in the statement of Theorem \ref{entrop:th:Dn:converge}. Denote by $(\nu^{\infty},\left(G^{\infty}_x \right)_{x=1}^r) $  the limit, in $\mathcal{D}_r$, of this subsequence. By the law of large number,  $(\nu^{\infty},\left(G^{\infty}_x \right)_{x=1}^r) $ belongs to $\left(\mathcal{C}^\star_r \right)$.
Now, let $(\nu^\star,\left(G^\star_x \right)_{x=1}^r))  $ be any element of $\mathcal{D}^\star_r$, and let, for all $k$ in $\{1,\ldots,n\}$, $\Phi_k(x) = \Phi(x|Z_k)$ and $X_k$ a random variable distributed according to $\Phi_k$ such as described in Lemma \ref{entrop:lemma:aposteriori:X}. Define $\nu^\star_n(x) = \frac{1}{n} \sum_{k=1}^{n}\mathds{1}_{x} (X_k)$ and  
$$G^\star_{n,x} = \frac{ \sum_{k=1}^n  \mathds{1}_{x}(X_k) \delta_{Z_k}  }{\nu^\star_n(x) }\mbox{ if } \nu^\star_n(x)\neq 0, \mbox{ and }P^\star \mbox{ otherwise}\;.$$
where $\delta_{Z_k}$ is the Dirac distribution on $\{Z_k\}$. Then $\left(\nu^\star_n , \left(G^\star_{n,x} \right)_{x=1}^r \right)$
belongs to  $\left( \mathcal{C}^n_r\right)$  and satisfies, a.s., $\nu^\star_n\xrightarrow[n\to\infty]{\mbox{weak}^\star}\nu^\star$  and, for all $x$ in $\{1,\ldots,r\}$, a.s., $G^\star_{n,x}\xrightarrow[n\to\infty]{\mbox{weak}^\star}G^\star_{x}$ . By Proposition \ref{entrop:prop:H:continue}, $\mathbb{H}$ is continuous and $\lim\limits_{n \to \infty}\mathbb{H}\left(\nu^\star_n,(G^\star_{n,x})_{x=1}^r\right) = \mathbb{H}\left(\nu^\star,(G^\star_{x})_{x=1}^r\right)$.
Moreover, by definition of $(\nu^{u_n},\left(G^{u_n}_x \right)_{x=1}^r)_{n\ge 1} $, for all $n$,
$$\mathbb{H}\left(\nu^\star_{u_n},(G^\star_{{u_n},x})_{x=1}^r\right)\ge\mathbb{H}\left( \nu^{u_n},\left(G^{u_n}_x \right)_{x=1}^r\right)\;,$$
which leads, when $n$ tends to $\infty$, to 
$$\inf_{\left(\mathcal{C}^\star_r \right)}\mathbb{H} = \mathbb{H}\left(\nu^\star,(G^\star_{x})_{x=1}^r\right)\ge \mathbb{H}\left(\nu^\infty,(G^\infty_{x})_{x=1}^r\right)\;.$$ 
\end{proof}
\section{Properties of $\mathcal{D}^\star_r$} 
\label{entrop:sec:discussion:Dr}
\subsection{Interpretation of  $\mathcal{D}^\star_r$ as a classification rule}
Throughout this section we assume  that the following assumption holds:

\begin{hypH}
\label{entrop:hyp:log:g:integrable}
For all $\theta$ in $\Theta$, $$\int_{\mathbb{Z}} \left|\log(g_{\theta}(z))\right| \mathrm{d}P^\star(z) <\infty\;.$$
\end{hypH}

Let $r\ge 1$. Let $\left(\nu,\left( G^x\right)_{x=1}^r \right)$ be any element in $\left( \mathcal{C}^\star_r \right)$. From  Proposition \ref{entrop:prop:abscont}, for all $x$ in $\{1,\ldots,r\}$, there exists $g_x$ in $L^1\left(P^\star\right)$   such that $\rmd G_x(z) = g_x(z) \rmd P^\star(z) $. Define, 
\begin{equation}
\label{entrop:eq:phix}
\phi_x =  \nu(x) g_x\;.
\end{equation}
Let $\Phi_r\left(\mathbb{Z} \right)$ be the set of all the functional vectors $\left(\phi_x \right)_{x=1}^r$  such that for all $x$ in $\{1,\ldots,r\}$, $\phi_x$ is a measurable, $[0,1]$-valued, function of $\mathbb{Z}$ satisfying, for all $z$ in $\mathbb{Z}$, $\sum_{x=1}^r \phi_x (z)= 1$. If the  $\phi_x$'s  are defined by \eqref{entrop:eq:phix}, then $\left(\phi_x \right)_{x=1}^r$ belongs to $\Phi_r\left(\mathbb{Z} \right)$ even if that means changing the $g_x$'s on a $P^\star$-negligible set. Conversely, for all  $\left(\phi_x \right)_{x=1}^r$ in $\Phi_r\left(\mathbb{Z} \right)$, for all $x$ in $\{1,\ldots,r\}$, define  $\nu^\phi(x)$ and $G^\phi_x$  by :
\begin{align}
\label{entrop:eq:nux}
\nu^\phi(x) &:= \int_\mathbb{Z}\phi_x(z)  \rmd P^\star(z) \;,\\ 
 \rm d G^\phi_x(z)  &:= \frac{1}{\nu^\phi(x)} \phi_x(z)  \rmd P^\star(z) \;,
 \label{entrop:eq:Gx}
\end{align}
where \eqref{entrop:eq:Gx} only applies if $\nu^\phi(x)$, given by \eqref{entrop:eq:nux}, is positive (otherwise set $ G^\phi_x$ to any distribution in $\mathcal{M}_1(\mathbb{Z})$). Define for all $\phi$ in $\Phi_r\left(\mathbb{Z} \right)$ the mixing entropy of $\phi$ relatively to $P^\star$: 
 
\begin{equation}
\label{entrop:eq:H:phi}
\mathbb{H}_{P^\star}(\phi):= \mathbb{H}\left(\nu^\phi,\left( G^\phi_x\right)_{x=1}^r\right)\;.
\end{equation}
Then, a basic manipulation of  \eqref{entrop:eq:H:phi} shows that, if we define, for all $\theta =(\theta_1,\ldots,\theta_r)$,
$$\mathbb{H}_{p^\star}(\phi,\theta):= - \sum_{x=1}^r \int_{\mathbb{Z}} \log \left[ g_{\theta_x}(z) \nu^\phi(x) \right] \phi_x(z) \rm d P^\star(z) \;,$$ 
then
\begin{equation*} 
\mathbb{H}_{p^\star}(\phi)= \inf_{\theta \in \Theta^r} \mathbb{H}_{p^\star}(\phi,\theta).
\end{equation*}
Moreover, we necessarily have
\begin{equation}
\label{entrop:eq:parallel:phi:nuG}
\inf_{ \Phi_r\left(\mathbb{Z} \right)}\mathbb{H}_{p^\star}(\phi) = \inf_{  \left(\mathcal{C}^\star_r \right) } \mathbb{H}\left(\nu ,\left( G_x\right)_{x=1}^r\right)\;.
\end{equation}
Define
\begin{equation}
\label{entrop:eq:phi:star:r}
\Phi^\star_r  := \left\{\phi \in \Phi_r\left(\mathbb{Z} \right) \ | \ \mathbb{H}_{p^\star}(\phi) = \inf_{\Phi_r\left(\mathbb{Z} \right)}\mathbb{H}_{p^\star}(\phi)  \right\}\;,
\end{equation}
then Proposition \ref{entrop:prop:Dstar} below is straightforward:
\begin{proposition}
\label{entrop:prop:Dstar} 
If $\Phi^\star_r$ is not empty, then 
$$\mathcal{D}^\star_r = \left\{\left( \nu^\phi,\left( G^\phi_x\right)_{x=1}^r \right)  \ | \ \phi \in  \Phi^\star_r \right\}\;.$$
\end{proposition}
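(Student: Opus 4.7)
The plan is to show that the assignments $\phi \mapsto (\nu^\phi, (G^\phi_x)_{x=1}^r)$ defined by \eqref{entrop:eq:nux}--\eqref{entrop:eq:Gx} and $(\nu, (G_x)_{x=1}^r) \mapsto (\nu(x) g_x)_{x=1}^r$ defined by \eqref{entrop:eq:phix} are essentially mutual inverses between $\Phi_r(\mathbb{Z})$ and $(\mathcal{C}^\star_r)$, and that each preserves the mixing entropy $\mathbb{H}$. Combined with the identity \eqref{entrop:eq:parallel:phi:nuG}, the proposition will drop out by comparing the minimizing sets on both sides.

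First I would establish the inclusion $\{(\nu^\phi, (G^\phi_x)_{x=1}^r) : \phi \in \Phi^\star_r\} \subset \mathcal{D}^\star_r$. By definition \eqref{entrop:eq:H:phi}, $\mathbb{H}_{P^\star}(\phi) = \mathbb{H}(\nu^\phi, (G^\phi_x)_{x=1}^r)$ for every $\phi \in \Phi_r(\mathbb{Z})$, and the construction \eqref{entrop:eq:nux}--\eqref{entrop:eq:Gx} guarantees that $(\nu^\phi, (G^\phi_x)_{x=1}^r)$ lies in $(\mathcal{C}^\star_r)$ since $\sum_x \nu^\phi(x) G^\phi_x(A) = \sum_x \int_A \phi_x\, \rmd P^\star = P^\star(A)$. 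Hence, if $\phi$ realizes $\inf_{\Phi_r(\mathbb{Z})} \mathbb{H}_{P^\star}$, the associated pair attains $\inf_{(\mathcal{C}^\star_r)} \mathbb{H}$ by \eqref{entrop:eq:parallel:phi:nuG}, so it belongs to $\mathcal{D}^\star_r$.

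For the reverse inclusion I would take $(\nu,(G_x)_{x=1}^r) \in \mathcal{D}^\star_r$ and construct a corresponding $\phi \in \Phi^\star_r$. By Proposition \ref{entrop:prop:abscont}, for each $x$ with $\nu(x)>0$ there is a density $g_x = \rmd G_x/\rmd P^\star \in L^1(P^\star)$; setting $\phi_x = \nu(x) g_x$ as in \eqref{entrop:eq:phix} yields $\sum_{x=1}^r \phi_x = 1$ $P^\star$-almost surely (for $x$ with $\nu(x)=0$ simply put $\phi_x \equiv 0$). Modifying the $g_x$'s on a $P^\star$-negligible set if needed ensures $\phi \in \Phi_r(\mathbb{Z})$. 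A direct computation shows $\nu^\phi(x) = \int \nu(x) g_x\, \rmd P^\star = \nu(x)$ and $\rmd G^\phi_x = g_x\,\rmd P^\star = \rmd G_x$, so $(\nu^\phi,(G^\phi_x)_{x=1}^r) = (\nu,(G_x)_{x=1}^r)$. Then $\mathbb{H}_{P^\star}(\phi) = \mathbb{H}(\nu,(G_x)_{x=1}^r) = \inf_{(\mathcal{C}^\star_r)} \mathbb{H}$, which by \eqref{entrop:eq:parallel:phi:nuG} equals $\inf_{\Phi_r(\mathbb{Z})} \mathbb{H}_{P^\star}$, proving $\phi \in \Phi^\star_r$.

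The main subtlety to handle carefully is the coordinates $x$ with $\nu(x) = 0$: the Radon--Nikodym derivative $g_x$ is then only defined up to an arbitrary choice, and the convention used in \eqref{entrop:eq:Gx} (pick any distribution for $G^\phi_x$) must be shown to be compatible with the convention $0 \log 0 = 0$ that makes $\mathbb{H}$ independent of that choice; once this bookkeeping is sorted out, both directions of the equality are immediate consequences of the entropy-preserving bijection, and no further analytic work is required.
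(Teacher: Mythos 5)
Your proof is correct and follows exactly the route the paper intends: the paper states this proposition as an immediate consequence of the entropy-preserving correspondence \eqref{entrop:eq:phix}--\eqref{entrop:eq:Gx} and the identity \eqref{entrop:eq:parallel:phi:nuG}, offering no further argument, and your two inclusions together with the bookkeeping for the $\nu(x)=0$ coordinates supply precisely the omitted details.
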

Now consider the following assumptions: 
\begin{hypH}
\label{entrop:hyp:Z:metric}
The topology of $\mathbb{Z}$ is induced by a metric $d$.
\end{hypH}

If A\ref{entrop:hyp:Z:metric} holds, we denote by $B_d(z_0,\varepsilon)$ the open ball in $\mathbb{Z}$ with respect to $d$, centered in $z_0$ and of radius $\varepsilon$. In that framework,  we define the support of the Borel measure $P^\star$ as the set of all $z$ in $\mathbb{Z}$ such that for all $\varepsilon>0$, $P^\star(B_d(z,\varepsilon))>0$. 

\begin{hypH}
\label{entrop:hyp:g:continuous}
For all $\theta$ in $\Theta$, $g_{\theta} $ is a continuous positive function of $\mathbb{Z}$.
\end{hypH}

\begin{hypH}
\label{entrop:hyp:openZ:infinite}
There exists an  open subset $U$, containing an infinite number of elements of $\mathbb{Z}$, that is included in the support of $P^\star$.
\end{hypH}

\begin{hypH} 
\label{entrop:hyp:theta:coincident}
For all $\theta_1$ and $\theta_2$ in $\Theta$, if for some constant $K>0$, $g_{\theta_1}(z) =  K g_{\theta_2}(z)$ for an infinite number of $z$'s in $\mathbb{Z}$ then, necessarily $g_{\theta_1}=g_{\theta_2}$.
\end{hypH}

\begin{remark}
\label{entrop:rem:expofamily}
 Using  Definition 10.1.5 of \cite{cappe:moulines:ryden:2005}. Consider the case where the family $\{g_\theta\;,\; \theta\in\Theta\}$ is an exponential family of $\mathbb{Z}$ which is: for all $\theta$ in $\Theta$ and all $z$ in $\mathbb{Z}$,
$$g_\theta(z) = h(z)\exp \left( \psi(\theta)^tS(z) - c(\theta)\right)\;,$$
where $S$ (known as as the vector of natural sufficient statistics) and $\psi$ are vector valued functions of the same dimension on $\mathbb{Z}$ and $\Theta$ respectively, $c$ is a real-valued function on $\Theta$ and $h$ is a non-negative real valued function on $\mathbb{Z}$.  Then A\ref{entrop:hyp:theta:coincident} is equivalent to:  $z\mapsto \psi^t S(z) $
constant for an infinite number of $z$'s i.i.f. $\psi = 0$. It is the case in the Gaussian setting where $\mathbb{Z} = \mathbb{R}$ and $S(z) = (1,z,z^2)$:   $\psi_0 + \psi_1z + \psi_2z^2 = 0$ for more than three $z$' i.i.f. $\psi_0=\psi_1=\psi_2=0$. 
\end{remark}

 Alternatively to A\ref{entrop:hyp:Z:metric}-\ref{entrop:hyp:theta:coincident}, we will consider the following assumption:
\begin{hypH}
\label{entrop:hyp:ident:discret}
 $P^\star$ is a discrete distribution on $\mathbb{Z}$. 
\end{hypH}

\begin{theorem} 
\label{entrop:th:phistar} 
Assume A\ref{entrop:hyp:log:g:integrable}.  Assume A\ref{entrop:hyp:Z:metric}-\ref{entrop:hyp:theta:coincident} or A\ref{entrop:hyp:ident:discret}. For all $r\ge 1$, all  $\phi^\star$ in  $\Phi_r\left(\mathbb{Z} \right)$ and all $\theta^\star$ in $\Theta^r$, if  there exists $x_0$ in $\{1,\ldots,r\}$ such that  $P^\star\left(\phi^\star_{x_0} \not\in \{0,1\} \right)>0$, then there exists $\phi$ in  $\Phi_r\left(\mathbb{Z} \right)$ such that: $$\mathbb{H}\left(\phi,\theta^\star \right)<\mathbb{H}\left(\phi^\star,\theta^\star \right) \;.$$
Therefore, if  $\phi^\star$ belongs to $\Phi^\star_r $, then, for all $x$ in $\{1,\ldots,r\}$,  $P^\star$-a.s., $\phi^\star_x= 0$ or $\phi^\star_x = 1$.  Equivalently, for any  mixture decomposition $\left( \nu,\left( G_x\right)_{x=1}^r\right)$ in $\mathcal{D}^\star_r$, the $G_x$'s are necessarily singular.
\end{theorem}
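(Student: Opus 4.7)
The plan is to construct a one-parameter perturbation $(\phi^t)_t$ of $\phi^\star$ along which $t\mapsto\mathbb{H}(\phi^t,\theta^\star)$ is \emph{strictly} concave on a neighborhood of $0$; strict concavity then forces $t=0$ not to be a local minimum, so some small $t\neq 0$ will yield the desired $\phi$ with $\mathbb{H}(\phi,\theta^\star)<\mathbb{H}(\phi^\star,\theta^\star)$. The deduction that the $G_x$'s are singular will then be immediate from the correspondence $\phi\leftrightarrow(\nu^\phi,(G^\phi_x))$ in \eqref{entrop:eq:nux}--\eqref{entrop:eq:Gx}.

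First I would identify a good perturbation direction. Since $P^\star(\phi^\star_{x_0}\notin\{0,1\})>0$ and $\sum_x\phi^\star_x(z)=1$, a finite-union argument over $x_1\in\{1,\dots,r\}\setminus\{x_0\}$ produces some $x_1\ne x_0$ and some $\varepsilon>0$ such that
\[
A:=\bigl\{z\in\mathbb{Z}:\phi^\star_{x_0}(z),\phi^\star_{x_1}(z)\in[\varepsilon,1-\varepsilon]\bigr\}
\]
has $P^\star(A)>0$; in particular $\nu^\star(x_i)\ge\varepsilon P^\star(A)>0$ for $i\in\{0,1\}$. For $|t|\le\varepsilon$ I would set
\[
\phi^t_{x_0}:=\phi^\star_{x_0}+t\mathds{1}_A,\quad \phi^t_{x_1}:=\phi^\star_{x_1}-t\mathds{1}_A,\quad \phi^t_x:=\phi^\star_x\ (x\notin\{x_0,x_1\}),
\]
which lies in $\Phi_r(\mathbb{Z})$, keeps the marginals of the other components fixed, and perturbs $\nu^t(x_0)=\nu^\star(x_0)+tP^\star(A)$ and $\nu^t(x_1)=\nu^\star(x_1)-tP^\star(A)$.

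Next I would exploit the decomposition $\mathbb{H}_{p^\star}(\phi,\theta^\star)=-\sum_x\nu^\phi(x)\log\nu^\phi(x)-\sum_x\int\phi_x\log g_{\theta_x^\star}\,\mathrm{d}P^\star$ (well-defined by A\ref{entrop:hyp:log:g:integrable}). Along the perturbation the cross-entropy term is affine in $t$, while the Shannon part satisfies
\[
\frac{d^2}{dt^2}\!\left[-\sum_x\nu^t(x)\log\nu^t(x)\right]=-P^\star(A)^2\!\left[\frac{1}{\nu^t(x_0)}+\frac{1}{\nu^t(x_1)}\right]<0
\]
on a neighborhood of $0$. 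Hence $t\mapsto\mathbb{H}(\phi^t,\theta^\star)$ is strictly concave there, so $t=0$ cannot be a local minimum: if the first derivative at $0$ is nonzero, a small $t$ of the right sign strictly decreases $\mathbb{H}$; if it vanishes, every small $t\neq 0$ does. In either case $\phi:=\phi^t$ witnesses the strict inequality, and singularity of the $G^{\phi^\star}_x$'s then follows because $\phi^\star_x\in\{0,1\}$ $P^\star$-a.s. makes the sets $\{\phi^\star_x=1\}$ pairwise disjoint.

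I expect the main obstacle to be the construction of $A$: verifying that $\varepsilon>0$ and $x_1$ can be selected so that \emph{both} $\phi^\star_{x_0}$ and $\phi^\star_{x_1}$ are bounded inside $[\varepsilon,1-\varepsilon]$ on a set of positive $P^\star$-measure. This bound is needed both to keep $\phi^t$ valued in $[0,1]$ and, more importantly, to guarantee $\nu^\star(x_0),\nu^\star(x_1)>0$ so that the above second-derivative bound is genuinely negative. The hypotheses A\ref{entrop:hyp:Z:metric}--\ref{entrop:hyp:theta:coincident} and A\ref{entrop:hyp:ident:discret} do not seem to enter this variational plan; they would instead be natural in an alternative proof that focuses on the vanishing-first-derivative case, deduces $g_{\theta_{x_0}^\star}\propto g_{\theta_{x_1}^\star}$ on an infinite subset of $\mathbb{Z}$, and closes via a merge argument exploiting $a\log a+b\log b<(a+b)\log(a+b)$.
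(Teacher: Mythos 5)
Your proposal is correct, and it reaches the conclusion by a genuinely different (and in fact stronger) route than the paper. Both arguments begin identically: extract $x_1\neq x_0$ and a set $A$ of positive $P^\star$-measure on which $\phi^\star_{x_0}$ and $\phi^\star_{x_1}$ are both confined to $[\varepsilon,1-\varepsilon]$ (your countable-exhaustion-plus-pigeonhole construction of $A$ is exactly the paper's $A_\alpha$, and it does guarantee $\nu^{\phi^\star}(x_0),\nu^{\phi^\star}(x_1)\ge \varepsilon P^\star(A)>0$ as you need), and then transfer mass $t\mathds{1}_A$ from $x_1$ to $x_0$. The divergence is in how the perturbation is exploited. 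The paper computes only the \emph{first} derivative at $t=0$; when it vanishes for every admissible $A$ it extracts the identity \eqref{entrop:eq:g1egalg2}, and it is precisely to convert that identity into the global statement $g_{\theta^\star_{x_0}}\nu^{\phi^\star}(x_0)=g_{\theta^\star_{x_1}}\nu^{\phi^\star}(x_1)$ that A\ref{entrop:hyp:Z:metric}--\ref{entrop:hyp:theta:coincident} (via a Lebesgue-point argument and the non-coincidence assumption) or A\ref{entrop:hyp:ident:discret} (pointwise) are invoked; the strict decrease is then obtained by a full merge of the two classes, whose gain is $-KL(\nu^\phi\,||\,\nu^{\phi^\star})<0$, i.e.\ ultimately the strict concavity of $s\mapsto -s\log s$. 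You instead observe that along the whole path the cross-entropy part is affine in $t$ (finite by A\ref{entrop:hyp:log:g:integrable}) while
\begin{equation*}
\frac{d^2}{dt^2}H(\nu^t)=-P^\star(A)^2\left[\frac{1}{\nu^t(x_0)}+\frac{1}{\nu^t(x_1)}\right]<0\;,
\end{equation*}
so $t\mapsto\mathbb{H}_{P^\star}(\phi^t,\theta^\star)$ is strictly concave and $t=0$ cannot be a minimizer (indeed $\min\{g(t),g(-t)\}<g(0)$ needs only midpoint concavity, so no differentiation under the integral is even required). This collapses the paper's two-case analysis and, as you note, never uses A\ref{entrop:hyp:Z:metric}--\ref{entrop:hyp:theta:coincident} or A\ref{entrop:hyp:ident:discret}: your argument establishes the theorem under A\ref{entrop:hyp:log:g:integrable} alone, for arbitrary $\theta^\star$, which is strictly more general than what is stated. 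What the paper's longer route buys in exchange is the structural by-product \eqref{entrop:eq:g1egalg2} (proportionality of the component densities on any region of ambiguous assignment), which has independent interpretive value; your route buys brevity, weaker hypotheses, and robustness. The final deductions (optimal $\phi^\star$ must be $\{0,1\}$-valued a.s., hence the $G_x$'s are mutually singular via the a.s.\ partition $\{\phi^\star_x=1\}$) are the same in both treatments.
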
 

The proof of Theorem \ref{entrop:th:phistar} is postponed in Section  \ref{entrop:sec:proof:th:phistar}.

\begin{remark}
\label{entrop:rem:classif}
Define the empirical version of $\Phi^\star_r$ (Equation \eqref{entrop:eq:phi:star:r}),
\begin{equation}
\label{entrop:eq:phi:n:r}
\Phi^n_r  := \left\{\phi \in \Phi_r (\{Z_1,\ldots,Z_n\})  \ | \ \mathbb{H}_{P^n}(\phi) = \min_{ \Phi_r (\{Z_1,\ldots,Z_n\})}\mathbb{H}_{P^n} \right\}\;,
\end{equation} 
then $P^n$ satisfies A\ref{entrop:hyp:ident:discret} and Theorem \ref{entrop:th:phistar} applies to $P^n$: for all $\phi$ in $\Phi^n_r $, for all $x$ in $\{1,\ldots,r\}$ and all $z$ in  $\{Z_1,\ldots, Z_n\}$, $\phi_x(z)$ equals $1$ or $0$. Moreover, $\phi$ belongs to $\Phi_r\left(\{ Z_1,\ldots,Z_n\}\right)$, thus  $\sum_{x=1}^r\phi_x(z) = 1$ and there exists exactly one $x$  in $\{1,\ldots,r\}$ such that $\phi_x(z)=1$.  We can therefore define, for all $k$ in $\{1,\ldots,n\}$,  $x^\phi_k$ as the unique $x$ in $\{1,\ldots,r\}$ such that $\phi_x(Z_k)=1$. 
Thus, the determination of $\Phi^n_r$ consists in finding the assignment  $(X_1,\ldots,X_n)$ in $\left\{1,\ldots,r \right\}^n$ (the classification rule) that minimizes the mixing entropy criterion.
\end{remark}

\subsection{Mixing entropy in the Gaussian mixture case}
 
We focus  here on the case where  $\left\{g_\theta \;,\; \theta \in \Theta\right\}$ is the Gaussian density family (c.f. Remark \ref{entrop:rem:gaussian:setting}). Define, for all  $\theta = (\mu,\sigma^2)$ in $\mathbb{R}\times ]0,\infty[$, $$g_\theta(z) = \frac{1}{2\pi \sigma^2} \exp\left( -\frac{(z-\mu )^2}{2\sigma^2}\right)\;.$$ We also assume that there exist $\left(\nu^\star_1,\nu^\star_2 \right)$ in $\mathcal{M}\left( \left\{1,2 \right\}\right)$, $\theta^\star_1=\left(\mu^\star_1,{\sigma^\star_1}^2 \right)$ and $\theta^\star_2=\left(\mu^\star_2,{\sigma^\star_2}^2 \right)$ such that $P^\star(\rm d z) = p^\star(z)\rm d z$, with 

\begin{equation}
\label{entrop:eq:gaussian:pstar}
p^\star = \nu^\star_1 g_{\theta^\star_1} + \nu^\star_2 g_{\theta^\star_2}
\end{equation} 

\begin{remark}
Note that, using Remark \ref{entrop:rem:expofamily}, Theorem \ref{entrop:th:phistar} applies and, if 

$\left((\nu(1),\nu(2)),\left( G_1,G_2\right)\right)$ belongs to $ \mathcal{D}^\star_2$, with $\nu(1),\nu(2) \not\in\{0,1\}$, then, for $x=1,2$, there exist $\phi_x$  such that, for all $z$ in $\mathbb{R}$, $\phi_x(z)$   belong to $\{0,1\}$ and such that $\mathrm{d} G_x(z) =\phi_x(z) p^\star(z) \mathrm{d}z$. Therefore, if $g_x(z) = \phi_x(z)  p^\star(z) $, there exists $A$ open subset of $\mathbb{R}$ such that $g_x(z)=0$ almost everywhere in $A$ (there exists $A$ such that $ G_1(A) = 0$, $ G_{2}(A) = 1$,  $ G_1(A^c) = 1$  and $ G_2(A^c) = 0$ ).  In particular $g_x$, or any representative of $g_x$ in $L^1(\mathbb{R})$, can not belong to  $\left\{g_\theta \;,\; \theta \in \Theta\right\}$.
\end{remark}

The purpose of this section is to compare the mixing entropy of the underlying mixture decomposition $\left((\nu^\star_1,\nu^\star_2),\left( g_{\theta^\star_1} ,g_{\theta^\star_2} \right)\right)$ \eqref{entrop:eq:gaussian:pstar} and the entropy of the "merged" version $((1),P^\star)$.

We first provide in Proposition \ref{entrop:prop:gauss:H:simple} below  a nice expression of the minimum of the relative entropy over the Gaussian density family.
 
\begin{proposition}
\label{entrop:prop:gauss:H:simple}
For all $G$ in $\mathcal{M}_1\left( \mathbb{R}\right)$, such that $0<\widehat{\sigma}^2 = \mathbb{E}_G(Z^2) - \mathbb{E}_G(Z)^2<\infty$. Define $\widehat{\mu} = \mathbb{E}_G(Z)$  and  $\widehat\theta = \left( \widehat{\mu},\widehat{\sigma}^2\right)$. Then
\begin{align*}
\min_{\theta = (\mu,\sigma^2)} H\left(G \ ||\ g_\theta \right) &=  H\left(G \ ||\ g_{\widehat\theta} \right)\;,\\
&=  \frac{1}{2} \left( \log\left(\widehat{\sigma}^2 \right) +   \log(2\pi) + 1\right)
\end{align*} 
\end{proposition}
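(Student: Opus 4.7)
The plan is to reduce the cross entropy $H(G\,\|\,g_\theta)$ to an explicit function of $(\mu,\sigma^2)$ by computing the integral against a Gaussian log-density, and then minimize by one-dimensional calculus.

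First I would write
\begin{equation*}
H(G\,\|\,g_\theta) = -\mathbb{E}_G[\log g_\theta(Z)] = \tfrac{1}{2}\log(2\pi\sigma^2) + \frac{1}{2\sigma^2}\mathbb{E}_G\bigl[(Z-\mu)^2\bigr],
\end{equation*}
which is well-defined and finite since $G$ has a finite second moment by hypothesis. Expanding the square and using the definitions $\widehat{\mu}=\mathbb{E}_G(Z)$ and $\widehat{\sigma}^2=\mathbb{E}_G(Z^2)-\widehat{\mu}^{\,2}$, I get the clean identity
\begin{equation*}
\mathbb{E}_G\bigl[(Z-\mu)^2\bigr] = \widehat{\sigma}^2 + (\mu-\widehat{\mu})^2,
\end{equation*}
so that
\begin{equation*}
H(G\,\|\,g_\theta) = \tfrac{1}{2}\log(2\pi\sigma^2) + \frac{\widehat{\sigma}^2+(\mu-\widehat{\mu})^2}{2\sigma^2}.
\end{equation*}

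Next I would minimize in $\mu$ with $\sigma^2$ fixed: the only $\mu$-dependent term is $(\mu-\widehat{\mu})^2/(2\sigma^2)$, which is non-negative and vanishes at $\mu=\widehat{\mu}$. So the optimum has $\mu^\star=\widehat{\mu}$, reducing the problem to minimizing
\begin{equation*}
f(v) := \tfrac{1}{2}\log(2\pi v) + \frac{\widehat{\sigma}^2}{2v}, \qquad v=\sigma^2>0.
\end{equation*}
Differentiating gives $f'(v) = \frac{1}{2v} - \frac{\widehat{\sigma}^2}{2v^2}$, which vanishes uniquely at $v=\widehat{\sigma}^2$, and the sign of $f'$ confirms this is a (strict) minimum. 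Since $\widehat{\sigma}^2>0$ by assumption, this minimizer lies in the admissible range.

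Finally, substituting $\theta=\widehat{\theta}=(\widehat{\mu},\widehat{\sigma}^2)$ back into the expression yields
\begin{equation*}
H(G\,\|\,g_{\widehat{\theta}}) = \tfrac{1}{2}\log(2\pi\widehat{\sigma}^2) + \tfrac{1}{2} = \tfrac{1}{2}\bigl(\log(\widehat{\sigma}^2) + \log(2\pi) + 1\bigr),
\end{equation*}
which is the claimed formula. There is no real obstacle here: this is a routine calculation, the only thing to check is that the proposition's hypothesis $0<\widehat{\sigma}^2<\infty$ guarantees both the integrability needed to write $H(G\,\|\,g_\theta)$ without ambiguity and that $\widehat\theta$ is an interior point of the parameter space. (As a stylistic alternative, one may recognize $H(G\,\|\,g_\theta)-H(G\,\|\,g_{\widehat{\theta}})$ as the Kullback--Leibler divergence $\mathrm{KL}(g_{\widehat{\theta}}\,\|\,g_\theta)$ between two Gaussians, which is non-negative and vanishes only at $\theta=\widehat{\theta}$, giving the minimization by a one-line argument.)
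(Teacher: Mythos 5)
Your proposal is correct and follows essentially the same route as the paper: write $H(G\,\|\,g_\theta)=\tfrac{1}{2}\bigl[\log(2\pi)+\log(\sigma^2)+\tfrac{1}{\sigma^2}\int(z-\mu)^2\,G(\mathrm{d}z)\bigr]$ and observe that this is minimized at $\theta=\widehat\theta$. You merely spell out the elementary minimization steps that the paper leaves implicit.
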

\begin{proof}
The proof is straightforward since, for all $\theta = (\mu,\sigma^2)$, 
\begin{align*}
H\left(G \ ||\ g_\theta \right) &= - \int_\mathbb{R} \log\left[ \frac{1}{\sqrt{2\pi\sigma^2}} \exp \left(-\frac{(z-\mu)^2}{2\sigma^2} \right) \right] G(\rm d z)\\
&= \frac{1}{2} \left[ \log(2\pi) + \log(\sigma^2) +\frac{1}{\sigma^2}\int_\mathbb{R}\left(z-\mu \right)^2 G(\rm d z) \right]\;,
\end{align*}
which is minimized taking $\theta = \widehat{\theta}$.
\end{proof}

Thanks to proposition \ref{entrop:prop:gauss:H:simple}, we can easily show that 
\begin{equation}
\mathbb{H}\left( \left(\nu^\star_1,\nu^\star_2 \right),\left(g_{\theta^\star_1},g_{\theta^\star_2}\right)\right) = H(\nu^\star) + \frac{1}{2} \left[\nu^\star_1 \log({\sigma^\star_1}^2)+\nu^\star_2 \log({\sigma^\star_2}^2) \right] + \frac{1}{2} \left[\log(2\pi) + 1 \right] \;,
\end{equation} 
 and that 
\begin{equation}
\mathbb{H}\left( \left(1\right),\left(P^\star\right)\right) =  \frac{1}{2} \left( \log\left({\sigma^\star}^2 \right) +   \log(2\pi) + 1\right) \;,
\end{equation}
where 
\begin{align*}
{\sigma^\star}^2 &= \mathbb{E}_{P^\star} \left(Z^2\right)- \mathbb{E}_{P^\star} \left(Z\right)^2=  \nu^\star_1{\sigma^\star_1}^2  +  \nu^\star_2{\sigma^\star_2}^2  + \nu^\star_1\nu^\star_2 \left(\mu^\star_1 -\mu^\star_2 \right)^2 \;.
\end{align*}
\begin{proposition}
\label{entrop:prop:identif:gaussian}
 $\mathbb{H}\left( \left(1\right),\left(P^\star\right)\right) >\mathbb{H}\left( \left(\nu^\star_1,\nu^\star_2 \right),\left(g_{\theta^\star_1},g_{\theta^\star_2}\right)\right)$ i.i.f.
 \begin{equation}
 \label{entrop:eq:gaussian:thresh}
 \log\left(\sigma^\star \right) > \nu^\star_1 \log\left(\frac{\sigma^\star_1}{\nu^\star_1 }\right)+\nu^\star_2 \log\left(\frac{\sigma^\star_2}{\nu^\star_2 }\right) \;.
 \end{equation}
Consequently, if Condition \eqref{entrop:eq:gaussian:thresh} is satisfied, then 
\begin{equation}
\label{entrop:eq:gaussian:identif} 
\mathcal{D}^\star_1 \not\subset \mathcal{D}^\star_2
\end{equation} 
 \end{proposition}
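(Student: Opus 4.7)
The plan is to work directly from the two explicit formulas displayed immediately before the proposition. Subtracting them, the common constants $\tfrac12(\log(2\pi)+1)$ cancel, and the strict inequality $\mathbb{H}\bigl((1),(P^\star)\bigr)>\mathbb{H}\bigl((\nu^\star_1,\nu^\star_2),(g_{\theta^\star_1},g_{\theta^\star_2})\bigr)$ becomes
\[
\tfrac12\log({\sigma^\star}^2)\;>\;H(\nu^\star)+\tfrac12\bigl[\nu^\star_1\log({\sigma^\star_1}^2)+\nu^\star_2\log({\sigma^\star_2}^2)\bigr].
\]
Dividing by $2$, expanding $H(\nu^\star)=-\nu^\star_1\log\nu^\star_1-\nu^\star_2\log\nu^\star_2$, and regrouping the logarithms yields exactly the threshold condition \eqref{entrop:eq:gaussian:thresh}. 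Every step is reversible, so this gives the stated equivalence.

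For the consequence $\mathcal{D}^\star_1\not\subset\mathcal{D}^\star_2$, I would first use Remark \ref{entrop:rem:C:r1} to identify $\mathcal{D}^\star_1$ with the singleton $\{((1),(P^\star))\}$, then invoke the embedding of Remark \ref{entrop:rem:hiddenrdep} to view this element in $\mathcal{D}_2$ as $((1,0),(P^\star,Q))$ for an arbitrary $Q\in\mathcal{M}_1(\mathbb{Z})$. That remark also ensures the mixing entropy is preserved by the embedding, so
\[
\mathbb{H}\bigl((1,0),(P^\star,Q)\bigr)=\mathbb{H}\bigl((1),(P^\star)\bigr).
\]
On the other hand, $\bigl((\nu^\star_1,\nu^\star_2),(g_{\theta^\star_1},g_{\theta^\star_2})\bigr)$ lies in $(\mathcal{C}^\star_2)$ by assumption \eqref{entrop:eq:gaussian:pstar}, and under \eqref{entrop:eq:gaussian:thresh} the first part of the proposition gives
\[
\mathbb{H}\bigl((1),(P^\star)\bigr)\;>\;\mathbb{H}\bigl((\nu^\star_1,\nu^\star_2),(g_{\theta^\star_1},g_{\theta^\star_2})\bigr)\;\ge\;\inf_{(\mathcal{C}^\star_2)}\mathbb{H}.
\]
Hence the embedded representative of $\mathcal{D}^\star_1$ fails to minimize $\mathbb{H}$ on $(\mathcal{C}^\star_2)$, so it cannot belong to $\mathcal{D}^\star_2$, which is the desired strict non-inclusion.

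There is no genuine technical obstacle here; the argument is essentially a one-line algebraic rearrangement followed by a soft compactness-free comparison. The only point that requires care is the identification of $\mathcal{D}^\star_1$ with a subset of $\mathcal{D}_2$ via Remark \ref{entrop:rem:hiddenrdep}, which must be mentioned explicitly because $\mathcal{D}^\star_1$ and $\mathcal{D}^\star_2$ formally live in different spaces. One should also recall, as noted in Remark \ref{entrop:rem:gaussian:setting}, that the Gaussian family violates A\ref{entrop:hyp:theta:compact}, A\ref{entrop:hyp:density:bounded} and A\ref{entrop:hyp:Z:compact}; this is harmless since the only ingredient needed is the closed-form minimum of $H(G\,\|\,g_\theta)$ provided by Proposition \ref{entrop:prop:gauss:H:simple}, which is established by direct computation and does not rely on those structural assumptions.
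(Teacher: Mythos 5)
Your proof is correct and follows exactly the route the paper intends: the paper omits the proof because it reduces to comparing the two displayed formulas, which is precisely your algebraic rearrangement (note only that ``dividing by $2$'' should read ``using $\tfrac12\log(\sigma^2)=\log\sigma$'', since the term $H(\nu^\star)$ is not halved). Your handling of the non-inclusion via the embedding of Remark \ref{entrop:rem:hiddenrdep} matches the interpretation the paper itself gives in Remark \ref{entrop:rem:prop:gauss}.
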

 
\begin{remark} 
\label{entrop:rem:prop:gauss}
 \begin{enumerate}
 \item \eqref{entrop:eq:gaussian:identif}  is a notation to assess that for any element $(\left(\nu_1,\nu_2 \right),\left(G_1,G_2 \right)   )$ in  $\mathcal{D}^\star_2$, $\left(\nu_1,\nu_2 \right)$ can not be equal to $(1,0)$ or to $(0,1)$.
 \item  If $\nu^\star_1 =\nu^\star_2 = \frac{1}{2}$ and $\sigma^\star_1=\sigma^\star_2=1$, Condition \eqref{entrop:eq:gaussian:thresh} becomes the following condition on $(\mu^\star_1 ,\mu^\star_2)$:
 \begin{equation}
 \label{entrop:eq:gauss:ident:mu}
 \left|\mu^\star_1 - \mu^\star_2 \right|>2 \sqrt{3}\;.
 \end{equation}
 \item If $\nu^\star_1 =\nu^\star_2 = \frac{1}{2}$, $\mu^\star_1=\mu^\star_2$ and $\sigma^\star_1 = 1$, Condition \eqref{entrop:eq:gaussian:thresh} becomes the following condition on $ \sigma^\star_2$:
 \begin{equation}
\label{entrop:eq:gauss:ident:sigma}
\sigma^\star_2 \in [s_1,s_2]\;,
\end{equation}
where $s_1=4-\sqrt{15}$ and  $s_2=4+\sqrt{15}$.
\end{enumerate}
\end{remark}

\begin{figure}
\begin{center}
\includegraphics[width=0.45\textwidth]{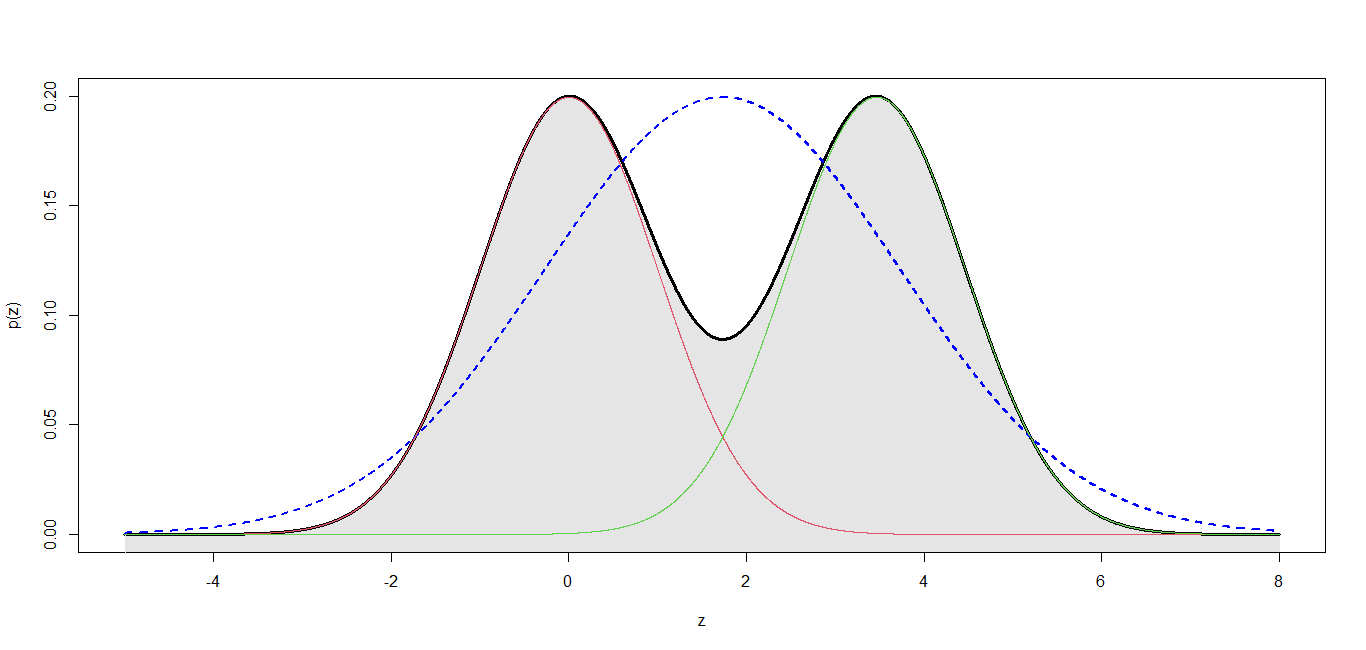}
\includegraphics[width=0.45\textwidth]{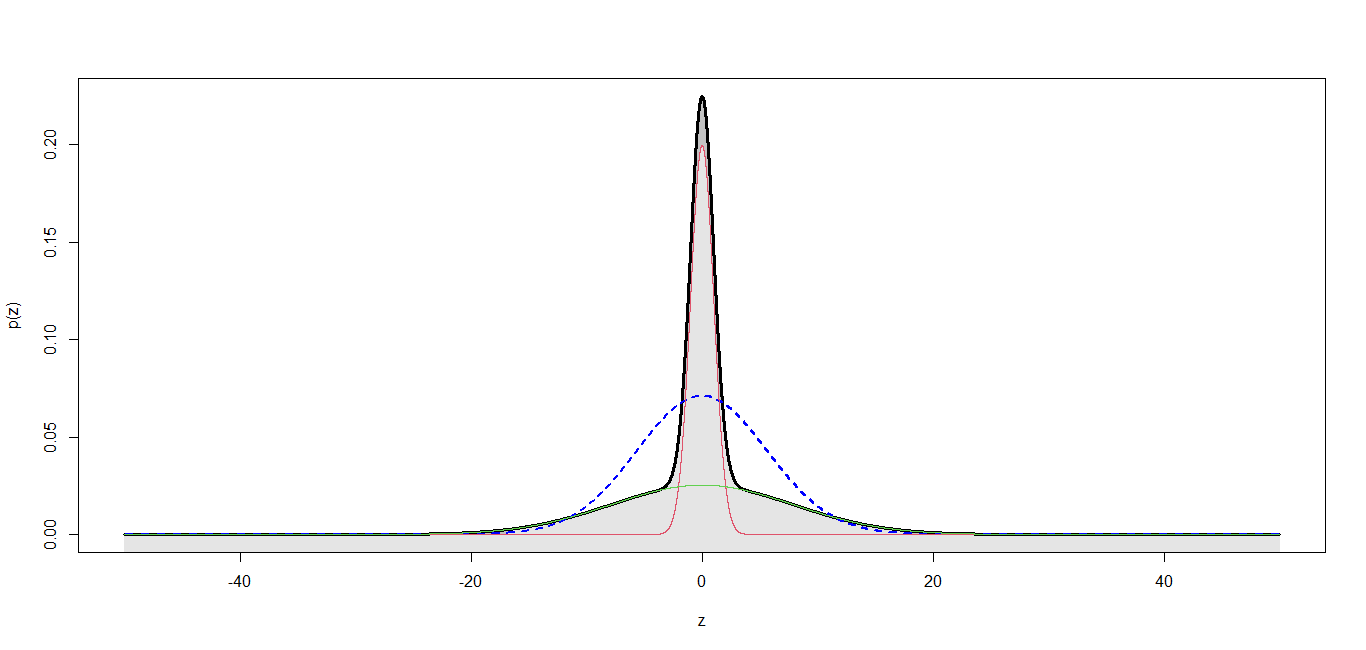}
\caption{Representation of $P^\star$ in the critical cases \eqref{entrop:eq:gauss:ident:mu} (upper graphic with $\mu^\star=2\sqrt{3} $) and   \eqref{entrop:eq:gauss:ident:sigma} (lower graphic with $\sigma^\star = s_2$). In each graphic are represented  $P^\star$ (bold line and filling), the two mixture components $g_{(0,1)}$ and $g_{\theta^\star}$ (thin lines) and $g_{\widehat{\theta}}$, $\widehat{\theta} = \argmin_\theta H\left(P^\star||g_\theta\right)$ (dotted line)}
\label{entrop:fig:identif:illustr}
\end{center}
\end{figure}

Figure \ref{entrop:fig:identif:illustr} represents the cases of equality in  \eqref{entrop:eq:gauss:ident:mu} and  \eqref{entrop:eq:gauss:ident:sigma}.   Despite the relatively large \textit{separation} between the two mixtures, $\mathbb{H}$ is minimum for the \textit{merged} version of this mixture.  
On Figure \ref{entrop:fig:identif:illustr} is  represented the maximum likelihood of $P^\star$ in $\mathcal{C}^\star_1$ : $g_{\widehat{\theta}}$. One can observe that $g_{\widehat{\theta}}$ can be far from the true distribution $P^\star$. However, we should not misinterpret the definition of $\mathcal{D}^\star_2$. Proposition \ref{entrop:prop:identif:gaussian} states that if the component of the mixture are close enough, the  mixture decomposition of $P^\star$ that realizes the minimum mixing entropy  in   $\left(\mathcal{C}^\star_2 \right)$ is $P^\star$ itself. We are not performing here an \textit{estimation} or an approximation of  $P^\star$ by $g_{\widehat{\theta}}$.

\subsection{Mixing entropy in the binary case}
\label{entrop:sec:binary}
We consider here the binary case where $\mathbb{Z}=\{0,1\}$ and where the parameters $\theta = (\mu_0,\mu_1)$  satisfy, for $z$ in $\{0,1\}$, $\mu_z\ge 0$ and $\mu_0+\mu_1 = 1$. We define $g_{\theta} =  \mu_0\delta_0 + \mu_1\delta_1$ and assume that  $P^\star$ belongs to the family $\{g_\theta\;,\;\theta \in \Theta\}$: $$P^\star = \mu^\star_0\delta_0 + \mu^\star_1\delta_1 =g_{\theta^\star}\;.$$ 

The objective of this section is to provide a full description of $\mathcal{D}^\star_r$ for any $r\ge 1$, in this case.

\begin{proposition}
\label{entrop:prop:binary}
For all $r\ge 3$, $\mathcal{D}^\star_r = \mathcal{D}^\star_2$ which means that for all $(\nu , (G_x)_{x=1}^r)$ in $\mathcal{D}^\star_r$, there exits a permutation $\sigma$ of $\{1,\ldots,r\}$ such that, for all $x\ge 3$, $\nu(x)=0$. 

Moreover, for all $((\nu(1),\nu(2)),(G_1,G_2))$ in $\mathcal{D}^\star_2$, either
\begin{itemize}
\item $((\nu(1),\nu(2)),(G_1,G_2))$ belongs to $\mathcal{D}^\star_1$ which means that
\begin{align*}
(\nu(1),\nu(2)) &=(1,0)\mbox{ and }G_1=P^\star \;,\\
\mbox{or }& \\
(\nu(1),\nu(2)) &=(1,0)\mbox{ and }G_2=P^\star \;,
\end{align*}
\item or $\left((\nu(1),\nu(2)),(G_1,G_2)\right)=\left( (\mu^\star_0,\mu^\star_1) , (\delta_0,\delta_1)\right)$
\item or  $\left((\nu(1),\nu(2)),(G_1,G_2)\right)=\left( (\mu^\star_1,\mu^\star_0) , (\delta_1,\delta_0)\right)$.
\end{itemize}

Consequently, if $0<\mu^\star_0,\mu^\star_1<1$, then,
$$\mathcal{D}^\star_1 \varsubsetneq \mathcal{D}^\star_2 = \mathcal{D}^\star_3 = \mathcal{D}^\star_4=\ldots\;,$$
and if  $\mu^\star_0=1$ or $\mu^\star_1=1$ , then, 
$$\mathcal{D}^\star_1 = \mathcal{D}^\star_2 = \mathcal{D}^\star_3 = \mathcal{D}^\star_4=\ldots\;.$$

\end{proposition}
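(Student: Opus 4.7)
My plan rests on the observation that when $\mathbb{Z}=\{0,1\}$ every probability on $\mathbb{Z}$ lies in the parametric family $\{g_\theta\}$, so the inner infimum in the mixing entropy criterion is attained and reduces to a Shannon entropy. The whole statement is then a consequence of the chain rule for Shannon entropy plus elementary case analysis.

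\textbf{Step 1 (cross entropy equals Shannon entropy).} For $G=(1-p)\delta_0+p\delta_1$, the map $\theta\mapsto H(G\ ||\ g_\theta)$ is minimized by choosing $\theta=(1-p,p)$, and the minimum equals the binary entropy $h(p):=-(1-p)\log(1-p)-p\log p=H(G)$. By Proposition~\ref{entrop:prop:H:continue},
\[
\mathbb{H}\bigl(\nu,(G_x)_{x=1}^r\bigr)=H(\nu)+\sum_{x=1}^{r}\nu(x)H(G_x).
\]

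\textbf{Step 2 (joint-entropy reformulation).} Introduce a pair $(X,Z)$ with $X\sim\nu$ and $Z\mid X=x\sim G_x$; a direct expansion identifies the right-hand side above with the joint Shannon entropy $H(X,Z)$ (using $0\log 0=0$ for labels with $\nu(x)=0$). The constraint $(\nu,(G_x))\in(\mathcal{C}^\star_r)$ forces the $Z$-marginal to be $P^\star$, so by the chain rule
\[
\mathbb{H}\bigl(\nu,(G_x)_{x=1}^r\bigr)=H(Z)+H(X\mid Z)=h(\mu^\star_1)+H(X\mid Z).
\]
The trivial decomposition $(1,(P^\star))$ achieves value $h(\mu^\star_1)$ and $H(X\mid Z)\ge 0$, so $\inf_{(\mathcal{C}^\star_r)}\mathbb{H}=h(\mu^\star_1)$, and membership in $\mathcal{D}^\star_r$ is equivalent to $H(X\mid Z)=0$, i.e.\ to $X$ being a $P^\star$-a.s.\ function $f$ of $Z$.

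\textbf{Step 3 (classification of the deterministic assignments).} Since $Z$ takes at most the two values $0,1$, the function $f$ is encoded by the pair $(x_0,x_1):=(f(0),f(1))\in\{1,\dots,r\}^2$. If $x_0=x_1=x^*$, the constraint yields $\nu=\delta_{x^*}$ and $G_{x^*}=P^\star$, the remaining $G_x$'s being arbitrary since they carry zero weight; up to a label permutation this is exactly a $\mathcal{D}^\star_1$ element embedded via Remark~\ref{entrop:rem:hiddenrdep}. If $x_0\neq x_1$, the constraint $\sum_x\nu(x)G_x=P^\star$ forces $\nu(x_0)=\mu^\star_0$, $\nu(x_1)=\mu^\star_1$, $G_{x_0}=\delta_0$, $G_{x_1}=\delta_1$, and zero weight elsewhere; this configuration exists only if $\mu^\star_0,\mu^\star_1>0$, otherwise one of the weights vanishes and we collapse back into the previous case.

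\textbf{Step 4 (conclusion).} For $r=2$, enumerating $(x_0,x_1)\in\{1,2\}^2$ reproduces exactly the four configurations listed in the proposition. When $0<\mu^\star_0,\mu^\star_1<1$ the non-trivial configurations are present, strictly enlarging $\mathcal{D}^\star_1$; when $\mu^\star_0\in\{0,1\}$ they collapse into $\mathcal{D}^\star_1$. For $r\ge 3$, Step~3 shows that every element of $\mathcal{D}^\star_r$ is supported on at most two labels, and Remarks~\ref{entrop:rem:hiddenrdep} and~\ref{entrop:rem:label:permutation} give $\mathcal{D}^\star_r=\mathcal{D}^\star_2$. The only mildly delicate bookkeeping concerns the arbitrariness of $G_x$ on labels with $\nu(x)=0$ and the use of label permutations to match the statement; everything else reduces to the joint-entropy identity of Step~2.
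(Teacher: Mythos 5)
Your proof is correct, and it takes a genuinely different (and slicker) route than the paper, whose own argument is relegated to the supplementary material; the main text's general tool for this situation is Theorem \ref{entrop:th:phistar} under A\ref{entrop:hyp:ident:discret}, proved by a perturbation argument, which only yields that optimal assignments are $\{0,1\}$-valued and would still have to be followed by the same case enumeration you do in Steps 3--4. Your key device is to observe that in the binary setting the family $\{g_\theta\}$ is the \emph{entire} simplex $\mathcal{M}_1(\{0,1\})$ (including the degenerate corners $\delta_0,\delta_1$, which matters for $\inf_\theta H(\delta_z\,\|\,g_\theta)=0$ under the convention $0\log 0=0$), so Gibbs' inequality turns the inner infimum into the Shannon entropy $H(G_x)$ and $\mathbb{H}$ into the joint entropy $H(X,Z)$; the chain rule in the other direction, $H(X,Z)=H(Z)+H(X\mid Z)$ with the $Z$-marginal pinned to $P^\star$ by the constraint $(\mathcal{C}^\star_r)$, then gives in one stroke both the value $\inf_{(\mathcal{C}^\star_r)}\mathbb{H}=H(P^\star)$ for every $r$ and the characterization of minimizers as deterministic assignments $X=f(Z)$. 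This buys a self-contained, calculation-free proof, at the price of being strictly binary-specific: it exploits that the parametric family exhausts $\mathcal{M}_1(\mathbb{Z})$, which is false in the Gaussian setting and is precisely why $\mathcal{D}^\star_1\not\subset\mathcal{D}^\star_2$ can occur there. Your bookkeeping in Steps 3--4 (arbitrary $G_x$ on zero-weight labels, collapse of the split configuration when $\mu^\star_0\in\{0,1\}$, label permutations for $r\ge 3$) matches the statement exactly, including the strictness of $\mathcal{D}^\star_1\varsubsetneq\mathcal{D}^\star_2$ when $0<\mu^\star_0,\mu^\star_1<1$.
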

In particular  Proposition \ref{entrop:prop:binary} asserts that the sequence of sets $\left( \mathcal{D}^\star_r\right)_{r\ge 1}$ is constant after either rank $r=1$ or $r=2$ depending on the values of $\mu^\star_0$ and $\mu^\star_1$. 
The proof of Proposition \ref{entrop:prop:binary} is detailed in the Supplementary material \cite{dumont:supp:2022}.

\section{Relative entropic order}
\label{entrop:sec:entropic:order}
The property of constancy after a certain rank of the sequence $\left( \mathcal{D}^\star_r\right)_{r\ge 1}$ induced by Proposition \ref{entrop:prop:binary} in the binary case may be extended to the general case:
\begin{theorem}
\label{entrop:th:rank:def}
Assume A\ref{entrop:hyp:theta:compact}-\ref{entrop:hyp:Z:compact}. The sequence $\left( \mathcal{D}^\star_r\right)_{r\ge 1}$ (resp. $\left( \mathcal{D}^n_r\right)_{r\ge 1}$) is constant after a certain rank $r^\star$ (resp. $r^n$). We call this rank the entropic order of $P^\star$ (resp. of $P^n$) relatively to the family $\{g_\theta,\ \theta\in\Theta\}$.
\end{theorem}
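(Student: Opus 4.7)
The plan is in two steps: first, show that the infima $h^\star_r := \inf_{(\mathcal{C}^\star_r)} \mathbb{H}$ and $h^n_r := \inf_{(\mathcal{C}^n_r)} \mathbb{H}$ stabilize; then, show that the associated sets of minimizers stabilize, modulo the embedding of Remark~\ref{entrop:rem:hiddenrdep} and the label-permutation invariance of Remark~\ref{entrop:rem:label:permutation}. By Remark~\ref{entrop:rem:hiddenrdep}, every element of $(\mathcal{C}^\star_r)$ embeds into $(\mathcal{C}^\star_{r+1})$ without changing $\mathbb{H}$, so $r\mapsto h^\star_r$ is non-increasing. Assumption~A\ref{entrop:hyp:density:bounded} gives $|\log g_\theta(z)|\le \log C$ uniformly, whence $\inf_\theta H(G\,||\,g_\theta)\ge -\log C$; combined with the bound $H(\nu)\ge 0$ (in the usual sign convention) this yields $\mathbb{H}\ge -\log C$, so $h^\star_r$ converges to a limit $h^\star_\infty$. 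Set $r^\star := \min\{r\ge 1 : h^\star_r = h^\star_\infty\}$. It remains to prove that $r^\star$ is finite and that, for every $r\ge r^\star$, any element of $\mathcal{D}^\star_r$ is, up to label permutation, the embedding of an element of $\mathcal{D}^\star_{r^\star}$.

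The core tool would be a \emph{merging lemma}: if $(\nu,(G_x)_{x=1}^r)\in\mathcal{D}^\star_r$ and two indices $x_1,x_2$ satisfy $\nu(x_1),\nu(x_2)>0$ with a common optimal parameter $\theta^\star_{x_1}=\theta^\star_{x_2}=\theta^\star$ attaining the infimum in Proposition~\ref{entrop:prop:H:continue}, then replacing these two components by their merge $\tilde\nu=\nu(x_1)+\nu(x_2)$, $\tilde G=(\nu(x_1)G_{x_1}+\nu(x_2)G_{x_2})/\tilde\nu$ strictly decreases $\mathbb{H}$: the cross-entropy contribution is unchanged because $G\mapsto H(G\,||\,g_{\theta^\star})$ is affine in $G$, while $H(\nu)$ strictly decreases by the strict concavity of $x\mapsto -x\log x$. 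Hence at any minimizer the assignment $x\mapsto \theta^\star_x$, restricted to the effective support of $\nu$, must be injective.

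A quantitative version of the same merging argument, combining the continuity $\theta\mapsto g_\theta$ of A\ref{entrop:hyp:density:continuous} with the compactness of $\Theta$ in A\ref{entrop:hyp:theta:compact}, then gives a uniform separation $\varepsilon>0$ such that distinct optimal parameters of any population minimizer are $\varepsilon$-apart in $\Theta$. Since $\Theta$ is compact, only finitely many $\varepsilon$-separated points fit, so the effective order of any element of any $\mathcal{D}^\star_r$ is uniformly bounded by some $N<\infty$. This proves $r^\star\le N$, and the same uniform bound forbids any new minimizer of effective order exceeding $r^\star$ from appearing at any rank $r>r^\star$, which yields the claimed constancy modulo Remark~\ref{entrop:rem:label:permutation}. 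The empirical case is simpler: $P^n$ satisfies A\ref{entrop:hyp:ident:discret}, so Theorem~\ref{entrop:th:phistar} applies and any $\phi\in\Phi^n_r$ is $\{0,1\}$-valued on $\{Z_1,\ldots,Z_n\}$, encoding a partition of these $n$ points into at most $r$ non-empty classes. There are only finitely many such partitions, each using at most $n$ classes, so the set of optimal partitions is the same finite set for every $r\ge n$, giving $r^n\le n$.

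\noindent\textbf{Main obstacle.} The delicate step is the \emph{quantitative} merging argument needed to upgrade the \emph{injectivity} of $x\mapsto\theta^\star_x$ into a uniform separation bound. Components with very small weight $\nu(x)$ yield a small entropic gain from merging, so producing a uniform $\varepsilon$ that is valid at all scales of weight requires a careful trade-off between A\ref{entrop:hyp:density:bounded}, A\ref{entrop:hyp:density:continuous}, and A\ref{entrop:hyp:theta:compact}.
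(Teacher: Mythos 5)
There is a genuine gap, and you have located it yourself: the uniform $\varepsilon$-separation of optimal parameters is asserted but never proved, and without it the finiteness of the effective order --- hence of $r^\star$ --- is not established. Your qualitative merging lemma (identical optimal parameters $\Rightarrow$ merging strictly decreases $\mathbb{H}$) is correct, but upgrading that injectivity to a uniform bound on the number of components via an $\varepsilon$-net in $\Theta$ needs two things you do not supply: total boundedness of $\Theta$ with respect to a metric (A\ref{entrop:hyp:theta:compact} only gives a compact topological space) and the quantitative trade-off you defer to the ``main obstacle''. Note moreover that the worry you state there points in the wrong direction: merging a component of weight $a$ into one of weight $b$ gains at least $a\log\frac{a+b}{a}$ in $H(\nu)$, so the gain \emph{per unit of} $a$ blows up as $a\to 0$, while the cross-entropy cost per unit of $a$ is bounded by $2\log C$ under A\ref{entrop:hyp:density:bounded}. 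Small weights are the easy case, not the hard one. A separate, smaller issue: defining $r^\star:=\min\{r: h^\star_r=h^\star_\infty\}$ does not capture the rank at which the \emph{sets} stabilize --- in the binary case of Proposition \ref{entrop:prop:binary} one has $h^\star_1=h^\star_2$ yet $\mathcal{D}^\star_1\varsubsetneq\mathcal{D}^\star_2$.

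The paper exploits precisely the small-weight observation to bypass parameter separation altogether. Lemma \ref{entrop:lemma:nu1:bounded} first shows that the largest weight of any minimizer satisfies $\nu^r(1)\ge C^{-2}$, by comparison with the trivial decomposition $\left(1,(P^\star)\right)$. Lemma \ref{entrop:lemma:nu:oileffect} then merges an arbitrary component $x_0$ with $\nu^r(x_0)>0$ into component $1$ (irrespective of their optimal parameters) and bounds the resulting change of $\mathbb{H}_\theta$ above by $\nu^r(x_0)\log\left(\frac{\nu^r(x_0)C^2}{C^{-2}+\nu^r(x_0)}\right)$; minimality forces this to be nonnegative, i.e.\ every nonzero weight satisfies $\nu^r(x_0)\ge \frac{1}{C^2(C^2-1)}$. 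Since the weights sum to $1$, every minimizer at every level $r$ has at most $C^2(C^2-1)$ nonzero components, which yields the stabilization of $\left(\mathcal{D}^\star_r\right)_{r\ge1}$ directly and gives the same bound, uniformly in $n$, for $\left(\mathcal{D}^n_r\right)_{r\ge1}$. Your bound $r^n\le n$ via Theorem \ref{entrop:th:phistar} is correct but weaker; the $n$-free bound is what the paper needs later in the proof of Theorem \ref{entrop:th:rn:converges}. To repair your proof, replace the separation argument by this weight lower bound.
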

\begin{remark}
\begin{enumerate}
\item Following Remark \ref{entrop:rem:prop:gauss}, this  constancy of $\left( \mathcal{D}^\star_r\right)_{r\ge r^\star}$ (resp. $\left( \mathcal{D}^n_r\right)_{r\ge r^n}$) means that  for all $r>r^\star$ (resp. $r>r^n$),  and all $\left(\widetilde\nu,\left(\widetilde G_x \right)_{x=1}^r\right) $ in $\mathcal{D}^\star_r$ (resp. $\mathcal{D}^n_r$), $ \widetilde\nu$ contains at least $r-r^\star$ (resp. $r-r^n$) zeros.
\item We will also call $r^n$  the empirical relative entropic order. 
\item We could reformulate Theorem \ref{entrop:th:rank:def} as follow: Define  for all $r \ge 1$ and $n \ge 1$, 
\begin{multline}
\label{entrop:eq:rank:star}
\mbox{rank}^\star(r) :=\max \bigg\{ \mbox{card}\{x|\nu(x) >0\} \mbox{ such that } \\
\mbox{ there exists } (G_x)_{x=1}^r \mbox{ satisfying } \left(\nu ,\left(G _x \right)_{x=1}^r\right)\in \mathcal{D}^\star_r\bigg\}
\end{multline} 
\begin{multline}
\label{entrop:eq:rank:n}
\mbox{rank}^n(r) :=\max \bigg\{ \mbox{card}\{x|\nu(x) >0\} \mbox{ such that } \\
\mbox{ there exists } (G_x)_{x=1}^r \mbox{ satisfying } \left(\nu ,\left(G _x \right)_{x=1}^r\right)\in \mathcal{D}^n_r\bigg\}
\end{multline} 
Then there exist $r^\star$ and $r^n \ge 1$ such that for all $r\ge r^\star$, $\mbox{rank}^\star(r)  = r^\star$ and for all $r\ge r^n$, $\mbox{rank}^n(r)  = r^n$ 
\end{enumerate}

\end{remark}
\begin{proof}
The proof is written for $P^\star$. The same arguments hold for  $P^n$.
For all $r\ge 1$, let $\left(\nu^r,\left(G^r_x \right)_{x=1}^r)\right)$ be in $\mathcal{D}^\star_r$. Since the entropic functions are invariant by permutation of $\mathbb{X}$ one can suppose that, for all $r$, $x\mapsto\nu^r(x)$ is non increasing.  The proof of Theorem \ref{entrop:th:rank:def} relies on two basic lemmas:
\begin{lemma}
\label{entrop:lemma:nu1:bounded}
$\nu^r(1)\ge \frac{1}{C^2}$.
\end{lemma}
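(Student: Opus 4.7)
The strategy is to sandwich the mixing entropy of the optimizer between a computable lower bound (which grows when $\nu^r(1)$ is small) and a dimension-free upper bound coming from a trivial competing decomposition.

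First, I would use the uniform bound on the densities from A\ref{entrop:hyp:density:bounded}: for every $\theta\in\Theta$ and every $z\in\mathbb{Z}$, $|\log g_\theta(z)|\le\log C$. Consequently, for any $G\in\mathcal{M}_1(\mathbb{Z})$, $H(G\,||\,g_\theta)\in[-\log C,\log C]$, hence
\begin{equation*}
\sum_{x=1}^r \nu^r(x)\,\inf_{\theta_x\in\Theta} H(G^r_x\,||\,g_{\theta_x}) \ge -\log C.
\end{equation*}

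Next, I would exhibit a competitor whose mixing entropy is bounded independently of $r$. The trivial decomposition $(1,(P^\star))\in(\mathcal{C}^\star_1)$ satisfies $H(1)=0$ and $\inf_\theta H(P^\star\,||\,g_\theta)\le\log C$, so $\mathbb{H}(1,(P^\star))\le\log C$. By the embedding procedure recalled in Remark \ref{entrop:rem:hiddenrdep} (pad with $r-1$ zero weights and arbitrary distributions), this value is attained in $(\mathcal{C}^\star_r)$ for every $r\ge 1$. By optimality of $(\nu^r,(G^r_x))$ in $\mathcal{D}^\star_r$, we therefore have $\mathbb{H}(\nu^r,(G^r_x))\le\log C$. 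Combining with the previous inequality yields $H(\nu^r)\le 2\log C=\log(C^2)$.

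Finally, I would convert this entropy bound into the desired bound on the largest coordinate. Since $\nu^r$ is arranged in non-increasing order, $\nu^r(x)\le\nu^r(1)$ for every $x$, so
\begin{equation*}
H(\nu^r)=\sum_{x=1}^{r}\nu^r(x)\log\frac{1}{\nu^r(x)}\ge \log\frac{1}{\nu^r(1)}\sum_{x=1}^r\nu^r(x)=-\log\nu^r(1).
\end{equation*}
Combining the two bounds gives $-\log\nu^r(1)\le\log(C^2)$, hence $\nu^r(1)\ge 1/C^2$. There is no real obstacle here; the only substantive ingredients are the two-sided bound on $\log g_\theta$ from A\ref{entrop:hyp:density:bounded} and the $r$-invariance of the competitor value, both of which have already been set up in the paper.
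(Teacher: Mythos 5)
Your proof is correct and is essentially the paper's own argument: both compare the optimal decomposition against the trivial competitor $(1,(P^\star))$ embedded in $(\mathcal{C}^\star_r)$, use A2 to bound the cross-entropy terms by $\pm\log C$, and conclude via $H(\nu^r)\ge-\log\nu^r(1)$ for the non-increasing arrangement. The only difference is cosmetic (you phrase it with cross-entropies and an explicit $2\log C$ bound on $H(\nu^r)$, the paper with suprema of $\mathbb{E}[\log g_\theta]$).
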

\begin{proof}
By definition of  $\left(\nu^r,\left(G^r_x \right)_{x=1}^r)\right)$,
$$\sum_{x=1}^r \nu^r(x) \log(\nu^r(x) ) + \sum_{x=1}^r \nu^r(x)\sup_{\theta_x \in\Theta}  \mathbb{E}_{G^r_x} \left(\log(g_{\theta_x}) \right) \ge \sup_{\theta \in \Theta }  \mathbb{E}_{P^\star} \left(\log(g_{\theta}) \right)$$
Then, by A\ref{entrop:hyp:density:bounded} and since $x\mapsto\nu^r(x)$ is non increasing,
$$ \log(\nu^r(1) ) \ge \sum_{x=1}^r \nu^r(x) \log(\nu^r(x) )  \ge -2\log(C)$$

\end{proof}
\begin{lemma}
\label{entrop:lemma:nu:oileffect}
For all $x\in \{1,\ldots,r\}$, $\nu^r(x) = 0$  or $\nu^r(x)\ge \frac{1}{C^2(C^2-1)}$
\end{lemma}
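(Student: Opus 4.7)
The plan is to prove the lemma by contradiction: assuming some index $x_0$ satisfies $0<\varepsilon:=\nu^r(x_0)<1/(C^2(C^2-1))$, I would construct a competitor in $\mathcal{C}^\star_r$ with strictly smaller mixing entropy, violating the minimality of $(\nu^r,(G^r_x))$. By Lemma~\ref{entrop:lemma:nu1:bounded} and the non-increasing arrangement of $\nu^r$, one has $a:=\nu^r(1)\ge 1/C^2>0$ and $x_0\ne 1$. The natural competitor merges $x_0$ into index~$1$: set $\widetilde\nu(1)=a+\varepsilon$, $\widetilde\nu(x_0)=0$, $\widetilde\nu(x)=\nu^r(x)$ otherwise, together with $\widetilde G_1=(aG^r_1+\varepsilon G^r_{x_0})/(a+\varepsilon)$ and $\widetilde G_x=G^r_x$ for $x\notin\{1,x_0\}$ (the choice of $\widetilde G_{x_0}$ is immaterial since $\widetilde\nu(x_0)=0$). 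A direct check yields $\sum_x\widetilde\nu(x)\widetilde G_x=P^\star$, so $(\widetilde\nu,(\widetilde G_x))\in\mathcal{C}^\star_r$.

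Writing $I(G):=\inf_{\theta\in\Theta} H(G\ ||\ g_\theta)$, I would decompose $\Delta:=\mathbb{H}(\widetilde\nu,\widetilde G)-\mathbb{H}(\nu^r,G^r)$ as $\Delta_1+\Delta_2$, with $\Delta_1=-(a+\varepsilon)\log(a+\varepsilon)+a\log a+\varepsilon\log\varepsilon$ the change in the Shannon entropy of the mixing weights and $\Delta_2=(a+\varepsilon)I(\widetilde G_1)-aI(G^r_1)-\varepsilon I(G^r_{x_0})$ the change in the weighted cross-entropy. Concavity of $t\mapsto -t\log t$ linearized at $t=a$ gives $\Delta_1\le\varepsilon\log(\varepsilon/a)-\varepsilon$. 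For $\Delta_2$, let $\theta^*$ attain $I(G^r_1)$; evaluating the linearity $(a+\varepsilon)H(\widetilde G_1\ ||\ g_\theta)=aH(G^r_1\ ||\ g_\theta)+\varepsilon H(G^r_{x_0}\ ||\ g_\theta)$ at $\theta=\theta^*$ yields $\Delta_2\le\varepsilon[H(G^r_{x_0}\ ||\ g_{\theta^*})-I(G^r_{x_0})]\le 2\varepsilon\log C$ by A\ref{entrop:hyp:density:bounded}.

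Adding the two estimates and invoking $a\ge 1/C^2$ gives a bound of the form $\Delta\le\varepsilon[\log(\varepsilon C^4)-1]$, which is already strictly negative for $\varepsilon$ of order $1/C^4$. A finer accounting -- retaining the exact identity $-\Delta_1=(a+\varepsilon)$ times the binary Shannon entropy of $\varepsilon/(a+\varepsilon)$, and tracking the oscillation bound $H(G^r_{x_0}\ ||\ g_{\theta^*})-I(G^r_{x_0})\le 2\log C$ more carefully -- pins the threshold down to exactly $1/(C^2(C^2-1))$: below that value $\Delta<0$, contradicting the minimality of $(\nu^r,G^r)$. Hence every $\nu^r(x)$ is either zero or at least $1/(C^2(C^2-1))$.

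The main obstacle is the quantitative calibration of the constant: the two competing terms (a gain of order $-\varepsilon\log\varepsilon$ from collapsing the small weight, and a cross-entropy loss of order $\varepsilon\log C$ from no longer optimizing $g_\theta$ against $G^r_{x_0}$) must be balanced sharply to recover the announced $1/(C^2(C^2-1))$ rather than the looser $e/C^4$ produced by the coarse bounds; the structural argument, however, is just the merger construction above.
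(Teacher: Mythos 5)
Your proposal is correct and takes essentially the same route as the paper: merge the small component $x_0$ into component $1$, use Lemma \ref{entrop:lemma:nu1:bounded} to lower-bound $\nu^r(1)$ by $C^{-2}$, and bound the cross-entropy change by $2\varepsilon\log C$ via A\ref{entrop:hyp:density:bounded}. The ``finer accounting'' you defer is exactly the paper's computation: keeping $\Delta_1=a\log\frac{a}{a+\varepsilon}+\varepsilon\log\frac{\varepsilon}{a+\varepsilon}\le\varepsilon\log\frac{\varepsilon}{a+\varepsilon}$ instead of linearizing gives $\Delta\le\varepsilon\log\left(\varepsilon C^2/(C^{-2}+\varepsilon)\right)$, whose nonnegativity is equivalent to $\varepsilon\ge 1/(C^2(C^2-1))$.
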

\begin{proof}
Let $x_0$ such that  $\nu^r(x_0) > 0$. We compare the value of $\mathbb{H}\left(\nu^r,\left(G^r_x \right)_{x=1}^r\right)$ with the configuration consisting in merging $x= x_0$ with $x=1$. Define $\left(\widetilde\nu^r,\left(\widetilde G^r_x \right)_{x=1}^r\right)$ by setting $\widetilde G_1^r = \frac{\nu^r(1)G_1^r+\nu^r(x_0)G_{x_0}^r}{\nu^r(1)+\nu^r(x_0)}$, $\widetilde \nu^r(1) = \nu^r(1)+\nu^r(x_0)$ , $\widetilde\nu^r(x_0) = 0$, and $\left(\widetilde\nu^r_x,\widetilde G_x^r\right) =\left( \nu^r_x,  G_x^r\right) $ for $x\notin \{1,x_0\}$. Let, for all $x\le r$, $\theta_x^r$ be a parameter minimizing $\theta \mapsto H\left(G^r_x \ || g_\theta \right)$. 
Then, by a simple manipulation, of $\mathbb{H}_{\theta}$,

\begin{multline*}
 \mathbb{H}_{\theta} \left(\widetilde\nu^r,\left(\widetilde G^r_x \right)_{x=1}^r\right) - \mathbb{H}_{\theta} \left(\nu^r,\left(G^r_x \right)_{x=1}^r\right)  = \\
\nu^r(x_0) \Bigg(\log\left(\nu^r(x_0)  \right) - \frac{\nu^r(1) }{\nu^r(x_0) }\log\left( 1+ \frac{\nu^r(x_0) }{\nu^r(1)}\right) - \log(\nu^r(1)+\nu^r(x_0)) \\
 + \mathbb{E}_{G^r_{x_0}} \left( \log(g_{\theta_{x_0}^r}) \right) - \mathbb{E}_{\widetilde G^r_{1}} \left( \log(g_{\theta_{1}^r}) \right)  \Bigg)
\end{multline*}

And, by A\ref{entrop:hyp:density:bounded}  and Lemma \ref{entrop:lemma:nu1:bounded},

\begin{equation*}
 \mathbb{H}_{\theta} \left(\widetilde\nu^r,\left(\widetilde G^r_x \right)_{x=1}^r\right) - \mathbb{H}_{\theta} \left(\nu^r,\left(G^r_x \right)_{x=1}^r\right)  \le 
\nu^r(x_0)  \log\left(\frac{\nu^r(x_0)C^2}{C^{-2} +\nu^r(x_0)  }  \right)   
\end{equation*}
 
By definition of $\left(\nu^r,\left(G^r_x \right)_{x=1}^r\right) $ and $\theta$, $\mathbb{H}_{\theta} \left(\widetilde\nu^r,\left(\widetilde G^r_x \right)_{x=1}^r\right) - \mathbb{H}_{\theta} \left(\nu^r,\left(G^r_x \right)_{x=1}^r\right) $ can not be negative which implies, since $\nu^r(x_0)>0$, $\frac{\nu^r(x_0)C^2}{C^{-2} +\nu^r(x_0)  }>1$. This concludes the proof.
\end{proof}

We now achieve the proof of Theorem \ref{entrop:th:rank:def}. Assume, by contradiction, that, for all $r^\star\ge 1$, there exists $r> r^\star$ and $x > r^\star$ such that $\nu^r(x) >0$. Then we can build  a  sequence $(u_r)_{r\ge 1}$ growing to infinity, such that for all $r\ge 1$ $\nu^r(u_r)>0$. 
Notice that, necessarily, $\nu^r(u_r)$ converges towards $0$ as $r$ grows to infinity and thus there exists $r$ such that $0<\nu^r(u_r)<\frac{1}{C^2(C^2-1)}$ which contradicts Lemma \ref{entrop:lemma:nu:oileffect}.
Thus,  there exists $r^\star \ge 1$ such that, for all $r>r^\star$ and all $x>r^\star$, $\nu^r(x) = 0$.
\end{proof}

\begin{remark}
\label{entrop:rem:order:bound}
From Lemma \ref{entrop:lemma:nu:oileffect}, $r^\star$ and $r^n$ are necessarily upper bounded by $C^2(C^2-1)$

\end{remark}

\begin{hypH}
\label{entrop:hyp:unicity:rstar}
For any  $\left(\nu^\star,\left(G^\star_x \right)_{x=1}^{r^\star}\right)$ in $\mathcal{D}^\star_{r^\star}$, for all $x$ in $\{1,\ldots,r^\star\}$, $\nu^\star(x) > 0$.
\end{hypH}
 A\ref{entrop:hyp:unicity:rstar} supposes that, if $r^\star>1$, then $\mathcal{D}^\star_{r^\star - 1} \cap \mathcal{D}^\star_{r^\star} = \emptyset$.  
In particular, Assumption A\ref{entrop:hyp:unicity:rstar} excludes the binary case studied in section \ref{entrop:sec:binary} where $\mathbb{H}$ can be minimized both in $\left(\mathcal{C}^\star_{1}\right)$ and in $\left(\mathcal{C}^\star_{2}\right) \setminus \left(\mathcal{C}^\star_{1}\right)$.

\begin{theorem}
\label{entrop:th:rn:converges}
Assume A\ref{entrop:hyp:theta:compact}-\ref{entrop:hyp:Z:compact}.  Asymptotically, almost surely, $r^n$ does not over estimate $r^\star$. Moreover, if A\ref{entrop:hyp:unicity:rstar} holds, then  almost surely, 
$$\lim\limits_{n \to \infty}r^n = r^\star$$
 
\end{theorem}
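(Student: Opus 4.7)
The plan is to prove the two assertions—$\limsup_n r^n \leq r^\star$ and, under A\ref{entrop:hyp:unicity:rstar}, $\liminf_n r^n \geq r^\star$—by subsequence contradiction, consistently exploiting three tools: the uniform bound $r^n \leq C^2(C^2-1)$ from Remark \ref{entrop:rem:order:bound} (which forces $r^n$ to take only finitely many integer values), the uniform gap $\frac{1}{C^2(C^2-1)}$ between zero and the positive weights from Lemma \ref{entrop:lemma:nu:oileffect}, and the consistency of empirical minimizers from Theorem \ref{entrop:th:Dn:converge} combined with the compactness of $\mathcal{D}_r$ (Proposition \ref{entrop:prop:probas:compacness}). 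Throughout I work on the almost sure event on which Theorem \ref{entrop:th:Dn:converge} holds for every $r \leq C^2(C^2-1)$.

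For the non-overestimation part I would argue by contradiction: suppose on a positive-probability event that $r^{n_k} > r^\star$ along a subsequence. The finite-range property of $r^n$ lets me thin this to a constant value $r^{n_k} \equiv m$ with $m > r^\star$. For each $k$ I pick an element $(\nu^{n_k}, (G^{n_k}_x)_{x=1}^m) \in \mathcal{D}^{n_k}_m$ achieving $\mathrm{rank}^{n_k}(m) = m$, so all $m$ weights are strictly positive and hence bounded below by $\frac{1}{C^2(C^2-1)}$ by Lemma \ref{entrop:lemma:nu:oileffect}. A further weak-$\star$ extraction (by compactness of $\mathcal{D}_m$) produces a limit $(\nu^\infty, (G^\infty_x)_{x=1}^m)$, which lies in $\mathcal{D}^\star_m$ by Theorem \ref{entrop:th:Dn:converge}. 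Since convergence on the finite set $\mathbb{X}$ preserves the lower bound, every $\nu^\infty(x) \geq \frac{1}{C^2(C^2-1)} > 0$, whence $\mathrm{rank}^\star(m) = m > r^\star$, contradicting the definition of $r^\star$.

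For the equality under A\ref{entrop:hyp:unicity:rstar}, it remains to rule out $r^{n_k} < r^\star$ infinitely often. Thinning again to $r^{n_k} \equiv m$ with $m < r^\star$, I pick any $(\nu^{n_k}, (G^{n_k}_x)_{x=1}^{r^\star}) \in \mathcal{D}^{n_k}_{r^\star}$. Since $\mathrm{rank}^{n_k}(r^\star) \leq r^{n_k} = m < r^\star$, each such vector has at least one null coordinate $\nu^{n_k}(x_k) = 0$; pigeonhole on $x_k \in \{1,\ldots,r^\star\}$ gives a further subsequence on which $x_k \equiv x_0$ is fixed. Extracting a weakly convergent subsequence and passing to the limit in the single coordinate $x_0$ produces $(\nu^\infty, (G^\infty_x)_{x=1}^{r^\star}) \in \mathcal{D}^\star_{r^\star}$ with $\nu^\infty(x_0) = 0$, directly contradicting A\ref{entrop:hyp:unicity:rstar}.

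The substantive point in both parts is the interplay between the discrete structure of $\nu$ on $\mathbb{X}$ and the weak-$\star$ convergence of the $G_x$'s: it is precisely the uniform weight gap from Lemma \ref{entrop:lemma:nu:oileffect} that prevents mass from migrating toward or away from zero in the limit, which is what allows the zero/positive pattern of $\nu$ to be transferred from empirical minimizers to $\mathcal{D}^\star_r$ intact. I do not expect a genuine obstacle beyond this; everything else is a routine compactness-plus-pigeonhole extraction made available by Remark \ref{entrop:rem:order:bound} and the subsequential consistency of Theorem \ref{entrop:th:Dn:converge}.
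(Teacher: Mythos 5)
Your proof is correct and follows essentially the same route as the paper's: both directions rest on the uniform weight gap of Lemma \ref{entrop:lemma:nu:oileffect}, the bound of Remark \ref{entrop:rem:order:bound}, and the subsequential consistency of Theorem \ref{entrop:th:Dn:converge} on a compact $\mathcal{D}_r$. The only difference is organizational — you argue by contradiction with a pigeonhole extraction of a constant value of $r^{n_k}$ and of a fixed null coordinate, where the paper works directly at the fixed order $r_0 = \lceil C^2(C^2-1)\rceil + 1$ with sorted weights — but the substance is identical.
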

\begin{proof}
\textit{Asymptotically, $r^n$ does not overestimate $r^\star$: } 
 Using Remark \ref{entrop:rem:order:bound}, let $r_0=  \lceil C^2(C^2-1)\rceil +1$ (where $\lceil \cdot\rceil$ designates the upper whole part), then $r_0\ge r^\star +1$, $r_0\ge r^n+1$ and $\mbox{rank}^\star(r_0)$ and $\mbox{rank}^n(r_0)$ defined by \eqref{entrop:eq:rank:star} and \eqref{entrop:eq:rank:n} satisfy $\mbox{rank}^\star(r_0)= r^\star$ and $\mbox{rank}^n(r_0)= r^n$.

For all $n\ge 1$, let $\left(\nu^n,\left(G^n_x \right)_{x=1}^{r_0}\right)$ in $\mathcal{D}^n_{r_0}$. Assume that $x\mapsto \nu^n(x)$ is decreasing (even if that means permuting the labels $x$).  From Theorem \ref{entrop:th:Dn:converge} and Theorem \ref{entrop:th:rank:def}, if  $\left(\nu^\infty,\left(G^\infty_x \right)_{x=1}^{r_0}\right)$ is a limit of a subsequence   $\left(\nu^{u_n},\left(G^{u_n}_x \right)_{x=1}^{r_0}\right)_{n \ge 1}$, then $\left(\nu^\infty,\left(G^\infty_x \right)_{x=1}^{r_0}\right)$ belongs to $\mathcal{D}^\star$ and, 
$$\mbox{a.s.}\ \lim\limits_{n \to \infty} \nu^{u_n}(r^\star+1)=0 \;,$$
then, by Lemma \ref{entrop:lemma:nu:oileffect},  a.s. $\nu^{u_n}(r^\star+1)=0 $ after a certain rank. This implies that a.s.   there exists $N \ge 1$ such that,  for all $n\ge N$,  $\mbox{rank}^n(r_0) = r^n \le r^\star$. 
 
\textit{Asymptotically, $r^n$ does not underestimate $r^\star$: } 

Let $\left(\nu^n,\left(G^n_x \right)_{x=1}^{r_0}\right)$ in $\mathcal{D}^n_{r_0}$, then any converging subsequence of $\left(\nu^n,\left(G^n_x \right)_{x=1}^{r_0}\right)_{n \ge 1}$ converges in $\mathcal{D}_{r^0}^\star$. However if  A\ref{entrop:hyp:unicity:rstar} holds, then any possible limit  $\left(\nu^\star,\left(G^\star_x \right)_{x=1}^{r_0}\right)$ in $\mathcal{D}_{r^0}^\star$ has exactly $r^\star$ states $x$ such that  $\nu^\star(x) \neq 0$ and, for all $x$, $\lim\limits_{n \to \infty} \nu_n(x) = \nu^\star(x)$. Therefore, $r_n$ can not underestimate $r^\star$ asymptotically.
\end{proof}

\begin{remark} 
Theorem \ref{entrop:th:rn:converges} and Remark \ref{entrop:rem:classif}, insure that the order $r^n$ in the mixing entropy classification method adjusts itself automatically and converges towards the relative order $r^\star$. Unsupervised classification by minimization of the mixing entropy criterion, and, therefore, the classification maximum log-likelihood (see Section \ref{entrop:sec:complete:likelihood} below), are self calibrated methods (adaptive). Unlike classical classification methods such as k-means, the mixing entropy criterion does not encourage to choose the largest number of classes possible.  
\end{remark}
\section{Similarities with the classical mixture models framework }
\label{entrop:sec:simi:mixture:models}
\subsection{Complete likelihood in mixing models}
\label{entrop:sec:complete:likelihood}
In the context of inference in mixing model, if $(Z_1,\ldots,Z_n)$ are observations in $\mathbb{Z}$, the classical MLE, for a given $r\ge 1$ is defined as 

\begin{equation}
\label{entrop:eq:mixing:mle}
\left( \widehat{\nu} , \left(\widehat{\theta_x}\right)_{x=1}^r \right) = \argmax_{(\nu,\theta) \in \mathcal{M}_1\left(\mathbb{X} \right)\times \Theta^r} \log \left(\sum_{x_1=1}^r \ldots \sum_{x_n=1}^r \prod_{k=1}^n \nu(x_k) g_{\theta_{x_k}}\left(Z_k \right) \right) \;.
\end{equation}
Performing the maximization in \eqref{entrop:eq:mixing:mle} is challenging because of the sums appearing inside the $\log$ and methods such as  gradient descent or Expectation-Maximization (EM) algorithm are needed in order to approximate the MLE $\left( \widehat{\nu} , \left(\widehat{\theta_x}\right)_{x=1}^r \right) $.

Now let's focus on the problem of minimization  of the  \textit{complete log-likelihood}, also known as \textit{Classification  log-likelihood \citet{bryant:1991}}, defined as follow: 

\begin{align}
\label{entrop:eq:mixing:complete:likelihood}
\ell_n\left( \left( x_k\right)_{k=1}^n, \nu, \theta \right) :=&  \log\left( \prod_{k=1}^n \nu(x_k) g_{\theta_{x_k}}\left(Z_k \right) \right)\;,\\
=&\sum_{k=1}^n \log\left(  \nu(x_k) \right)  +  \sum_{k=1}^n\log\left( g_{\theta_{x_k}}\left(Z_k \right) \right)\;. \nonumber
\end{align}
 Note that, if $\left( x_k\right)_{k=1}^n$ in $\left\{ 1,\ldots,r\right\}^n$ is set, one can independently  maximize $\ell_n\left( \left( x_k\right)_{k=1}^n, \nu, \theta \right)$ in $\nu$ and $\theta$. In particular, the choice for $\nu$ maximizing $\ell_n\left( \left( x_k\right)_{k=1}^n, \nu, \theta \right)$ is, for all $x$ in $\{1,\ldots,r\}$, $\widehat{\nu}(x) = \frac{1}{n} \sum_{k=1}^n \mathds{1}_x(x_k)$. Therefore, the maximization of the complete log-likelihood  \eqref{entrop:eq:mixing:complete:likelihood} requires the maximization of the function of $\left( x_k\right)_{k=1}^n$:
 
\begin{align}
\ell_{n}\left( \left( x_k\right)_{k=1}^n\right) &:= \sum_{k=1}^n \log\left(  \widehat\nu(x_k) \right)  +  \max_{\theta \in \Theta^r} \sum_{k=1}^n\log\left( g_{\theta_{x_k}}\left(Z_k \right) \right)\;,\nonumber\\
=& n \left[ \sum_{x=1}^r  \widehat\nu(x)\log\left( \widehat\nu(x) \right) + \sum_{x=1}^r \max_{\theta_x \in \Theta}\frac{1}{n}\sum_{k=1}^n \log\left( g_{\theta_{x}}\left(Z_k \right)\right) \mathds{1}_x\left( x_k\right)\right]\;,\nonumber
\end{align}

\begin{remark}
\label{entrop:rem:nz}
Notice that repetitions may occur in the vector  $(Z_1,\ldots,Z_n)$. We  denote by $n_z \ge 1$ the number of $k$ in $\{1,\ldots,n\}$ satisfying $Z_k = z$
\end{remark}
Define  $\phi^{\left( x_k\right)_{k=1}^n}$ in $\Phi_r(\{Z_1,\ldots,Z_n\})$  as:  for all $x$ in $\{1,\ldots,r\}$ and all $z$ in $\{ Z_1,\ldots,Z_n\}$,
$$\phi^{\left( x_k\right)_{k=1}^n}_x(z) =\frac{1}{n_z} \sum_{k\ /\ Z_k = z} \mathds{1}_x\left(x_k\right)\;,$$  
then  
\begin{align}
\ell_{n}\left( \left( x_k\right)_{k=1}^n\right)
=& n \Bigg[ \sum_{x=1}^r  \widehat\nu(x)\log\left( \widehat\nu(x) \right) \nonumber\\
&\quad + \quad \sum_{x=1}^r \max_{\theta_x \in \Theta}\sum_{z\in\{Z_1,\ldots,Z_n\}} \log\left( g_{\theta_{x}}\left(z\right)\right) \left(\frac{n_z}{n}\right)\phi^{\left( x_k\right)_{k=1}^n}_x(z)\Bigg]\;,
\label{entrop:eq:ln:x}
\end{align}

and we recognize, in \eqref{entrop:eq:ln:x}, the mixing entropy of $\phi^{\left( x_k\right)_{k=1}^n}$ : 
\begin{equation}
\label{entrop:eq:ell:phi}
\ell_{n}\left( \left( x_k\right)_{k=1}^n\right)  = - n \cdot \mathbb{H}_{P^n}\left(\phi^{\left( x_k\right)_{k=1}^n}  \right)\;,
\end{equation}
where $$P^n = \sum_{z\in \{Z_1,\ldots,Z_n\}} \left(\frac{n_z}{n} \right) \delta_z\;.$$
Conversely, for every  $\phi$ in $\Phi_r(\{Z_1,\ldots,Z_n\})$, consider $\left(x_1^\phi, \dots, x_n^\phi \right)$ as defined in Remark \ref{entrop:rem:classif}. Then 
\begin{equation}
\label{entrop:eq:phi:ell}
\mathbb{H}_{P^n}\left(\phi \right) = -\frac{1}{n}\ell_{n}\left( \left( x^\phi_k\right)_{k=1}^n\right) \;.
\end{equation}

A consequence of Equations \eqref{entrop:eq:ell:phi} and \eqref{entrop:eq:phi:ell} is that maximizing $\ell_{n}$ in  $\{1,\ldots,r\}^n$ is the same problem as minimizing the entropy $\mathbb{H}_{P^n}$ among $\Phi_r(\{Z_1,\ldots,Z_n\})$. 
 Thus, the minimization of the mixing entropy and the maximization of the classification maximum log-likelihood (CML) of \citet{bryant:1991}  correspond to the exact same problem.

An other consequence is that, thanks to Theorem \ref{entrop:th:rank:def}, there exists $r^n \ge 1$ such that for all $r \ge r^n$, such that the CML satisfies
$$\max_{\left( x_k\right)_{k=1}^n \in \left\{1,\ldots,r \right\}^n} \ell_{n}\left( \left( x_k\right)_{k=1}^n\right) =\max_{\left( x_k\right)_{k=1}^n \in \left\{1,\ldots,r^n \right\}^n} \ell_{n}\left( \left( x_k\right)_{k=1}^n\right)\;.  $$
Moreover, if $\left(Z_1,\ldots,Z_n\right)$ is a an i.i.d. sample of $P^\star$, then Theorem \ref{entrop:th:Dn:converge} implies that  $(r^n)_{n\ge 1}$ is bounded almost surely and if A\ref{entrop:hyp:unicity:rstar} holds, $r^n$ converges almost surely to the entropic order of $P^\star$ relatively to the family   $\left\{g_\theta\;,\;\theta\in\Theta\right\}$.

\subsection{Connection with the EM algorithm}
\label{entrop:sec:EM:algo}
The complete likelihood \eqref{entrop:eq:mixing:complete:likelihood} discussed in Section \ref{entrop:sec:complete:likelihood} appears when implementing the Expectation-Maximization (EM) Algorithm  of \citet{dempster:laird:rubin:1977}. The EM algorithm is an iterative procedure of optimization to approximate the MLE whenever the likelihood takes an integral form which is the case when dealing with mixing models. First introduce the \textit{intermediate quantity}: for all $\nu,\nu'$ in $\mathcal{M}_1(\{1,\ldots,r\})$, and  $\theta,\theta'$ in $\Theta^r$,
\begin{equation*}
Q\left( (\nu,\theta);(\nu' ,\theta')  \right)  := \mathbb{E}_{(\nu' ,\theta') } \left[ \ell_n\left( \left( X_k\right)_{k=1}^n, \nu, \theta \right) | Z_{1},\ldots,Z_n\right]\;,
\end{equation*}
where $\mathbb{E}_{(\nu' ,\theta') } $ is the expectation under the hypothesis that $ (X_k,Z_k)_{k=1}^n$ are i.i.d. with common joint distribution $p(x,\mathrm{d}z) = \nu'(x) g_{\theta'_{x}} (z) \mathrm{d}\lambda z$. Consider an initial parameter value $(\nu^{(0)} ,\theta^{(0)})$.  The EM algorithm consists in repeating the following two step: For all $i \ge 0$, 
\begin{itemize}
\item[\textbf{E-Step}:] Compute $Q\left( (\nu,\theta);(\nu^{(i)} ,\theta^{(i)})  \right) $
\item[\textbf{M-Step}:] Define $(\nu^{(i+1)} ,\theta^{(i+1)})$ as one of the maximizer (provided that it has a sens) of $$(\nu,\theta)\mapsto Q\left( (\nu,\theta);(\nu^{(i)} ,\theta^{(i)})  \right)\;.$$
\end{itemize}
We introduce the shortest notations $ \omega  =(\nu,\theta) $, $x=(x_1,\ldots,x_n) $ in $\{1,\ldots,r\}^n$ and $Z=(Z_1\ldots,Zn)$. 
Then 
\begin{align*}
Q\left( \omega; \omega' \right) = \sum_{x}   p_{\omega'}(x|Z) \log(p_\omega(x,Z)) \;,
\end{align*}
where $p_\omega(x,Z)$ and $  p_{\omega}(x|Z) $  are short notations for 
$p_\omega(x,Z)= \prod_{k=1}^n \nu(x_k)g_{\theta_{}x_k}(Z_k) $  and $p_\omega(x|Z) = p_\omega(x,Z)/\left(\sum_{x'} p_\omega(x',Z) \right)$.  Using these notations, define the  log-likelihood 

$$ \ell(\omega) := \log\left[\sum_x p_\omega(x,Z)  \right]\;,$$

then, the intermediate quantity may be rewritten
\begin{equation}
\label{entrop:eq:intermediate:equality}
Q\left( \omega; \omega' \right)  = \ell(\omega) - H\left( p_{\omega'}(\cdot|Z)\ || \ p_\omega(\cdot|Z) \right) \;.
\end{equation}
 
Relation \eqref{entrop:eq:intermediate:equality} between the intermediate quantity, the objective log-likelihood function $\ell$ and the cross entropy between the conditional distributions provides that, for every $i\ge 0$,

\begin{equation*}
\label{entrop:eq:EM:fonda:equality}
\ell(\omega^{(i+1)}) -\ell(\omega^{(i)})  = Q\left( \omega^{(i+1)};\omega^{(i)}\right) - Q\left( \omega^{(i)};\omega^{(i)}\right) + KL\left( p_{\omega'}(\cdot|Z)\ || \  p_{\omega}(\cdot|Z) \right) 
\end{equation*}
which shows the fundamental inequality of the EM that is

\begin{equation}
\label{entrop:eq:EM:fonda:inequality}
\ell(\omega^{(i+1)}) -\ell(\omega^{(i)})  \ge Q\left( \omega^{(i+1)};\omega^{(i)}\right) - Q\left( \omega^{(i)};\omega^{(i)}\right) \ge 0\;.
\end{equation}
Inequality \eqref{entrop:eq:EM:fonda:inequality} states that the the log-likelihood associated with  the sequence $(\omega^{(i)})_{i\ge 0}$ produced by the EM algorithm ,  $\left(\ell(\omega^{(i)})\right)_{i\ge 0}$ is necessarily non-decreasing.

We now detail the E-Step using the definition \eqref{entrop:eq:mixing:complete:likelihood} of $ \ell_n\left( \left( X_k\right)_{k=1}^n, \nu, \theta \right)$. First  define, for all $x=1,\ldots,r$ and all $z\in \{Z_1,\ldots,Z_n\}$, 

\begin{align}
\phi^{(i)}_x(z) :&= \mathbb{P}_{\omega^{(i)} } \left[ X_1= x \ | \ Z_1=z\right]= \frac{\nu^{(i)}(x) g_{\theta^{(i)}_x}(z)}{\sum_{x' = 1}^r\nu^{(i)}(x') g_{\theta^{(i)}_{x'}}(z)}\;.\label{entrop:eq:phi:EM}
\end{align}
Let $n_z$ be defined as in Remark \ref{entrop:rem:nz}, then

\begin{align}
Q\left( (\nu,\theta);\omega^{(i)}  \right)  =\sum_{x=1}^r \sum_{z \in \{Z_1,\ldots,Z_n\}} n_z  &\phi^{(i)}_x(z) \log (\nu(x)) \label{entrop:eq:Qphi}\\
+  \sum_{x=1}^r \sum_{z \in \{Z_1,\ldots,Z_n\}}n_z  &\phi^{(i)}_x(z) \log (g_{\theta_x}\left(z \right))\;. \nonumber
\end{align}

Using the definition \eqref{entrop:eq:Gx},  let $G_x^{\phi^{(i)}}$ be the probability distribution on $\{Z_1,\ldots,Z_n\}$ define by 

\begin{equation}
\label{entrop:eq:G:phi}
G_x^{\phi^{(i)}} =\frac{  \sum_{z \in \{Z_1,\ldots,Z_n\}} n_z \phi^{(i)}_x(z) \delta_z}{\sum_{z \in \{Z_1,\ldots,Z_n\}} n_z \phi^{(i)}_x(z)}\;,
\end{equation}
where $\delta_z$ is the notation for the Dirac distribution on the singleton $\{z\}$. Then 
\begin{equation}
\label{entrop:Q:H:corresp}
Q\left( (\nu,\theta);\omega^{(i)}  \right)  = -n \mathbb{H}_\theta \left(\nu,\left(G_x^{\phi^{(i)}}\right)_{x=1}^r \right)\;.
\end{equation} 
Moreover, if $P^n = \sum_{z\in \{Z_1,\ldots,Z_n\}}\left( \frac{n_z}{n} \right) \delta_z$, then the EM algorithm also provides a sequence of elements of $\Phi_r(\{Z_1,\ldots,Z_n\})$: $ \left(\phi^{(i)}\right)_{i\ge 0}$ such that, their corresponding mixing entropy satisfies, for all $i\ge 0$, 
\begin{equation}
\label{entrop:eq:H:phi:EM}
\mathbb{H}_{P^n} \left( \phi^{(i)}\right) = -\frac{1}{n}Q\left( \omega^{(i+1)};\omega^{(i)}  \right)\;.
\end{equation} 

Note that, while the sequence of log-likelihood $\left(\ell(\omega^{(i)})\right)_{i\ge 0}$ produced by the EM algorithm  is necessarily non-decreasing, we have no guaranty that the sequence $\left(\mathbb{H}_{P^n} \left( \phi^{(i)}\right)\right)_{i\ge 0} $ is non-increasing. A slight modification of the EM algorithm proposed in  \citet{celeux:1992} will allow us to construct a non-increasing mixing entropy sequence.  

Before presenting this algorithm, we  introduce the following notation: for all $z\in \{Z_1,\ldots,Z_n\}$ and all $x$ in $\{1,\ldots,r\}$, denote

\begin{align}
[\phi]_x^{(i)}(z) &= 1 \mbox{ if } \phi_x^{(i)}(z) = \max_{x'}\phi_{x'}^{(i)}(z) \label{entrop:eq:phicrochet}\\
&= 0 \mbox{ otherwise.} \nonumber
\end{align} 
The computation of $[\phi]_x^{(i)}$ is equivalent to the computation of the maximum a posteriori  (MAP) estimator in the mixture model defined by $\omega^{(i)}$.
\begin{proposition}\label{entrop:phi:class:better} 
For all $i\ge 0$,
\begin{equation*}
\mathbb{H}_{P^n} \left([\phi]^{(i+1)}\right)  \le \mathbb{H}_{P^n} \left( \phi^{(i)}\right) 
\end{equation*}
\end{proposition}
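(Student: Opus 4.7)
\textbf{Proof plan for Proposition \ref{entrop:phi:class:better}.} The idea is to introduce a joint auxiliary functional that is monotone under the two separate operations performed between iterations $i$ and $i+1$ of the CEM algorithm: the M-step (minimization in $\omega$) and the classification E-step (selecting the MAP assignment $[\phi]^{(i+1)}$). Define, for $\phi \in \Phi_r(\{Z_1,\ldots,Z_n\})$ and $\omega = (\nu,\theta)$,
\begin{equation*}
L(\phi,\omega) := \sum_{x=1}^r \sum_{z \in \{Z_1,\ldots,Z_n\}} n_z\, \phi_x(z)\, \log\bigl[\nu(x) g_{\theta_x}(z)\bigr]\;,
\end{equation*}
so that by \eqref{entrop:eq:Qphi} one has $Q((\nu,\theta);\omega^{(i)}) = L(\phi^{(i)},(\nu,\theta))$.

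The first step is to relate $L(\phi,\omega)$ to the mixing entropy $\mathbb{H}_{P^n}(\phi)$. Splitting the $\log$, recognizing $\nu^\phi(x) = \sum_z (n_z/n)\phi_x(z)$, and using the definition of $G_x^\phi$, one checks that
\begin{equation*}
-\tfrac{1}{n} L(\phi,\omega) = \mathbb{H}_\theta\bigl(\nu^\phi,(G_x^\phi)_{x=1}^r\bigr) + \mathrm{KL}(\nu^\phi\,\|\,\nu)\;,
\end{equation*}
which, by non-negativity of the Kullback--Leibler divergence and by definition of $\mathbb{H}$, gives $-\tfrac{1}{n} L(\phi,\omega) \ge \mathbb{H}_{P^n}(\phi)$, with equality precisely when $\nu=\nu^\phi$ and $\theta$ minimizes $\theta\mapsto\mathbb{H}_\theta(\nu^\phi,(G_x^\phi))$. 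In particular,
\begin{equation*}
\mathbb{H}_{P^n}(\phi) = \min_{\omega} \Bigl\{-\tfrac{1}{n} L(\phi,\omega)\Bigr\}\;.
\end{equation*}
Applied to $\phi=\phi^{(i)}$, and noting that the M-step defines $\omega^{(i+1)}$ as a maximizer of $Q(\cdot;\omega^{(i)}) = L(\phi^{(i)},\cdot)$, this yields $\mathbb{H}_{P^n}(\phi^{(i)}) = -\tfrac{1}{n} L(\phi^{(i)},\omega^{(i+1)})$, which is just \eqref{entrop:eq:H:phi:EM}.

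The second step is the monotonicity of $L$ in $\phi$ when $\omega$ is held fixed. Writing
\begin{equation*}
L(\phi,\omega) = \sum_{z} n_z \sum_{x} \phi_x(z)\, \log\bigl[\nu(x) g_{\theta_x}(z)\bigr]\;,
\end{equation*}
and observing that, for each $z$, $(\phi_x(z))_x$ is a probability vector on $\{1,\ldots,r\}$, the inner sum is bounded above by $\max_x \log[\nu(x) g_{\theta_x}(z)]$, with equality when the full mass is put on the maximizing index. By \eqref{entrop:eq:phi:EM} and \eqref{entrop:eq:phicrochet}, $[\phi]^{(i+1)}_x(z)$ is exactly the indicator of the argmax of $x\mapsto \nu^{(i+1)}(x) g_{\theta^{(i+1)}_x}(z)$, hence
\begin{equation*}
L\bigl([\phi]^{(i+1)},\omega^{(i+1)}\bigr) \ge L\bigl(\phi^{(i)},\omega^{(i+1)}\bigr)\;.
\end{equation*}

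Combining the two inequalities finishes the proof:
\begin{equation*}
\mathbb{H}_{P^n}\bigl([\phi]^{(i+1)}\bigr) \le -\tfrac{1}{n} L\bigl([\phi]^{(i+1)},\omega^{(i+1)}\bigr) \le -\tfrac{1}{n} L\bigl(\phi^{(i)},\omega^{(i+1)}\bigr) = \mathbb{H}_{P^n}\bigl(\phi^{(i)}\bigr)\;.
\end{equation*}
The only minor obstacle is bookkeeping when $\nu^{(i+1)}(x)=0$ or when ties occur in the MAP; under A\ref{entrop:hyp:density:bounded} the densities $g_\theta$ are bounded below, and ties can be broken arbitrarily without affecting the argument, so these issues do not alter the conclusion.
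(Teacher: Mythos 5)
Your proposal is correct and follows essentially the same route as the paper's proof: the quantity $-\tfrac{1}{n}L(\phi,\omega^{(i+1)})$ you introduce is exactly the paper's $\mathbb{H}_{\theta^{(i+1)}}\left(\nu^{(i+1)},(G_x^{\phi})_{x=1}^r\right)$, and both arguments chain the same three facts — that this quantity dominates $\mathbb{H}_{P^n}(\phi)$, that it is decreased by moving the per-point mass onto the MAP index, and that at $\phi=\phi^{(i)}$ it equals $\mathbb{H}_{P^n}(\phi^{(i)})$ via \eqref{entrop:Q:H:corresp} and \eqref{entrop:eq:H:phi:EM}. Your explicit identification of the gap as $\mathrm{KL}(\nu^\phi\,\|\,\nu)$ is a slightly more transparent justification of the first inequality, which the paper leaves implicit.
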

\begin{proof}
For all $\phi$ in $\Phi_r$, 
\begin{align*}
\mathbb{H}_{\theta^{(i)}} \left( \nu^{(i+1)}, \left(G_x^{\phi} \right)_{x=1}^r \right) & = - \Bigg[ \sum_{z\in\{Z_1,\ldots,Z_n\}}n_z \sum_{x=1}^r  \phi_x(z) \log \left( \nu^{(i+1)}(x) g_{\theta^{(i+1)}_x}(z) \right)\Bigg]\;.
\end{align*} 
Note that, for all $z$ in $\{Z_1,\ldots,Z_n\}$, $ \sum_{x=1}^r  \phi_x(z) \log \left( \nu^{(i+1)}(x) g_{\theta^{(i+1)}_x}(z) \right) $ is maximized under the constraints  $\sum_{x=1}^r \phi_x(z) = 1$ and $\phi_x(z)\ge 0$   when $\phi$ satisfies: $ \phi_x(z) =1$ if $x$ maximizes $x\mapsto \nu^{(i+1)}(x) g_{\theta^{(i+1)}_x}(z)$, and $ \phi_x(z) = 0$ otherwise, which is when  $\phi =[\phi]^{(i+1)} $.  Then
\begin{multline*}
\mathbb{H}_{P^n} \left([\phi]^{(i+1)}\right)  \le \mathbb{H}_{\theta^{(i+1)}} \left( \nu^{(i+1)}, \left(G_x^{[\phi]^{(i+1)}} \right)_{x=1}^r \right)\\
 \le \mathbb{H}_{\theta^{(i+1)}} \left( \nu^{(i+1)}, \left(G_x^{\phi^{(i)}} \right)_{x=1}^r \right) = \mathbb{H}_{P^n} \left(\phi^{(i)}\right)  
\end{multline*} 
where the last equality comes from \eqref{entrop:Q:H:corresp} and \eqref{entrop:eq:H:phi:EM}.

\end{proof}

Now consider the Classification EM algorithm (CEM), introduced by \citet{celeux:1992}, and rewritten here using the entropy notations (thanks to Equation \eqref{entrop:Q:H:corresp}). The CEM Algorithm is described by Algorithm \ref{entrop:algo:CEM}.
 
\begin{algorithm}
\caption{CEM algorithm of \citet{celeux:1992}}
\label{entrop:algo:CEM}
\begin{algorithmic} 
\REQUIRE  $(\nu^{(0)} ,\theta^{(0)})$ an initial parameter value.
\STATE Repeat the following three steps until convergence
\STATE \textbf{E-Step}: Compute  $\phi^{(i)}$ following \eqref{entrop:eq:phi:EM}.
\STATE \textbf{C-Step}: Compute the MAP estimator $[\phi]^{(i)}$ using \eqref{entrop:eq:phicrochet}.
\STATE \textbf{M-Step}: Define $(\nu^{(i+1)} ,\theta^{(i+1)})$ as one of the minimizer (provided that it has a sens) of $$(\nu,\theta)\mapsto \mathbb{H}_\theta \left(\nu,\left(G_x^{[\phi]^{(i)}}\right)_{x=1}^r \right)\;.$$
\end{algorithmic}
\end{algorithm} 

Proposition \ref{entrop:prop:decreasing:entrop:CEM} is a straightforward generalization of Proposition 2 of \citet{celeux:1992}. Its proof relies on the adaptation of Proposition \ref{entrop:phi:class:better} to the sequence $\left(\phi^{(i)}\right)_{i \ge 0}$ re-defined in Algorithm \ref{entrop:algo:CEM}.
\begin{proposition}[Proposition 2 of \citet{celeux:1992}]
\label{entrop:prop:decreasing:entrop:CEM}
 if A\ref{entrop:hyp:theta:compact}-\ref{entrop:hyp:Z:compact} are satisfied, the sequence $$\left(\mathbb{H}_{P^n} \left([\phi]^{(i)}\right) \right)_{i \ge 0} \;,$$ produced by Algorithm \ref{entrop:algo:CEM}, is non-increasing and converges to a stationary value. Moreover, the sequences $\left([\phi]^{(i)}\right)_{i \ge 0}$ and $\left(\nu^{(i)},\theta^{(i)}\right)_{i \ge 0}$ remain constant after a certain $i_0$.
\end{proposition}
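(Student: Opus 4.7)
The plan is to adapt the monotonicity argument of Proposition~\ref{entrop:phi:class:better} to the CEM trajectory. The main ingredient is the identity (established for the EM in the derivation of~\eqref{entrop:Q:H:corresp}): for any $\phi \in \Phi_r(\{Z_1,\ldots,Z_n\})$ and any $\theta \in \Theta^r$,
\begin{equation*}
\mathbb{H}_\theta\bigl(\nu^\phi,(G_x^\phi)_{x=1}^r\bigr) = -\frac{1}{n} \sum_{z \in \{Z_1,\ldots,Z_n\}} n_z \sum_{x=1}^r \phi_x(z)\log\bigl[\nu^\phi(x)\,g_{\theta_x}(z)\bigr].
\end{equation*}
Once $\omega=(\nu^\phi,\theta)$ is frozen, this is a linear functional of $\phi$, which is exactly what lets the C-step be solved pointwise by the MAP rule~\eqref{entrop:eq:phicrochet}.

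The core one-step bound would be, for every $i \ge 0$,
\begin{equation*}
\mathbb{H}_{P^n}\bigl([\phi]^{(i+1)}\bigr) \;\le\; \mathbb{H}_{\theta^{(i+1)}}\bigl(\nu^{(i+1)},(G_x^{[\phi]^{(i)}})_{x=1}^r\bigr) \;\le\; \mathbb{H}_{P^n}\bigl([\phi]^{(i)}\bigr).
\end{equation*}
The right inequality is immediate from the M-step: $(\nu^{(i+1)},\theta^{(i+1)})$ is a joint minimizer of $(\nu,\theta) \mapsto \mathbb{H}_\theta(\nu,(G_x^{[\phi]^{(i)}}))$, and is therefore bounded above by this quantity evaluated at the specific candidate $(\nu^{[\phi]^{(i)}},\theta^\star)$ with $\theta^\star$ a minimizer of $\theta \mapsto \mathbb{H}_\theta(\nu^{[\phi]^{(i)}},(G_x^{[\phi]^{(i)}}))$; the latter equals $\mathbb{H}_{P^n}([\phi]^{(i)})$ by definition~\eqref{entrop:eq:H:phi}. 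The left inequality is a pointwise argument: by~\eqref{entrop:eq:phicrochet}, $[\phi]^{(i+1)}$ puts mass $1$ on the $x$ maximizing $\nu^{(i+1)}(x) g_{\theta^{(i+1)}_x}(z)$ for each $z$, so it is the pointwise minimizer in $\phi$ of the linear functional $\phi \mapsto -\sum_x \phi_x(z)\log[\nu^{(i+1)}(x) g_{\theta^{(i+1)}_x}(z)]$ on $\Phi_r(\{Z_1,\ldots,Z_n\})$; invoking the identity above at $\phi=[\phi]^{(i+1)}$ and at $\phi=[\phi]^{(i)}$ and comparing delivers the bound. Chaining the two inequalities yields $\mathbb{H}_{P^n}([\phi]^{(i+1)}) \le \mathbb{H}_{P^n}([\phi]^{(i)})$. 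Uniform boundedness below by $-\log r - \log C$ from A\ref{entrop:hyp:density:bounded} then gives convergence to a stationary value.

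For the constancy claim, I would use finiteness of the classification space: since $[\phi]^{(i)}$ is determined by an assignment in $\{1,\ldots,r\}^n$, it lives in a finite subset of $\Phi_r(\{Z_1,\ldots,Z_n\})$, the entropy sequence takes only finitely many values, and being non-increasing, is constant from some rank $i_0$. From that point on, the two inequalities above are equalities, which forces $[\phi]^{(i)}$ to tie $[\phi]^{(i+1)}$ for the C-step argmax under $\omega^{(i+1)}$. Adopting, as in Celeux~(1992), a deterministic tie-breaking rule in the C-step that preserves the previous assignment whenever the pointwise argmax is non-unique, this tie is resolved as $[\phi]^{(i+1)}=[\phi]^{(i)}$, and determinism of the M-step then freezes $(\nu^{(i)},\theta^{(i)})$ for $i \ge i_0+1$.

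The main technical obstacle is the alignment between the $\nu^{(i+1)}$ produced by the joint M-step and the $\nu^\phi$ entering the identity when we plug in $\phi=[\phi]^{(i+1)}$, which is not the same classification as $[\phi]^{(i)}$; this bookkeeping is exactly what the adaptation of Proposition~\ref{entrop:phi:class:better} must carry. A secondary obstacle is the tie-handling needed to upgrade stationarity of the entropy sequence to stationarity of $[\phi]^{(i)}$ and $(\nu^{(i)},\theta^{(i)})$ themselves, resolved by the deterministic convention above.
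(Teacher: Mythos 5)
Your proof is correct and follows exactly the route the paper intends: the paper's ``proof'' is a one-sentence pointer to adapting Proposition \ref{entrop:phi:class:better}, and your two-inequality sandwich (M-step optimality on the right, pointwise MAP optimality of the linear-in-$\phi$ functional on the left) is precisely that adaptation. You in fact supply more than the paper does — the finiteness-of-classifications argument for eventual constancy and the tie-breaking caveat are left implicit in the paper (they come from \citet{celeux:1992}), and your flagging of the $\nu^{(i+1)}$ versus $\nu^{\phi}$ bookkeeping correctly identifies the one notational looseness in the paper's Equation \eqref{entrop:Q:H:corresp}.
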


\section{Practical implementation of  $\mathcal{D}^n_r$} 
\label{entrop:sec:practical:implement}
The empirical version of \eqref{entrop:eq:parallel:phi:nuG} is

\begin{equation*} 
\min_{ \Phi_r(\{Z_1,\ldots,Z_n\})}\mathbb{H}_{P^n}  = \min_{ \left(\mathcal{C}^n_r \right) } \mathbb{H} \;.
\end{equation*} 
We can thus focus  on the practical computation of $\Phi^n_r$ defined by \eqref{entrop:eq:phi:n:r}. From Theorem \ref{entrop:th:phistar}, for all $\phi = (\phi_1,\ldots,\phi_r)$ in  $\Phi^n_r$, for all $x$ in $\{1,\ldots,r\}$ and all $z$ in $\{Z_1,\ldots,Z_n\}$,  $\phi_x(z) = 0$ or  $\phi_x(z) = 1$. 
Thus, the number of potential functions $\phi$ in $\Phi^n_r$ corresponds to the number of possible classifications $(X_k)_{k=1,\ldots,n}$ in $\{1,\ldots,r\}^n$, which grows exponentially with $n$. Thus, the exact of minimization of $\phi \mapsto \mathbb{H}_{P^n}(\phi)$ is a NP-hard problem.

Proposition \ref{entrop:prop:decreasing:entrop:CEM} shows that Algorithm \ref{entrop:algo:CEM} in Section \ref{entrop:sec:EM:algo} provides a non-decreasing sequence of $\left(\mathbb{H}_{P^n} \left([\phi]^{(i)}\right) \right)_{i \ge 0} $. However we showed that the minimum mixing entropy does not decrease when  $r$ exceeds the relative entropic order $r^n$ and Algorithm \ref{entrop:algo:CEM}, that is defined for a given order $r$, does not take that property into account. Moreover, when executing Algorithm \ref{entrop:algo:CEM} with large values of $r$, the  C-step provokes  an  extinction of some classes after the first loops of the algorithm meaning that, for small values $i$ and  some $x$ in $\{1,\ldots,r\}$, $[\phi]^{(i)}_x(z) = 0$, for all $z$ in $\{Z_1,\ldots,Z_n\}$, without giving the opportunity to the EM algorithm to \textit{reorganize} the data.  Algorithm \ref{entrop:algo:phi:build} is an alternative that browses a larger set of $\phi$'s. It runs the sequences $(\phi^{(i)})_{i\ge 0}$ produced by the classical EM algorithm initiated with $N_{init}$ random values $\phi^{(0)}$ rather that initiating with initial parameters $(\nu^{(0)},\theta^{(0)})$ like EM and CEM algorithm do. The considered values for $r$ grow until no improvement is made (after $\mbox{stop}_{r} \ge 1$ increasing values of $r$ without any improvement of the mixing entropy). Finally, exploiting  Proposition \ref{entrop:phi:class:better}, Algorithm \ref{entrop:algo:phi:build} runs the classifier $[\phi]$  in parallel  at each step and tests if the mixing-entropy  decreases or not.
\begin{algorithm}
\caption{Pseudo-code for the construction of $\widehat{\phi}$}
\label{entrop:algo:phi:build}
\begin{algorithmic} 
\REQUIRE $N_{init}\ge 1$ $\mbox{stop}_{em} \geq 1$, $\mbox{stop}_{r} \geq 1$, 
\STATE $\mbox{ind}_{em} = 1$, $\mbox{ind}_r= 1$,
\STATE $\widehat{H} = \infty$, 
\STATE $r=1$, $\mbox{ind}_r = 0$,
\WHILE{$\mbox{ind}_r<\mbox{stop}_{r}$}
	\STATE  $\mbox{ind}_{r} 	= \mbox{ind}_{r} + 1$,
	\FOR{$\mbox{init}$ in $\{1,\ldots,N_{init}\}$}
		\STATE randomly initialize $\phi^{(0)}_x(z)$, $x=1,\ldots,r$, $z \in \{Z_1,\ldots,Z_n\}$  in $ \Phi_r(\{Z_1,\ldots,Z_n\})$,
		\STATE $i=0$, $\mbox{ind}_{em} = 0$,
		\WHILE{$\mbox{ind}_{em}<\mbox{stop}_{em}$}
			\STATE $\mbox{ind}_{em} = \mbox{ind}_{em}+1$,
			\STATE define $\omega_{i+1} = \argmin_{\omega =(\nu,\theta)}\mathbb{H}_\theta \left(\nu,\left(G_x^{\phi^{(i)}}\right)_{x=1}^r \right)$ using  \eqref{entrop:eq:G:phi},
			\STATE define $\phi^{(i+1)} $ using \eqref{entrop:eq:phi:EM} and   $[\phi]^{(i+1)}$ using \eqref{entrop:eq:phicrochet},
			\STATE calculate $[H]=\mathbb{H}_{P^n} \left([\phi]^{(i+1) }\right)$,
			\IF{$[H]<\widehat{H}$}
				\STATE Update:
				\begin{equation*}
				\widehat{H}  = [H] \;,\ 
				\widehat{\phi} = [\phi]^{(i+1)} \;.
				\end{equation*}
				\STATE Reset: $\mbox{ind}_{em} 	= 0$ and $\mbox{ind}_{r} 	= 0$.
				
			\ENDIF 
			\STATE $i=i+1$,
		\ENDWHILE
	\ENDFOR
	\STATE $r = r+1$.
\ENDWHILE 
\end{algorithmic}
\end{algorithm} 
\section{Illustration with synthetic data}
\label{entrop:sec:numerical} 
In this section we will execute Algorithm \ref{entrop:algo:phi:build} on synthetic data. We do not intend to provide an exhaustive analysis of the performance of this method. Our purpose is to illustrate the results discussed through out the paper. 

We choose for the underlying distribution $P^{\star}$  of the synthetic data a Gaussian mixture distributions with various order and parameters. We also perform our classification relatively to two classes of densities: the classical Gaussian densities: 

\begin{align}
\label{entrop:eq:gauss:density}
g_{\theta} (z) = \frac{1}{\sqrt{2\pi\sigma^2} }\exp \left( -\frac{(z-\mu)^2}{2\sigma^2}\right) \;, \; \theta =(\mu,\sigma) \in \mathbb{R}\times ]0,+\infty[
\end{align}
and the \textit{bi-sided, asymmetrical exponential} densities defined as:
\begin{equation}
\label{entrop:eq:biexp:density}
g_{\theta} (z) = p  \lambda_R \exp \left( - \lambda_R \left(Z-A_R \right)\right)  \mathds{1}_{z\ge A_R} 
+ (1-p)  \lambda_L \exp \left( - \lambda_L \left(A_L - Z \right)\right) \mathds{1}_{z\le A_L} 
\end{equation}
where  $ p \in [\alpha,1-\alpha]$, $ -\infty<A_L\le A_R <\infty $ and $\lambda_L,\lambda_R>0$. An illustration of such a density is provided in Figure \ref{entrop:fig:expBiSideded:illustr}. 

\begin{figure}
\begin{center}
\includegraphics[width=150pt]{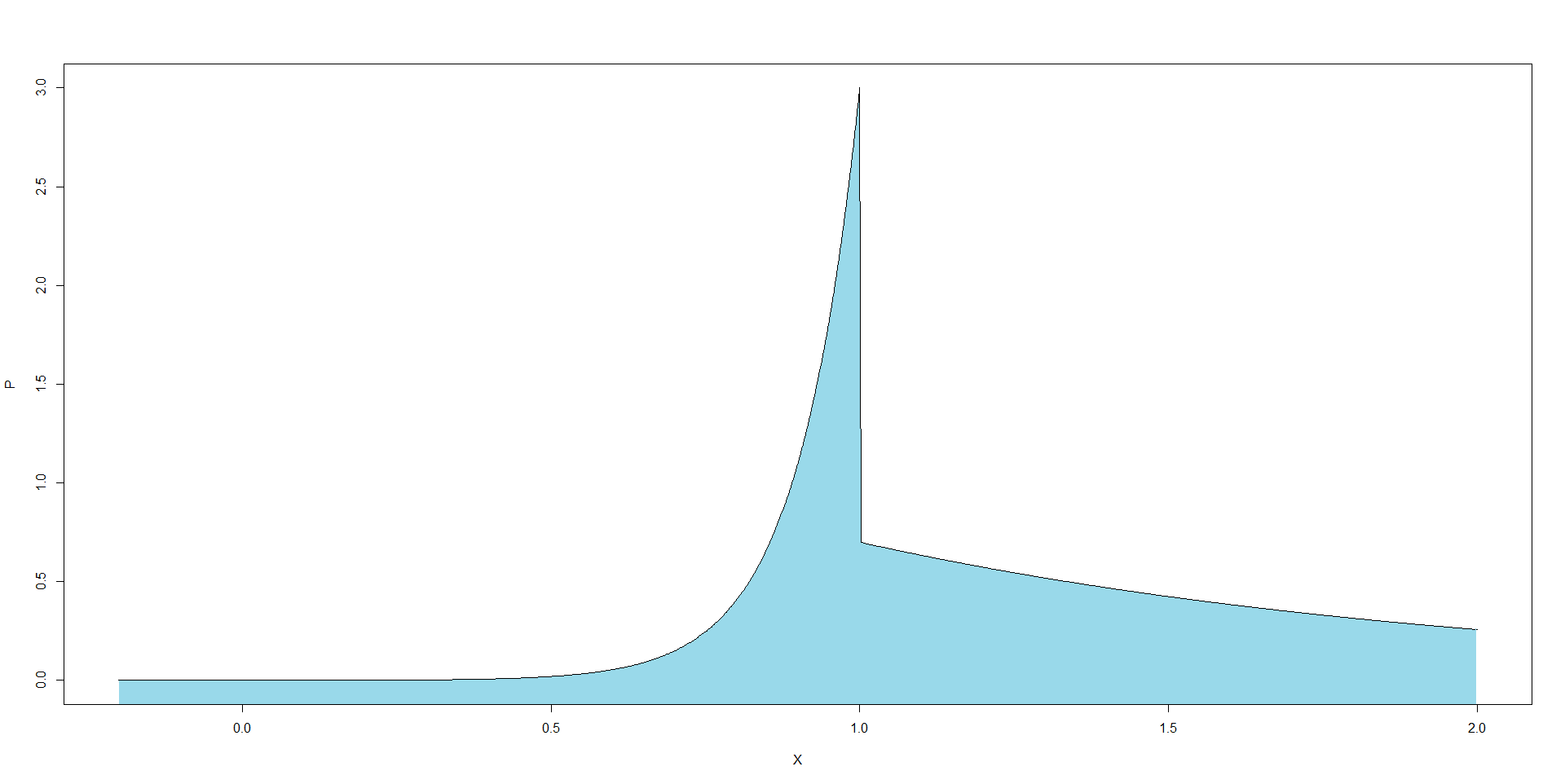}
\includegraphics[width=150pt]{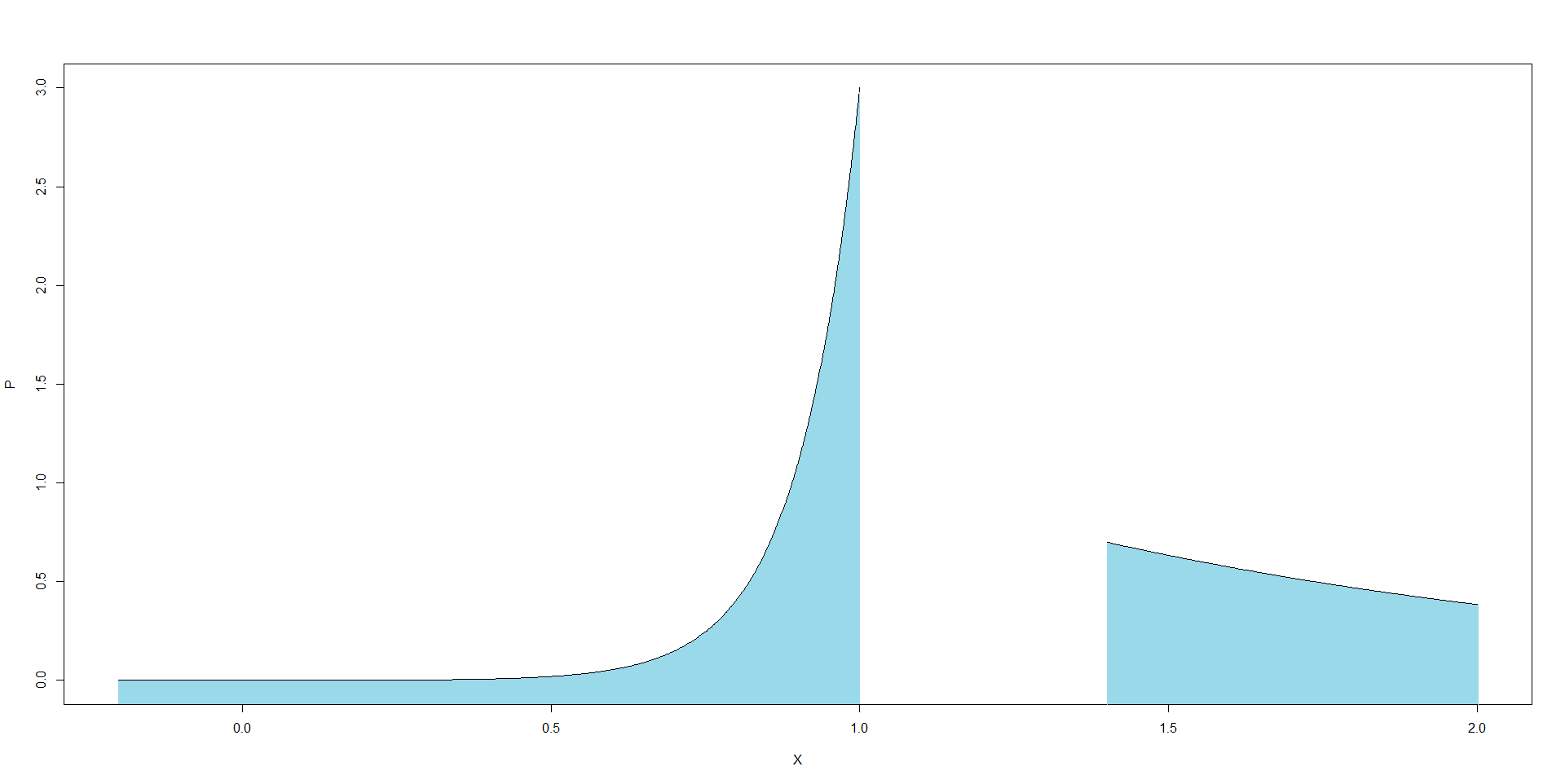}
\caption{Representation of a bi-sided, asymmetrical exponential density. For both graphics, $p = 0.7$,  $A_L = 1$, $\lambda_R = 1$, $\lambda_L = 10$. On the graphic on the left hand side $A_R = 1$, and on the right hand side $A_R = 1.2$.}
\label{entrop:fig:expBiSideded:illustr}
\end{center}
\end{figure}
\begin{remark}
\begin{enumerate}
\item Following Remark \ref{entrop:rem:gaussian:setting}, neither the Gaussian family nor the bi-sided exponential familly satisfies Assumptions A\ref{entrop:hyp:density:bounded} and $\mathbb{R}$ is not a compact metric space. The minimization of $\mathbb{H}\left( \nu,\left( G_x\right)_{x=1}^r\right)$, on $\left(\mathcal{C}^n_r \right)$, with $r\ge 2$, gives $\min_{\left(\mathcal{C}^n_r \right)} \mathbb{H} = - \infty$. Indeed, if we concentrate $G_1$ on only one value of $z$ by, for instance, taking $\nu(1) = \frac{1}{n}$ and $G_1 = \delta_{Z_1}$. Then considering $\mu_1 = Z_1$ and $\sigma_1 \to 0$ in the Gaussian setting or $A_R =Z_1$ and $\lambda_R \to +\infty$ in the  bi-sided, asymmetrical exponential setting  gives  $\mathbb{H} \left( \nu,\left( G_x\right)_{x=1}^r\right) = - \infty$. It is conceivable to restrict $\left\{g_\theta\ , \ \theta \in \Theta\right\}$ by bounding the parameter sets in order to avoid such behavior. However, when running Algorithm \ref{entrop:algo:phi:build}, such concentration phenomenon do not appear before Algorithm \ref{entrop:algo:phi:build} stops, and no such restrictions were needed to obtain the practical results in this section.
\item We could consider any other class of density. For instance, on the same model, we could define bi-sided, asymmetrical Gaussian densities. The choice of exponential is here arbitrary. 
\item The restriction $ p \in [\alpha,1-\alpha]$ (we choose $\alpha$ very small in practice)  is made to avoid the phenomenon  discussed in Section \ref{entrop:sec:binary}, that is $\mathcal{D}^\star_{r^\star-1} \subset\mathcal{D}^\star_{r^\star}$: if $$\theta^\star = ( p^\star,A^\star_L, A_R^\star,\lambda^\star_L,\lambda^\star_R)$$
if $\theta^\star_1 = ( 1 ,-, A_R^\star,-,\lambda^\star_R)$ and $\theta^\star_2 = ( 0,A^\star_L, -,\lambda^\star_L,-)$
($p=1$ in $\theta^\star_1$ and $p=0$ in $\theta^\star_2$ ), if
 $P^\star = g_{\theta^\star} (z) \mathrm{d}z$, then $\mathbb{H} \left( (1),\left(P^\star\right)\right) = \mathbb{H} \left( (p^\star, 1-p^\star ), \left(g_{\theta^\star_1},g_{\theta^\star_2}\right)\right)$,
\end{enumerate}
and the two mixture decompositions with different orders provide the same mixing entropy  and  Assumption \ref{entrop:hyp:unicity:rstar} can not be satisfied.
\end{remark}

First start with the application of Algorithm \ref{entrop:algo:phi:build}, when the synthetic data is generated as an i.i.d. sample of size $n=10000$ on $\mathbb{R}$ of $\mathrm{d}P^\star(z) =p^\star(z)\mathrm{d}z $ with 
\begin{equation}
\label{entrop:eq:Pstar:r2}
p^\star(z) = \frac{1}{2} g_{0,1}(z) +\frac{1}{2} g_{\mu^\star,1}(z) \;,
\end{equation}
where $\mu^\star>0$ and $g_\theta$ is given by \eqref{entrop:eq:gauss:density}. 
 
\begin{table}
\begin{center}
\begin{tabular}{|c|c|c|}
\hline 
$\mu^\star$ & Gaussian Family \eqref{entrop:eq:gauss:density} & Exponential family \eqref{entrop:eq:biexp:density} - $\alpha=0.005$ \\
\hline 
$0.70\cdot(2 \sqrt{3})$ & 1 & 1 \\ 
$0.75\cdot(2 \sqrt{3})$ & 1 & 2 \\ 
$0.9\cdot(2 \sqrt{3})$ & 1 & 2 \\ 
$2 \sqrt{3}$& 1 & 2 \\ 
$1.1\cdot(2 \sqrt{3})$ & 2 & 2 \\
\hline 
\end{tabular}
\caption{Values of $r^n$ for different values of $\mu^\star$ in the case  \eqref{entrop:eq:Pstar:r2}}
\label{entrop:table:case:r2}
\end{center}
\end{table} 

\begin{figure}
\begin{center}
\includegraphics[width=110pt]{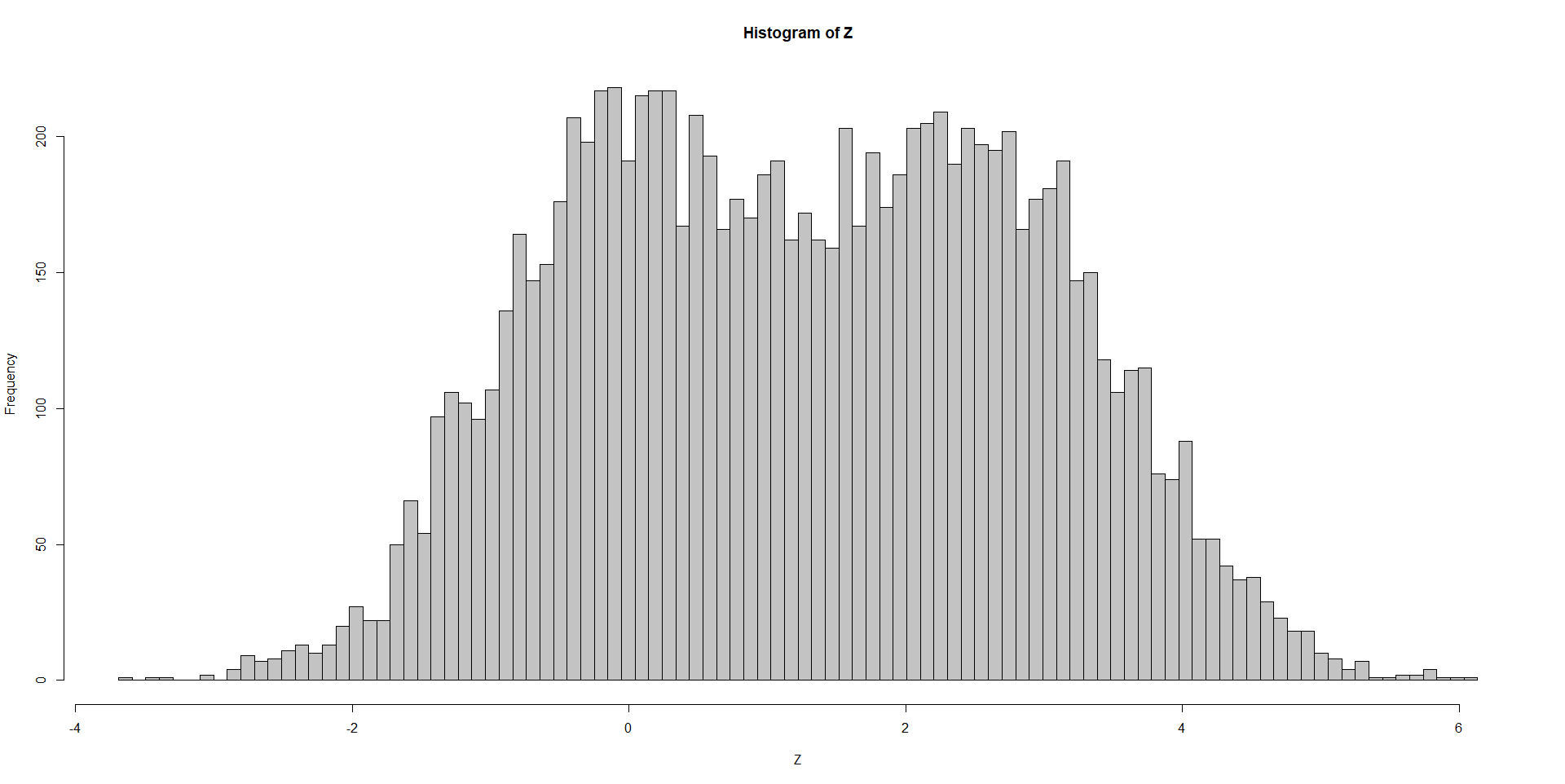} 
\includegraphics[width=110pt]{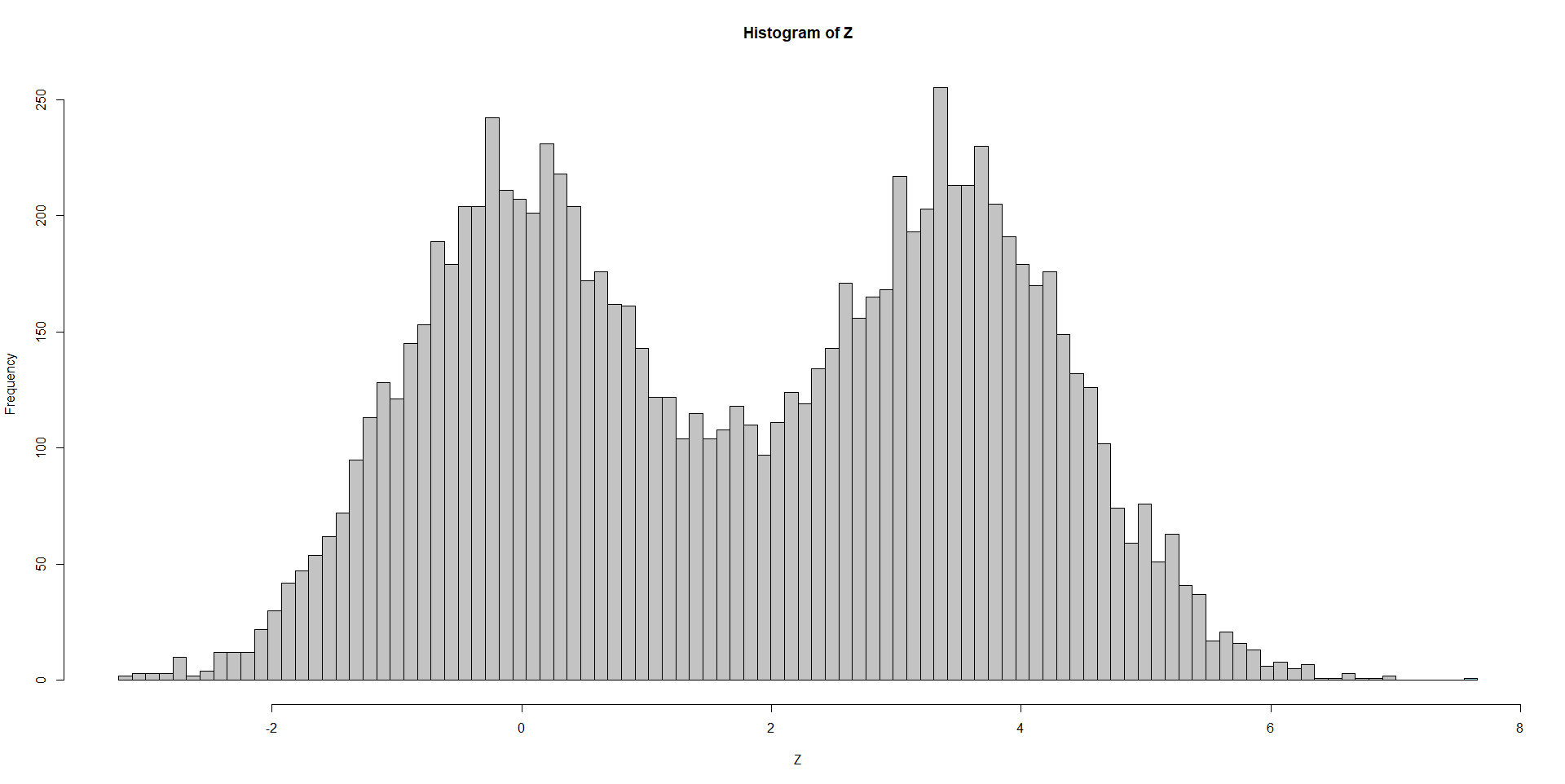} 
\includegraphics[width=110pt]{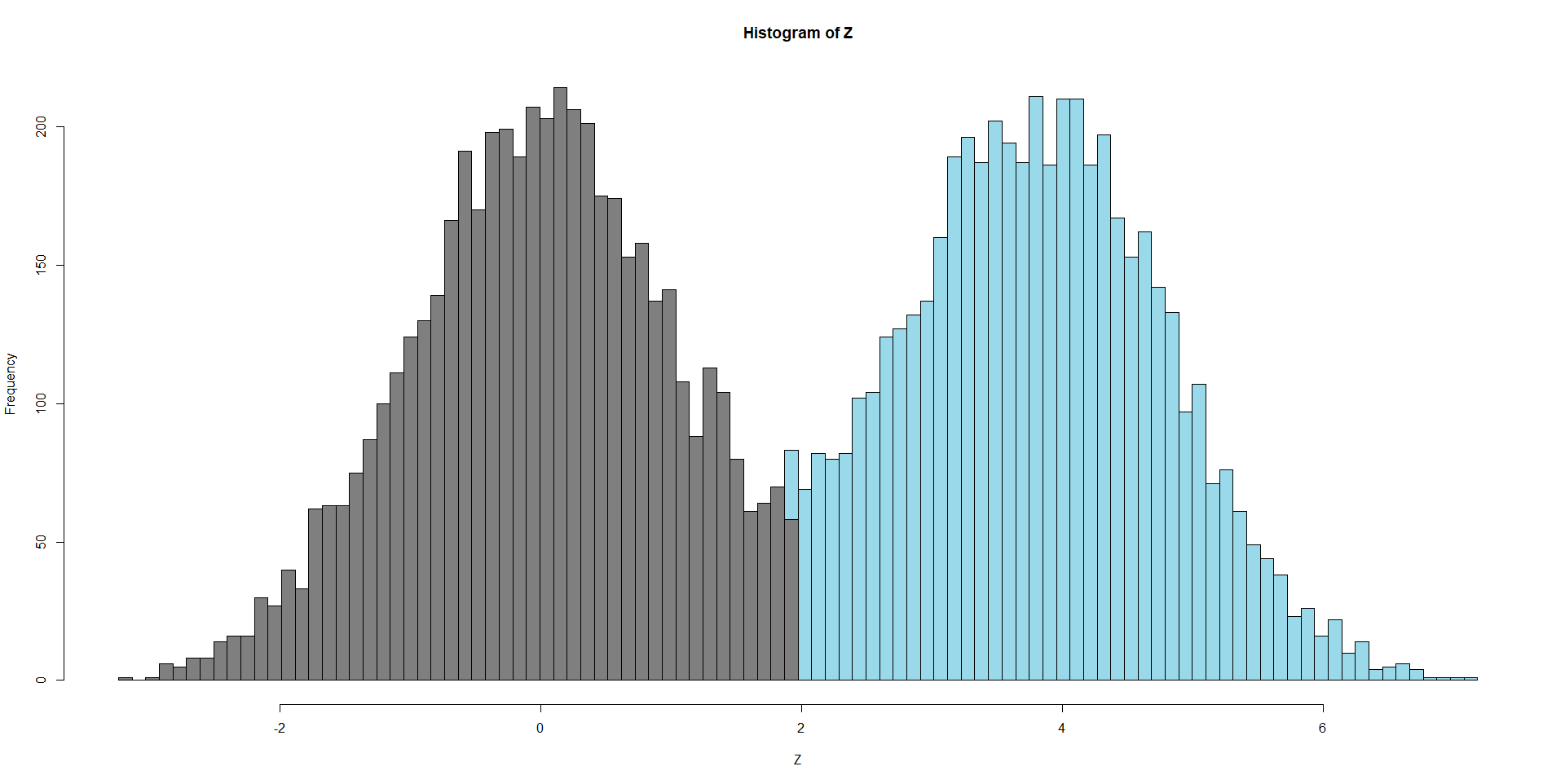} \\
\includegraphics[width=110pt]{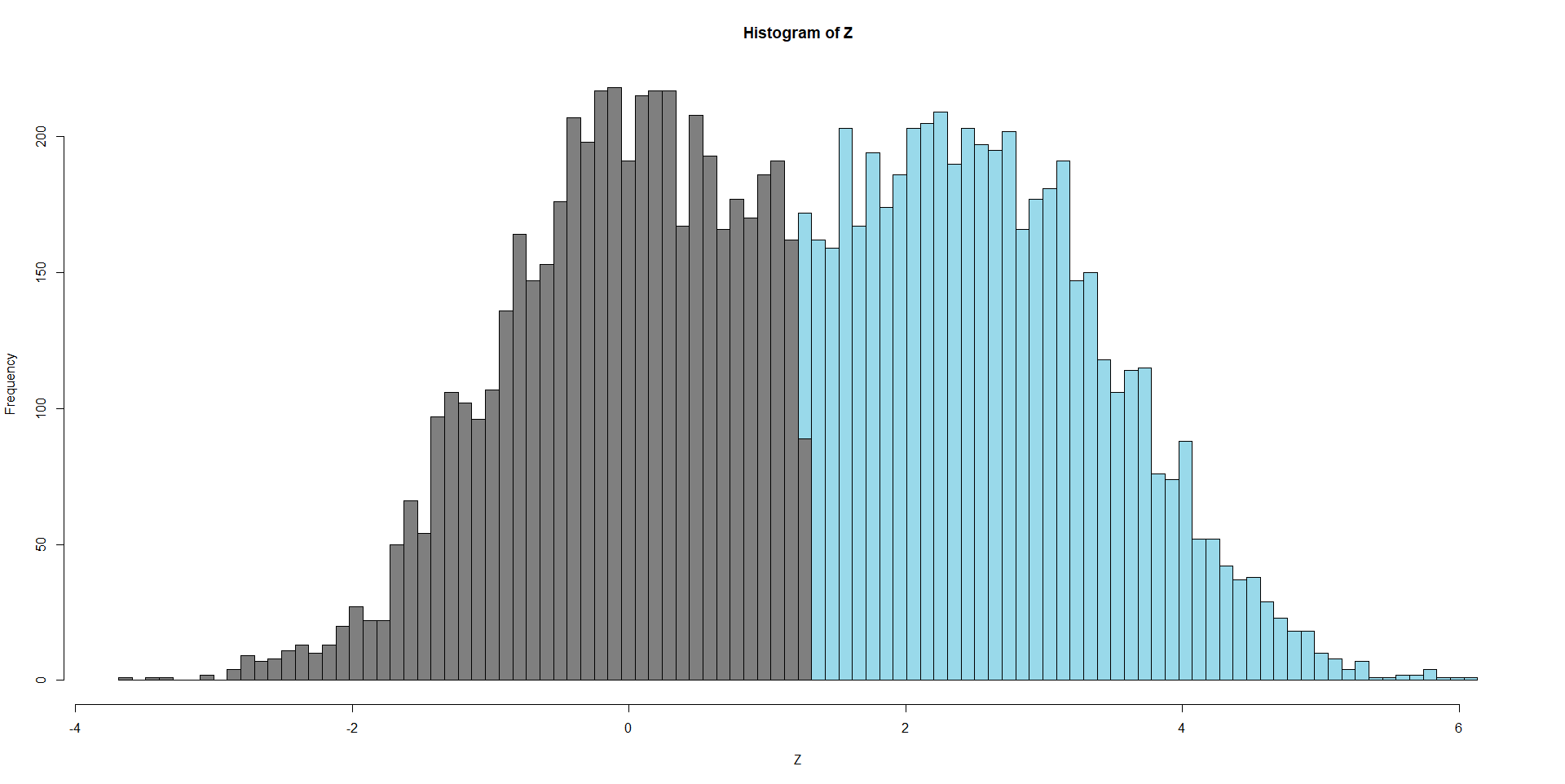} 
\includegraphics[width=110pt]{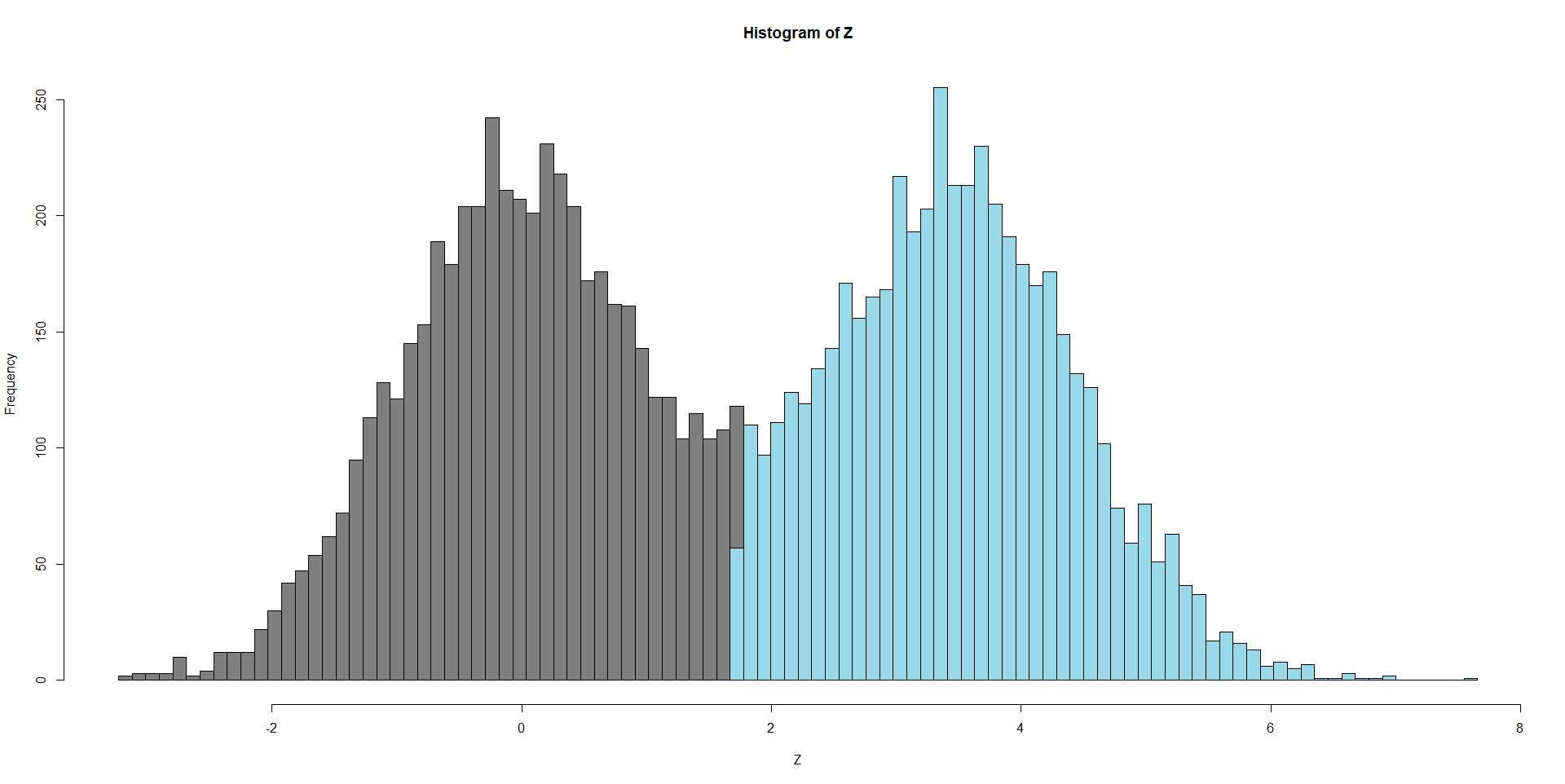} 
\includegraphics[width=110pt]{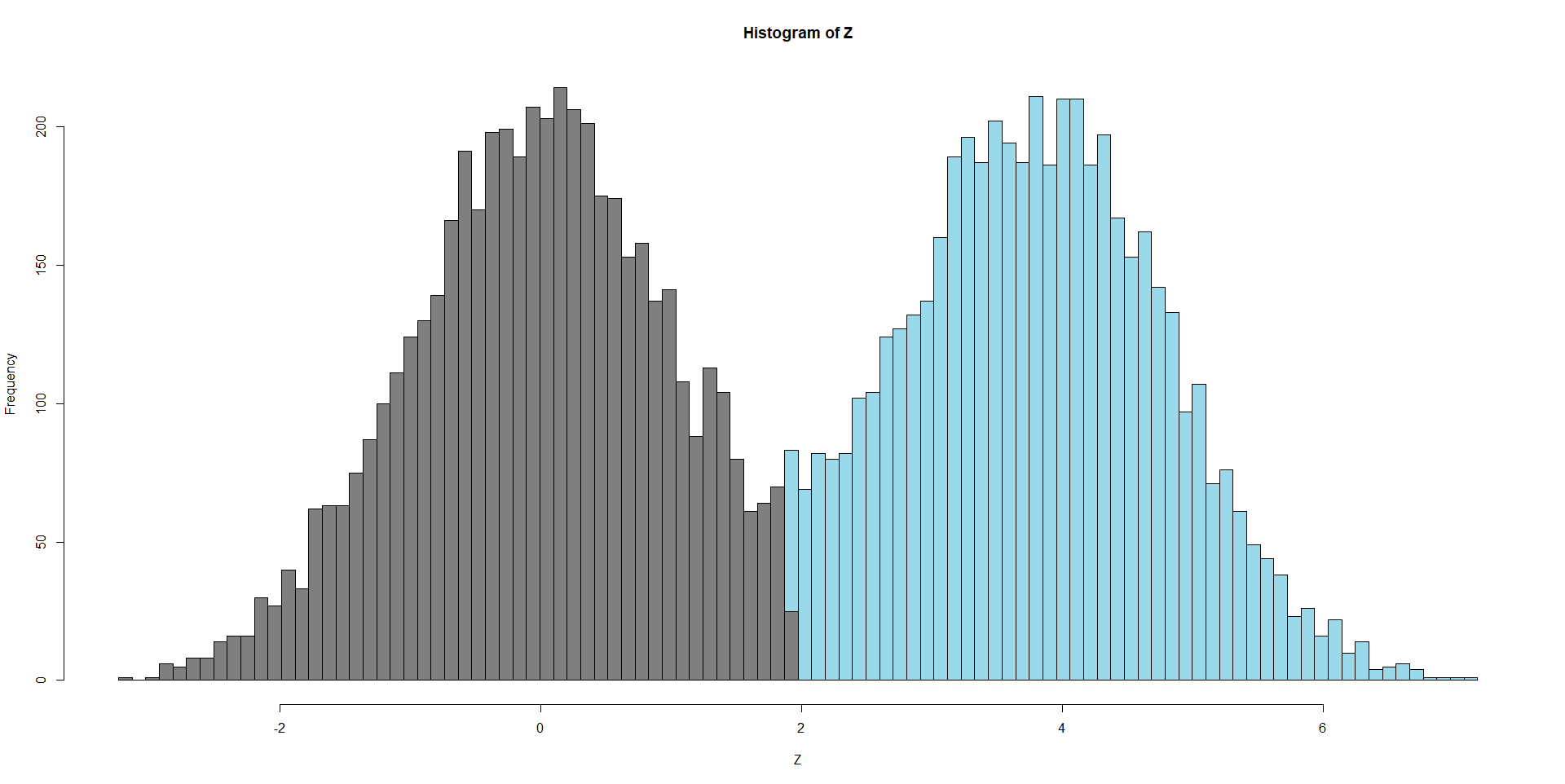} 
\caption{
Histograms of the data regrouped by class  (corresponding to Table \ref{entrop:table:case:r2}) with, $\mu^\star = 0.75*(2 \sqrt{3})$ (Left),  $\mu^\star = 2 \sqrt{3}$ (Middle), $\mu^\star = 1.1*(2 \sqrt{3})$ (Right), relatively to the Gaussian family  \eqref{entrop:eq:biexp:density} (Top) and the exponential family \eqref{entrop:eq:biexp:density} (Bottom) }
\label{entrop:fig:case:r2:illustr}
\end{center}
\end{figure}

Table \ref{entrop:table:case:r2} and Figure \ref{entrop:fig:case:r2:illustr} represent the results obtained for different values of the only free parameter in this case : $\mu^\star$. We observe that, when dealing with the Gaussian family, the threshold between the cases $r^n= 1$ and $r^n =2$ occurs somewhere near the theoretical threshold obtained in \eqref{entrop:eq:gauss:ident:mu}: $\mu^\star = 2 \sqrt{3}$, whereas the threshold is smaller when using the bi-sided asymmetrical  exponential family. This may be explained by the richness of the bi-sided asymmetrical  exponential family compared with the classical, symmetrical Gaussian family.

We confirm this observation with  the second application of Algorithm \ref{entrop:algo:phi:build} where we assume that $p^\star(z)$ is given by 

\begin{equation}
\label{entrop:eq:Pstar:r7}
p^\star(z) = \frac{1}{7} \sum_{x=1}^7  g_{(x-1)\cdot\mu^\star,1}(z) = \frac{1}{7} \left( g_{0,1 }(z) +  g_{\mu^\star,1 }(z) +\ldots + g_{6\mu^\star ,1}(z)  \right)\;,
\end{equation}
where $\mu^\star>0$ and $g_\theta$ is given by \eqref{entrop:eq:gauss:density}. 
\begin{table}
\begin{center}
\begin{tabular}{|c|c|c|}
\hline 
$\mu^\star$ & Gaussian Family \eqref{entrop:eq:gauss:density} & Exponential family \eqref{entrop:eq:biexp:density} - $\alpha=0.005$\\
\hline 
$0.50\cdot(2 \sqrt{3})$ & 1 & 1 \\ 
$0.60\cdot(2 \sqrt{3})$ & 1 & 3 \\ 
$0.70\cdot(2 \sqrt{3})$ & 1 & 7 \\   
$2 \sqrt{3}$& 7 & 7 \\  
\hline 
\end{tabular}
\caption{Values of $r^n$  for different values of $\mu^\star$ in the case  \eqref{entrop:eq:Pstar:r7}}
\label{entrop:table:case:r7}
\end{center}
\end{table} 

\begin{figure}
\begin{center}
\includegraphics[width=140pt]{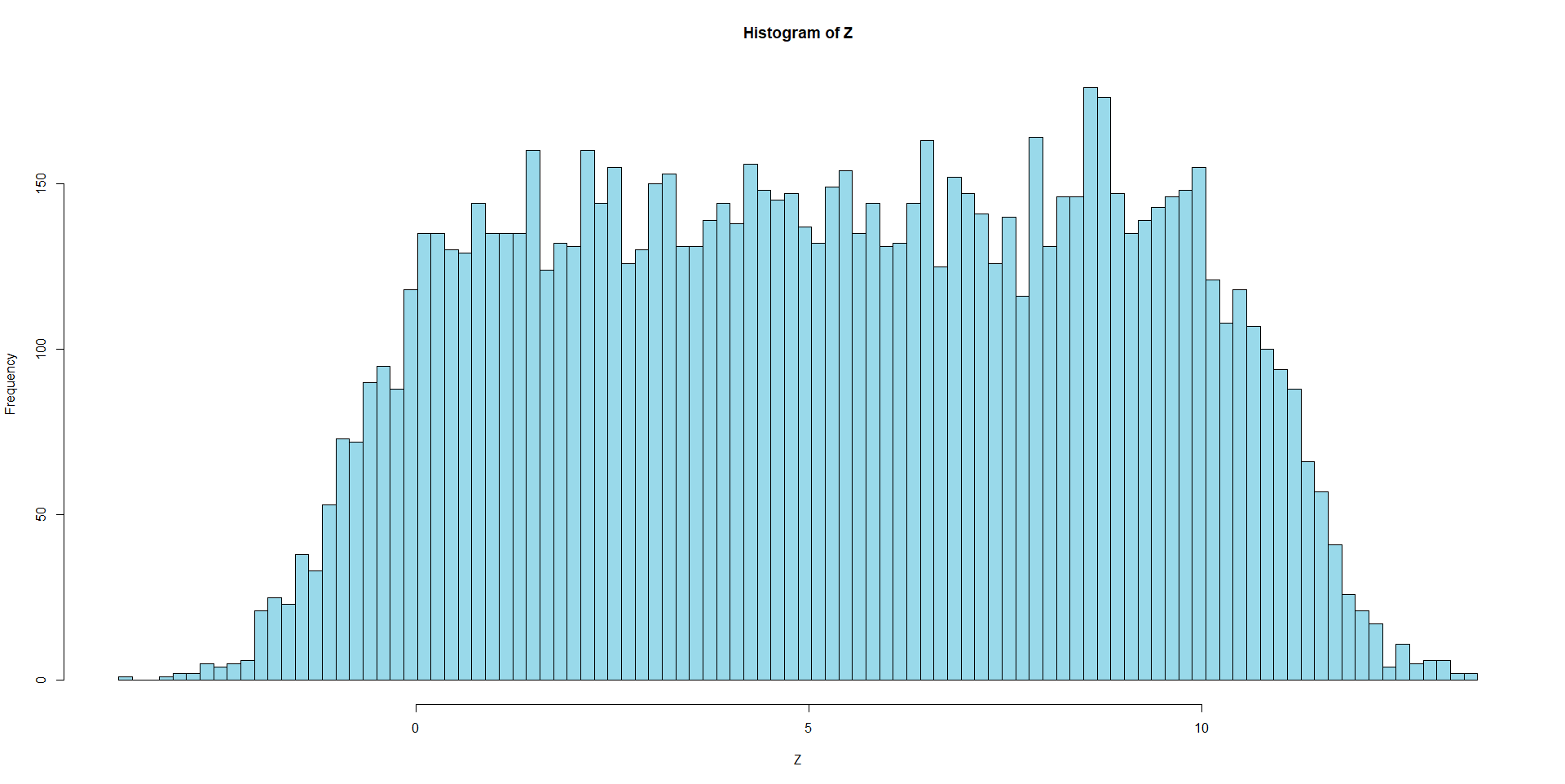} 
\includegraphics[width=140pt]{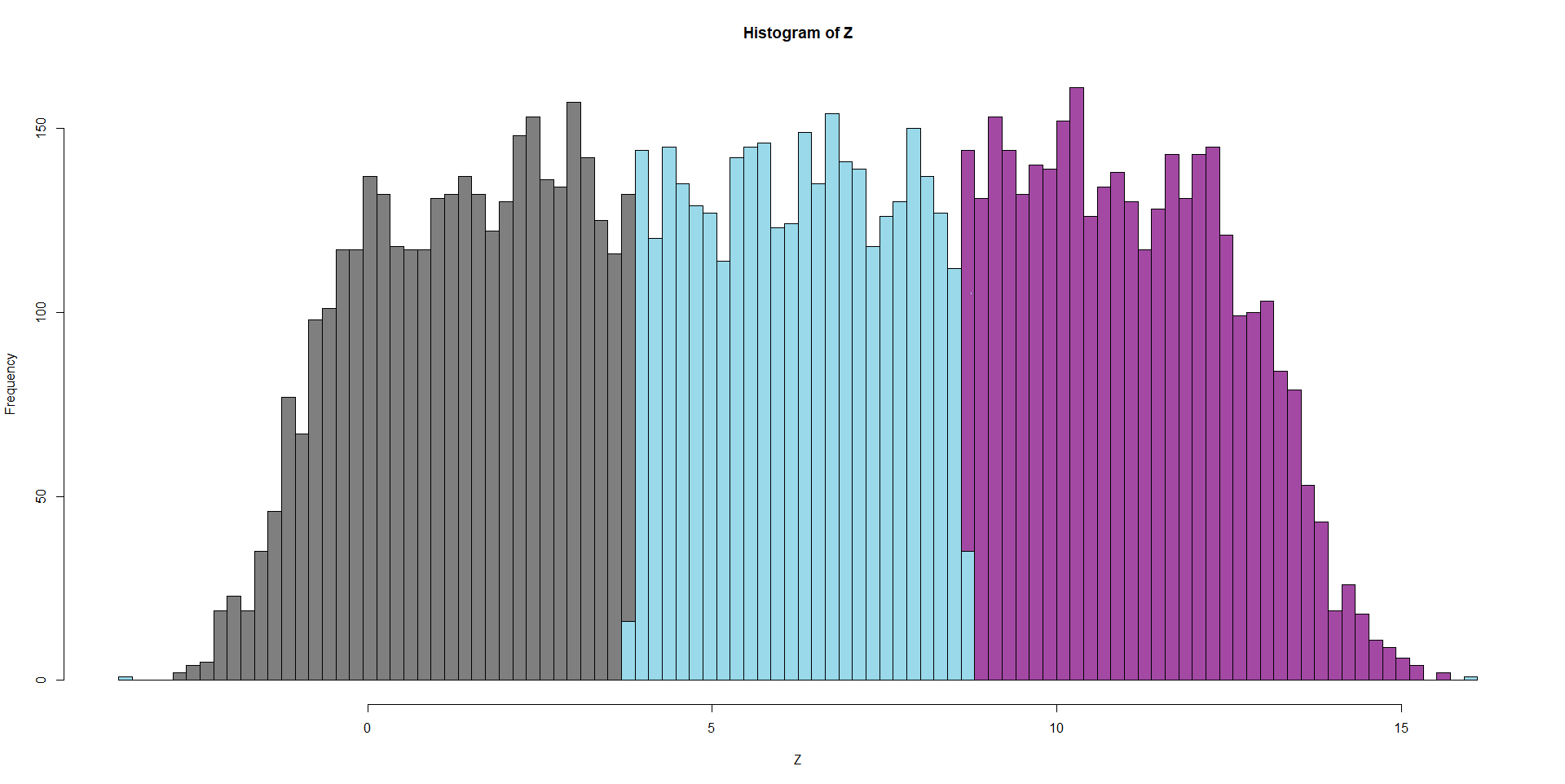} \\
\includegraphics[width=140pt]{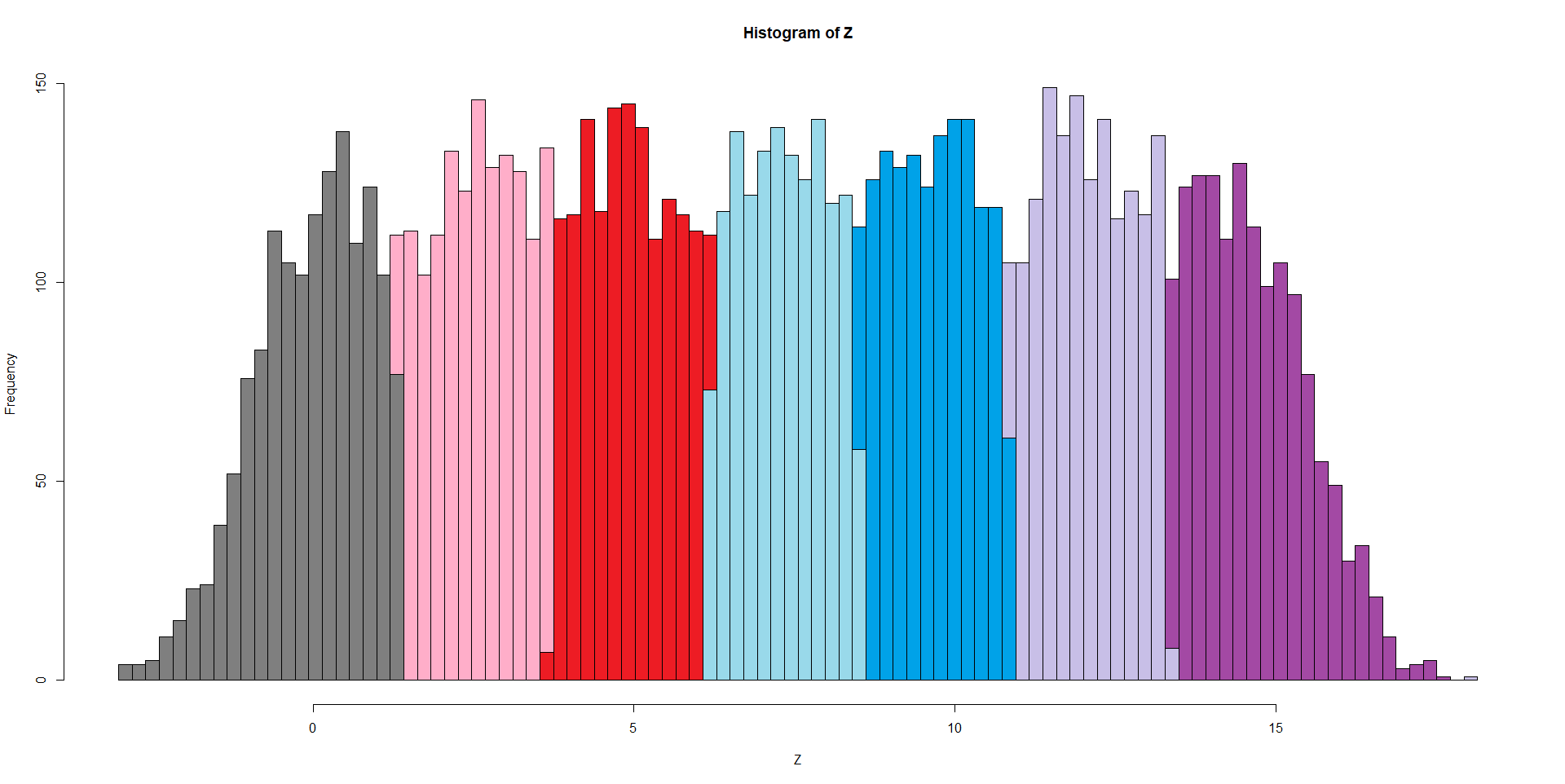}
\includegraphics[width=140pt]{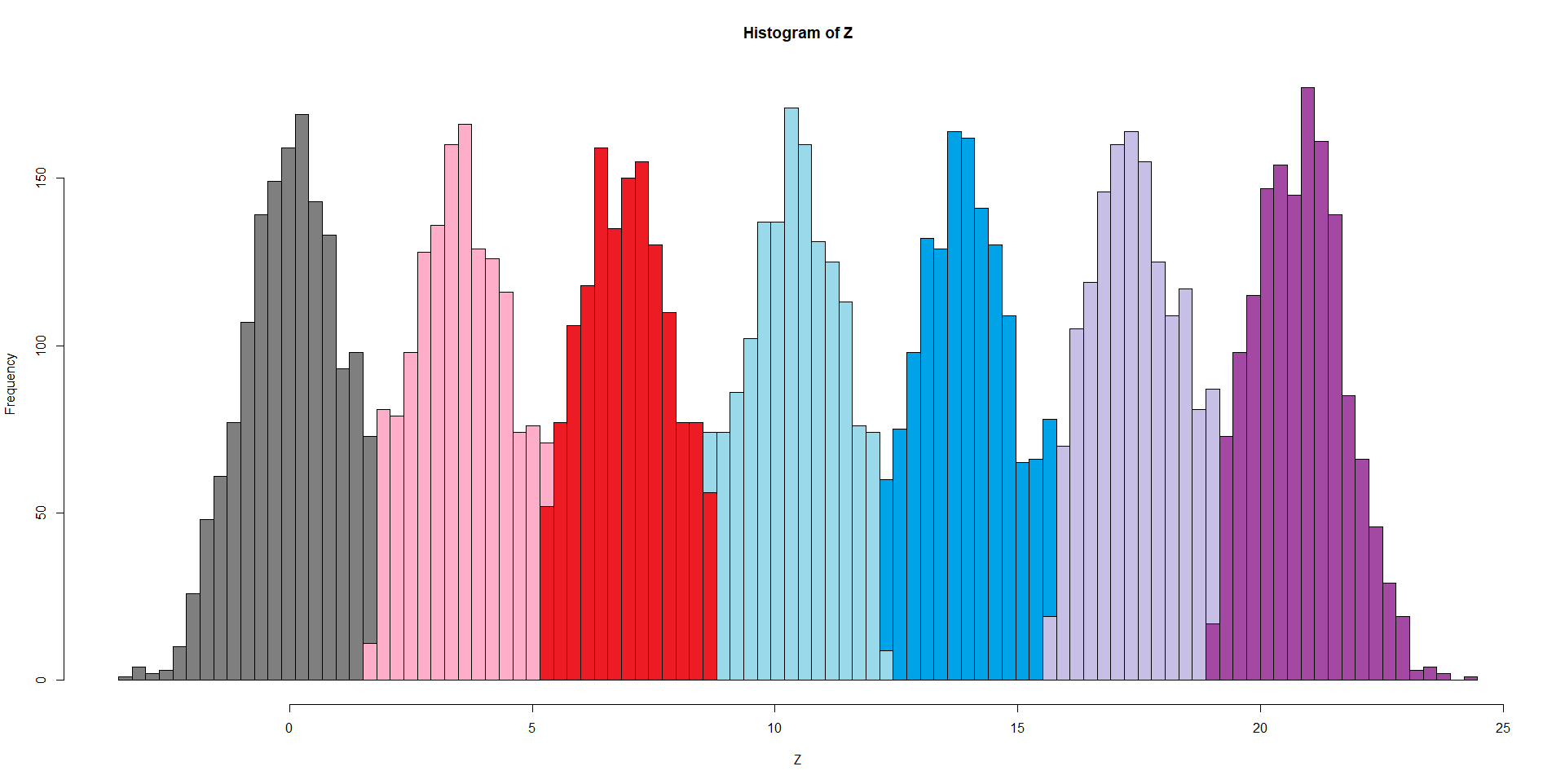} \\
\caption{Histograms of the data regrouped by class using the  exponential family \eqref{entrop:eq:biexp:density} (corresponding to the right column of Table \ref{entrop:table:case:r7}) with, from top to bottom and left to right, $\mu^\star = 0.50*(2 \sqrt{3})$, $\mu^\star =  0.60*(2 \sqrt{3})$,  $\mu^\star =  0.70*(2 \sqrt{3})$ and  $\mu^\star =   2 \sqrt{3} $.}
\label{entrop:fig:case:r7:illustr}
\end{center}
\end{figure}

Once again, we observe on Table \ref{entrop:table:case:r7} and Figure \ref{entrop:fig:case:r7:illustr}  the merging tendency of the classes as the clumps of the distribution $P^\star$  get closer to each other.  

\section{Proof of Theorem \ref{entrop:th:phistar}}
\label{entrop:sec:proofs}
\label{entrop:sec:proof:th:phistar}
We assume that there exists $x_0$ in $\{1,\ldots,r\}$ such that $ P^\star \left(\phi^\star_{x_0} \in ]0,1[ \right)>0$. Using Borel-Cantelli Lemma we can therefore consider an $\alpha$ in $]0,\frac{1}{2}[$ satisfying $P^\star \left(\phi^\star_{x_0} \in [\alpha,1-\alpha] \right)>0$. 
Since $\sum_{x=1}^r\phi^\star_x = 1$, we can also assume the existence of an other $x_1\neq x_0$ such that   
 $ P^\star \left(\phi^\star_{x_0} \in [\alpha,1-\alpha]  \mbox{ and }\phi^\star_{x_1} \in [\alpha,1-\alpha] \right)>0$, even if it means choosing a smaller $\alpha$.  Denote by $A_\alpha$ the set  
$$A_\alpha =A_\alpha(x_0,x_1)= \left\{z\in \mathbb{Z} \ | \ \phi^{\star}_{x_0}(z) \in [\alpha,1-\alpha] \mbox{ and } \phi^{\star}_{x_1}(z) \in [\alpha,1-\alpha] \right\}\;,$$

and let $A$ be any measurable subset of $A_\alpha$ satisfying $P^\star(A)>0$. Let $\theta^\star=\left(\theta^\star_x \right)_{x=1}^r$ be elements of $\Theta^r$ such that 
$$\mathbb{H}_{p^\star}(\phi^\star)= \mathbb{H}_{p^\star}(\phi^\star,\theta^\star) \;.$$

Let $\delta$ be a real number satisfying $0<\delta<\alpha$. Let $\phi$ be such that: For all $x\neq x_0,x_1$, $\phi_{x} = \phi^\star_{x}$, and, for $x\in \{x_0,x_1\}$,
\begin{align*}
\phi_{x_0}(z) = \phi^\star_{x_0}(z) \mbox{ if } z \not\in A \;,&
\phi_{x_0}(z) = \phi^\star_{x_0}(z) + \delta \mbox{ if } z  \in A \;,\\ 
\phi_{x_1}(z) = \phi^\star_{x_1}(z) \mbox{ if } z \not\in A \;,&
\phi_{x_1}(z) = \phi^\star_{x_1}(z) - \delta  \mbox{ if } z\in A \;.\\ 
\end{align*}
By the definition of $A$, $\phi$ belongs necessarily to $\Phi_r$.  We now explicit  $\mathbb{H}_{p^\star}(\phi,\theta^\star)$:
\begin{align*}
\mathbb{H}_{p^\star}(\phi,\theta^\star) = & - \sum_{x=1}^r \int_{\mathbb{Z}} \log \left[ g_{\theta^\star_x}(z) \nu^\phi(x) \right] \phi_x(z) \rm d P^\star(z) \;,\\
=& - \sum_{x\not\in \{x_0,x_1\}} \int_{\mathbb{Z}} \log \left[ g_{\theta^\star_x}(z) \nu^{\phi^\star}(x) \right] \phi^\star_x(z) \rm d P^\star(z) \\
& - \int_{A^c} \log \left[ g_{\theta^\star_{x_0}}(z) \left( \nu^{\phi^\star}(x_0) + \delta P^\star\left(A \right)\right) \right] \phi^\star_{x_0}(z) \rm d P^\star(z)\\
& - \int_{A^c} \log \left[ g_{\theta^\star_{x_1}}(z) \left(\nu^{\phi^\star}(x_1) - \delta P^\star\left(A \right) \right) \right] \phi^\star_{x_1}(z) \rm d P^\star(z)\\
& - \int_{A} \log \left[ g_{\theta^\star_{x_0}}(z)  \left( \nu^{\phi^\star}(x_0) + \delta P^\star\left(A \right)\right)\right] \left(\phi^\star_{x_0}(z) + \delta\right) \rm d P^\star(z)\\
& - \int_{A} \log \left[ g_{\theta^\star_{x_1}}(z) \left(\nu^{\phi^\star}(x_1) - \delta P^\star\left(A \right) \right) \right] \left(\phi^\star_{x_1}(z) - \delta\right) \rm d P^\star(z)\;,
\end{align*}
 Using that, for all $x\neq x_0,x_1$, $ \nu^\phi(x)= \nu^{\phi^\star}(x)$, $ \nu^\phi(x_0)= \nu^{\phi^\star}(x_0) + \delta P^\star\left(A  \right)$ and $ \nu^\phi(x_1)= \nu^{\phi^\star}(x_1) - \delta P^\star\left(A \right)$. Now 
 
 \begin{align*}
\mathbb{H}_{p^\star}(\phi,\theta^\star) -\mathbb{H}_{p^\star}(\phi^\star) =& - \int_{\mathbb{Z}} \log \left(  1 + \frac{\delta P^\star\left(A \right)}{ \nu^{\phi^\star}(x_0) }     \right)  \phi^\star_{x_0}(z) \rm d P^\star(z)\\
& - \int_{\mathbb{Z}} \log  \left(1 - \frac{\delta P^\star\left(A\right)}{ \nu^{\phi^\star}(x_1) } \right)  \phi^\star_{x_1}(z) \rm d P^\star(z)\\
& - \delta\int_{A} \log \left[ g_{\theta^\star_{x_0}}(z)  \left( \nu^{\phi^\star}(x_0) + \delta P^\star\left(A \right)\right)\right]    \rm d P^\star(z)\\
& +\delta \int_{A} \log \left[ g_{\theta^\star_{x_1}}(z) \left(\nu^{\phi^\star}(x_1) - \delta P^\star\left(A\right) \right) \right]   \rm d P^\star(z)\;,
\end{align*}

Using A\ref{entrop:hyp:log:g:integrable} and the dominated convergence theorem, $ \lim_{\delta \to 0} \frac{\mathbb{H}_{p^\star}(\phi,\theta^\star) -\mathbb{H}_{p^\star}(\phi^\star)}{\delta}$ exists and is equal to:
 \begin{align}
 \lim_{\delta \to 0} \frac{\mathbb{H}_{p^\star}(\phi,\theta^\star) -\mathbb{H}_{p^\star}(\phi^\star)}{\delta } =&-  P^\star(A) + P^\star(A) \label{entrop:eq:lim:ratio:phistar}\\
& - \int_{A} \log \left[ g_{\theta^\star_{x_0}}(z)  \left( \nu^{\phi^\star}(x_0) \right)\right] \rm d P^\star(z)\nonumber \\ 
& + \int_{A} \log \left[ g_{\theta^\star_{x_1}}(z) \left(\nu^{\phi^\star}(x_1)  \right) \right]  \rm d P^\star(z)\;.\nonumber
\end{align}

Now, either there exists $\delta>0$ such that $\mathbb{H}_{p^\star}(\phi,\theta^\star) <\mathbb{H}_{p^\star}(\phi^\star)$, proving Theorem \ref{entrop:th:phistar}, or, for all $\delta>0$, $\mathbb{H}_{p^\star}(\phi,\theta^\star) \ge \mathbb{H}_{p^\star}(\phi^\star)$ and the numerator   $\mathbb{H}_{p^\star}(\phi,\theta^\star) -\mathbb{H}_{p^\star}(\phi^\star)$ is non negative while the sign of  the denominator  is the sign of $\delta  $ thus, the limit in \eqref{entrop:eq:lim:ratio:phistar} is necessarily zero and
\begin{equation}
\int_{A} \log \left[ g_{\theta^\star_{x_0}}(z)  \left( \nu^{\phi^\star}(x_0) \right)\right] \mathrm{d} P^\star(z) =  \int_{A} \log \left[ g_{\theta^\star_{x_1}}(z) \left(\nu^{\phi^\star}(x_1)  \right) \right]\mathrm{d} P^\star(z)\;,
\end{equation}
which can be rewritten as:
\begin{equation}
\label{entrop:eq:g1egalg2}
\frac{1}{P^\star(A)}\int_{A } \log \left[\frac{ g_{\theta^\star_{x_0}}(z)}{ g_{\theta^\star_{x_1}}(z)} \right]\mathrm{d} P^\star(z) =   \log\left( \frac{ \nu^{\phi^\star}(x_1)}{ \nu^{\phi^\star}(x_0)} \right) \;.
\end{equation}

\textbf{Case 1 } : If A\ref{entrop:hyp:Z:metric}-\ref{entrop:hyp:theta:coincident} are satisfied:  Then, there exists an open subset $U$ of $A_\alpha$ such that for all $z_0$ in $U$, there exists $\varepsilon_0$ such that, for all $\varepsilon<\varepsilon_0$, $B_d(z_0,\varepsilon) \subset U$, and $P^\star \left( B_d(z_0,\varepsilon)\right)>0$. Applying \eqref{entrop:eq:g1egalg2} to $A=B_d(z_0,\varepsilon) $ gives, for all $\epsilon_0>\epsilon>0$, 

$$\frac{1}{P^\star(B_d(z_0,\varepsilon))}\int_{B_d(z_0,\varepsilon) } \log \left[\frac{ g_{\theta^\star_{x_0}}(z)}{ g_{\theta^\star_{x_1}}(z)} \right]\mathrm{d} P^\star(z) =  \log\left( \frac{ \nu^{\phi^\star}(x_1)}{ \nu^{\phi^\star}(x_0)} \right) \;.$$
In this case, $z \mapsto \log \left[\frac{ g_{\theta^\star_{x_0}}(z)}{ g_{\theta^\star_{x_1}}(z)} \right]$ is  continuous at $z_0$ and 
\begin{align*}
\log \left[\frac{ g_{\theta^\star_{x_0}}(z_0)}{ g_{\theta^\star_{x_1}}(z_0)} \right] &= \lim_{\epsilon\to 0}\frac{1}{P^\star(B_d(z_0,\varepsilon))}\int_{B_d(z_0,\varepsilon) } \log \left[\frac{ g_{\theta^\star_{x_0}}(z)}{ g_{\theta^\star_{x_1}}(z)} \right]\mathrm{d} P^\star(z) \\
&=   \log\left( \frac{ \nu^{\phi^\star}(x_1)}{ \nu^{\phi^\star}(x_0)} \right) \;.
\end{align*} 
Therefore, for all $z$ in $U$
 
\begin{equation*}
g_{\theta^\star_{x_0}}(z)     =  \frac{ \nu^{\phi^\star}(x_0)}{ \nu^{\phi^\star}(x_1)}   g_{\theta^\star_{x_1}}(z)  \;,
\end{equation*} 
and, using A\ref{entrop:hyp:theta:coincident}, necessarily,  $ g_{\theta^\star_{x_0}}= g_{\theta^\star_{x_1}}$ and $ \nu^{\phi^\star}(x_0) =  \nu^{\phi^\star}(x_1)$. 
Now defining, for all $x\neq x_0,x_1$, $\phi_x = \phi^\star_x$, for $x=x_0$, $\phi_{x_0} = \phi^\star_{x_0} + \phi^\star_{x_1}$  and, for $x=x_1$, $\phi_{x_1} = 0$,  then, for all $x\neq x_0,x_1$, $\nu^\phi(x) =  \nu^{\phi^\star}(x)$, $\nu^\phi(x_0) =  \nu^{\phi^\star}(x_0) +\nu^{\phi^\star}(x_1) $ and $\nu^\phi(x_1)=0$
\begin{align}
\mathbb{H}_{p^\star}(\phi,\theta^\star) -\mathbb{H}_{p^\star}(\phi^\star) =  & - \int_{\mathbb{Z}} \log \left[ g_{\theta^\star_{x_0}}(z) \left(  \nu^{\phi^\star}(x_0) +\nu^{\phi^\star}(x_1)  \right)\right] \left(\phi^\star_{x_0}(z) + \phi^\star_{x_1}(z)\right) \rm d P^\star(z) \nonumber \\
&+ \int_{\mathbb{Z}} \log \left[ g_{\theta^\star_{x_0}}(z) \left(  \nu^{\phi^\star}(x_0)  \right)\right] \left(\phi^\star_{x_0}(z) \right) \rm d P^\star(z) \nonumber \\
&+  \int_{\mathbb{Z}} \log \left[ g_{\theta^\star_{x_0}}(z) \left( \nu^{\phi^\star}(x_1)  \right)\right] \left( \phi^\star_{x_1}(z)\right) \rm d P^\star(z) \nonumber \\
 =  & -   \left(  \nu^{\phi^\star}(x_0) +\nu^{\phi^\star}(x_1)  \right)   \log  \left(  \nu^{\phi^\star}(x_0) +\nu^{\phi^\star}(x_1)  \right)  \label{entrop:eq:diffH:case1} \\ 
&+ \left(  \nu^{\phi^\star}(x_0)  \right)    \log  \left(  \nu^{\phi^\star}(x_0)  \right)  \nonumber \\
&+  \left( \nu^{\phi^\star}(x_1)  \right)   \log    \left( \nu^{\phi^\star}(x_1)  \right)\;. \nonumber  
\end{align}
$\nu^{\phi^\star}(x_0)$ and $\nu^{\phi^\star}(x_1)$ being positive, then the right hand side in \eqref{entrop:eq:diffH:case1} is negative and thus
$$\mathbb{H}_{p^\star}(\phi,\theta^\star) -\mathbb{H}_{p^\star}(\phi^\star) <0\;,$$ 
concluding the proof in the  \textbf{case 1}.

\textbf{Case 2 } : If A\ref{entrop:hyp:ident:discret} is satisfied, then $P^\star = \sum_{i=0}^N p^\star(z_i) \delta_{z_i}$ where $1\leq N\leq \infty$, for all $i$ $z_i\in \mathbb{Z}$ and $p^\star(z_i)>0$ with  $\sum_{i=0}^N p^\star(z_i)=1$. If we intend to use the same scheme of proof as in \textbf{Case 1 }, the open balls $B(z_0,\varepsilon)$ are made of the single element $\{z_0\}$ for $\varepsilon$ small enough. The equality, up to a constant, between $g_{\theta^\star_{x_0}}(z)$ and $g_{\theta^\star_{x_0}}(z)$ does not necessarily hold for an infinite amount of $z$'s in this case. However,  arguments adapted to the discrete case paired with Equation \eqref{entrop:eq:g1egalg2} achieve the same result which is  the construction of a $\phi$ with lower entropy than $\phi^\star$. 

Denote by $A_0$ the set of all $z_0$ in $ \{z_i\}_{i=1}^N$ such that there exists $x$ in $\{1,\ldots,r\}$ satisfying $0<\phi^\star_x(z_0) <1$. We assumed at the beginning of the proof  that $A_0$ is non empty. For all $z_0$ in $A_0$ choose arbitrarily one $x_{0}$ such that $0<\phi^\star_{x_0}(z_0) <1$. Therefore $x_0$ depends on $z_0$ in $A_0$ that is considered.  For all $x_1$ satisfying $0<\phi^\star_{x_1}(z_0) <1$,  one can embed $A=\{z_0\}$ in $A_\alpha(x_0,x_1)$. Moreover  $P^\star(A)=p^\star(z_0) >0$, we can therefore apply \eqref{entrop:eq:g1egalg2} to $A=\{z_0\}$ which gives:
$$\frac{1}{p^\star(z_0)} \log\left(  \frac{  g_{   \theta^\star_{x_{0}}   }(z_0)  }{ g_{   \theta^\star_{x_1}  }(z_0)  } \right)p^\star(z_0) = \log \left( \frac{\nu^{\phi^\star}(x_1)}{\nu^{\phi^\star}(x_0)}\right)\;,$$
and then
\begin{equation}
\label{entrop:eq:g1egalg2:discret}
 g_{   \theta^\star_{x_1}  } (z_0)  \nu^{\phi^\star}(x_1) = g_{   \theta^\star_{x_{z_0}}   }(z_0) \nu^{\phi^\star}(x_0)   \;,
\end{equation}

Below, we use the notation $A_0^c$ to designate the (possibly empty) set of all $z$ in $\{z_i\}_{i=1}^N$ that do not belong to $A_0$.  Define $\phi_x(z)$ for all $x$ in $\{1,\ldots,r\}$ and $z$ in $\{z_i\}_{i=1}^N$:
\begin{itemize}
\item[$\cdot$] For all $z\in A_0^c$, $\phi_x(z)=\phi^\star_x(z)$
\item[$\cdot$] For all  $z_0\in A_0$,
\begin{itemize}
\item[$\cdot$] if $x$ is such that $\phi^\star_x(z_0)\in \{0,1\}$, $\phi_x(z_0)=\phi^\star_x(z_0)$,
\item[$\cdot$] if $x$ is such that $\phi^\star_x(z_0)\in ]0,1[$ and if $x\neq x_0$, $\phi_x(z_0)=0$,
\item[$\cdot$] $\phi_{x_0}(z_0)=\sum_{x |\phi^\star_x(z_0)\in ]0,1[ }\phi^\star_x(z_0)$.
\end{itemize}
\end{itemize}

Then using relation \eqref{entrop:eq:g1egalg2:discret} we can show that 
\begin{align*}
 \mathbb{H}_{P^\star} \left( \phi^\star\right) =& - \Bigg[ \sum_{x=1}^r \sum_{z \in \{z_i\}} \log\left( g_{\theta^\star_x}(z) \nu^{\phi^\star}(x)\right) \phi^\star_x(z) p^\star(z) \Bigg]\\ 
=& - \Bigg[ \sum_{x=1}^r \sum_{z \in \{z_i\}} \log\left( g_{\theta^\star_x}(z) \nu^{\phi^\star}(x)\right) \phi_x(z) p^\star(z) \Bigg]\;.\\
\end{align*}

Moreover 
\begin{align*}
 \mathbb{H}_{P^\star} \left( \phi ,\theta^\star\right)  =& - \Bigg[ \sum_{x=1}^r \sum_{z \in \{z_i\}} \log\left( g_{\theta^\star_x}(z) \nu^{\phi}(x)\right) \phi_x(z) p^\star(z) \Bigg]\;,\\
\end{align*}
where $\nu^{\phi}(x) =  \sum_{z \in \{z_i\}}\phi_x(z) p^\star(z) $. Thus 

\begin{align*}
 \mathbb{H}_{P^\star} \left( \phi ,\theta^\star\right) -\mathbb{H}_{P^\star} \left( \phi^\star\right) =& - KL\left(\nu^{\phi}||\nu^{\phi^\star} \right)\;.\\
\end{align*}

$A_0$ being non empty, necessarily $\nu^{\phi}\neq\nu^{\phi^\star}$  and $- KL\left(\nu^{\phi}||\nu^{\phi^\star} \right)<0$, concluding the proof in \textbf{Case 2}.

\bibliographystyle{unsrtnat} 
\bibliography{bibtex.bib}

\begin{thebibliography}{19}
\providecommand{\natexlab}[1]{#1}
\providecommand{\url}[1]{\texttt{#1}}
\expandafter\ifx\csname urlstyle\endcsname\relax
  \providecommand{\doi}[1]{doi: #1}\else
  \providecommand{\doi}{doi: \begingroup \urlstyle{rm}\Url}\fi

\bibitem[Bryant(1991)]{bryant:1991}
Peter Bryant.
\newblock Large-sample results for optimization-based clustering methods.
\newblock \emph{Journal of Classification}, 8\penalty0 (1):\penalty0 31--44,
  1991.
\newblock URL
  \url{https://EconPapers.repec.org/RePEc:spr:jclass:v:8:y:1991:i:1:p:31-44}.

\bibitem[Celeux and Govaert(1992)]{celeux:1992}
Gilles Celeux and G{\'e}rard Govaert.
\newblock A classification em algorithm for clustering and two stochastic
  versions.
\newblock \emph{Computational statistics \& Data analysis}, 14\penalty0
  (3):\penalty0 315--332, 1992.

\bibitem[Biernacki et~al.(2000)Biernacki, Celeux, and Govaert]{biernacki:2000}
Christophe Biernacki, Gilles Celeux, and G\'{e}rard Govaert.
\newblock Assessing a mixture model for clustering with the integrated
  completed likelihood.
\newblock \emph{IEEE Trans. Pattern Anal. Mach. Intell.}, 22\penalty0
  (7):\penalty0 719–725, jul 2000.
\newblock ISSN 0162-8828.
\newblock \doi{10.1109/34.865189}.
\newblock URL \url{https://doi.org/10.1109/34.865189}.

\bibitem[Baudry et~al.(2012)Baudry, Maugis, and Michel]{baudry:2012}
Jean-Patrick Baudry, Cathy Maugis, and Bertrand Michel.
\newblock Slope heuristics: overview and implementation.
\newblock \emph{Statistics and Computing}, 22\penalty0 (2):\penalty0 455--470,
  2012.

\bibitem[Celisse et~al.(2012)Celisse, Daudin, and Pierre]{celisse:2012}
Alain Celisse, Jean-Jacques Daudin, and Laurent Pierre.
\newblock {Consistency of maximum-likelihood and variational estimators in the
  stochastic block model}.
\newblock \emph{Electronic Journal of Statistics}, 6\penalty0 (none):\penalty0
  1847 -- 1899, 2012.
\newblock \doi{10.1214/12-EJS729}.
\newblock URL \url{https://doi.org/10.1214/12-EJS729}.

\bibitem[Quost and Denoeux(2016)]{quost:2016}
Benjamin Quost and Thierry Denoeux.
\newblock Clustering and classification of fuzzy data using the fuzzy em
  algorithm.
\newblock \emph{Fuzzy Sets and Systems}, 286:\penalty0 134--156, 2016.

\bibitem[Spurek et~al.(2017)Spurek, Tabor, and Byrski]{spurek:2017}
P.~Spurek, J.~Tabor, and K.~Byrski.
\newblock Active function cross-entropy clustering.
\newblock \emph{Expert Systems with Applications}, 72:\penalty0 49--66, 2017.
\newblock ISSN 0957-4174.
\newblock \doi{https://doi.org/10.1016/j.eswa.2016.12.011}.

\bibitem[Dempster et~al.(1977)Dempster, Laird, and
  Rubin]{dempster:laird:rubin:1977}
A.~P. Dempster, N.~M. Laird, and D.~B. Rubin.
\newblock Maximum likelihood from incomplete data via the {EM} algorithm.
\newblock \emph{J. Roy. Statist. Soc. B}, 39\penalty0 (1):\penalty0 1--38 (with
  discussion), 1977.

\bibitem[Baum et~al.(1970)Baum, Petrie, Soules, and Weiss]{baum:1970}
Leonard~E Baum, Ted Petrie, George Soules, and Norman Weiss.
\newblock A maximization technique occurring in the statistical analysis of
  probabilistic functions of markov chains.
\newblock \emph{The annals of mathematical statistics}, 41\penalty0
  (1):\penalty0 164--171, 1970.

\bibitem[Akaike(1973)]{akaike:1973}
H~Akaike.
\newblock Information theory and an extension of the maximum likelihood
  principle.
\newblock In \emph{2nd International Symposium on Information Theory}, pages
  267--281. Akad{\'e}miai Kiad{\'o} Location Budapest, Hungary, 1973.

\bibitem[Mallows(1973)]{mallows:1973}
C.~L. Mallows.
\newblock Some comments on cp.
\newblock \emph{Technometrics}, 15\penalty0 (4):\penalty0 661--675, 1973.
\newblock ISSN 00401706.
\newblock URL \url{http://www.jstor.org/stable/1267380}.

\bibitem[Massart(2007)]{massart:2007}
Pascal Massart.
\newblock \emph{Concentration inequalities and model selection}, volume~6.
\newblock Springer, 2007.

\bibitem[Biernacki and Govaert(1997)]{biernacki:1997}
Christophe Biernacki and G\'{e}rard Govaert.
\newblock Using the classification likelihood to choose the number of clusters.
\newblock \emph{Computing Science and Statistics}, 29\penalty0 (2):\penalty0
  451--457, 1997.

\bibitem[Shannon(1948)]{shannon:1948}
Claude~Elwood Shannon.
\newblock A mathematical theory of communication.
\newblock \emph{The Bell system technical journal}, 27\penalty0 (3):\penalty0
  379--423, 1948.

\bibitem[Gassiat(2018)]{gassiat:2018}
{\'E}lisabeth Gassiat.
\newblock \emph{Universal Coding and Order Identification by Model Selection
  Methods}.
\newblock Springer, 2018.
\newblock ISBN 9783319962627.

\bibitem[Dumont(2022)]{dumont:supp:2022}
Thierry Dumont.
\newblock Supplement paper to "adaptive clustering by minimization of the
  mixing entropy criterion".
\newblock \emph{Prepublication}, 2022.

\bibitem[Kullback(1997)]{kullback:1997}
Solomon Kullback.
\newblock \emph{Information theory and statistics}.
\newblock Courier Corporation, 1997.

\bibitem[Rudin(1991)]{rudin:1991}
W.~Rudin.
\newblock \emph{Functional Analysis}.
\newblock International series in pure and applied mathematics. McGraw-Hill,
  1991.
\newblock ISBN 9780070542365.

\bibitem[Capp\'{e} et~al.(2005)Capp\'{e}, Moulines, and
  Ryd\'{e}n]{cappe:moulines:ryden:2005}
O.~Capp\'{e}, E.~Moulines, and T.~Ryd\'{e}n.
\newblock \emph{Inference in Hidden {M}arkov Models}.
\newblock Springer, 2005.

\end{thebibliography}


\begin{thebibliography}{1}
\providecommand{\natexlab}[1]{#1}
\providecommand{\url}[1]{\texttt{#1}}
\expandafter\ifx\csname urlstyle\endcsname\relax
  \providecommand{\doi}[1]{doi: #1}\else
  \providecommand{\doi}{doi: \begingroup \urlstyle{rm}\Url}\fi

\bibitem[Dumont(2022)]{dumont:2022}
Thierry Dumont.
\newblock Adaptive clustering by minimization of the mixing entropy criterion.
\newblock \emph{Prepublication}, 2022.

\end{thebibliography}

\end{document}